\newtheorem{thm}{Theorem}[section]
\newtheorem{cor}[thm]{Corollary}
\newtheorem{lem}[thm]{Lemma}
\newtheorem{prop}[thm]{Proposition}
\newtheorem{rem}[thm]{Remark}
\numberwithin{equation}{section}
\newcommand{\Rset}{\mathbb{R}}
\newcommand{\Nset}{\operatorname{I \! N}}
\newcommand{\Zset}{\mathbb{Z}}
\newcommand{\eps}{\varepsilon}
\newcommand{\E}{\operatorname{I \! E}}
\renewcommand{\P}{\operatorname{I \! P}}
\renewcommand{\theta}{\vartheta}
\newcommand{\Z}{Z^{c_0,c_1}}
\newcommand{\N}{N^{c_0,c_1}}
\newcommand{\z}{Z^{c_1,c_0}} 
\newcommand{\n}{N^{c_1,c_0}}
\newcommand{\NN}{N^{c'_0,c'_1}}
\newcommand{\zz}{Z^{c'_1,c'_0}}
\newcommand{\nn}{N^{c'_1,c'_0}}
\title{From persistent random walk to the telegraph noise}
\author{Samuel Herrmann and Pierre Vallois\\
  Institut de Math\'ematiques Elie Cartan - UMR 7502\\
  Nancy-Universit\'e, CNRS, INRIA\\
  B.P. 239, 54506 Vandoeuvre-l\`es-Nancy Cedex, France\\
  \{herrmann,vallois\}@iecn.u-nancy.fr
}
\title{From persistent random walks to the telegraph noise}
\begin{document}

\maketitle

\abstract{\small
We study a family of memory-based persistent random walks and we prove weak 
convergences after space-time rescaling. The limit processes are not only 
Brownian motions with drift. We have obtained a continuous but non-Markov 
process $(Z_t)$ which can be easely expressed in terms of a counting process 
$(N_t)$. In a particular case the counting process is a Poisson process, and 
$(Z_t)$ permits to represent the solution of the telegraph equation. We study 
in detail the Markov process $((Z_t,N_t); \ t\ge 0)$.
}
\section{The setting of persistent random walks.}
\label{section_not}
    1) The simplest way to present and define a persistent random walk
    with value in $\Zset$ is to introduce the process of its increments
    $(Y_t, \ t\in\Nset)$. In the classical symmetric random walk case,
    this process is just a sequence of independent random variables
    satisfying $\P(Y_t=1)=\P(Y_t=-1)=\frac{1}{2}$ for any $t\ge 0$.
    Here we shall introduce some short range memory in these increments
    in order to create the persistence phenomenon. Namely $(Y_t)$ is a
    $\{-1,1\}$-valued Markov chain: the law of $Y_{t+1}$ given 
    $\mathcal{F}_t=\sigma(Y_0, Y_1,\ldots, Y_t)$ depends only
    on the value of $Y_t$. This dependence is represented by the
    transition probability $\pi(x,y)=\P(Y_{t+1}=y\vert Y_{t}=x)$ with
    $(x,y)\in\{-1,1\}^2$:
\begin{eqnarray*}
        \pi=\left(\begin{array}{cc} 1-\alpha & \alpha\\
        \beta & 1-\beta\end{array}\right)\quad\quad 0<\alpha<1,\quad
        0<\beta<1.
\end{eqnarray*}
The persistent random walk is the corresponding process of partial sums:
\begin{equation}
\label{25*1} X_t=\sum_{i=0}^t Y_i\quad\mbox{with }\quad X_0=Y_0=1\
\mbox{or }\ -1.
\end{equation}
    Let us discuss two particular cases:
\begin{itemize}
    \item If $\alpha+\beta=1$, then increments  are independent and therefore 
    the short range memory disappears. $(X_t,\ t\in\Nset)$ is a classical Bernoulli 
    random walk. 
    \item The symmetric case $\alpha=\beta$ was
    historically suggested by F\"{u}rth \cite{furthbook20} and precisely
    defined by Taylor \cite{taylor}. Goldstein \cite{goldstein50}
    developed the calculation of the random walk law and clarified the
    link between this process and the so-called telegraph equation.
    Some nice presentation of these results can be found in Weiss'
    book \cite{weissbook94} and  \cite{weiss02}. This particular short
    memory process is often called either \emph{persistent or correlated
    random walk} or \emph{Kac walks} (see, for instance, \cite{eckstein00}). 
    An interesting presentation of different limiting distributions for this 
    correlated random walk has been given by Renshaw and Henderson \cite{renshaw81}.
\end{itemize}
    2) Recently, Vallois and Tapiero \cite{vallois07} studied the influence of 
    the persistence phenomenon on the first and second moments of a counting process
    whose increments takes their values in $\{0,1\}$ instead of $\{-1,1\}$. 
    They obtained some nearly linear behaviour for the expectation. 
    Using the transformation $y\to 2y-1$, it is easy to deduce that, 
    in our setting, we have:

\begin{equation}
\label{11b}
\E_{-1}[X_t]:=\E[X_t\vert
X_0=Y_0=-1]=\frac{\alpha-\beta}{1-\rho}\,(t+1)-\frac{2\alpha}{(1-\rho)^2}\,
(1-\rho^{t+1}).
\end{equation}
\begin{equation}
\label{11c}
\E_{+1}[X_t]:=\E[X_t\vert
X_0=Y_0=+1]=\frac{\alpha-\beta}{1-\rho}\,(t+1)-\frac{2\beta}{(1-\rho)^2}\,
(1-\rho^{t+1}).
\end{equation}
An application to insurance has been given in \cite{vallois08}.\\
It is actually possible to determine the moment generating function 
(see Proposition \ref{a+a-1} in Section \ref{section preuve2}).
\[
\Phi(\lambda,t)=\E[\lambda^{X_t}],\quad(\lambda\in\Rset_+^*).
\]
However it seems difficult to invert this transformation; i.e. to give 
the law of $X_t$.\\
3) This leads us to investigate limit distributions. It is well-known 
that the correctly normalized symmetric random walk converges towards 
the Brownian motion. Let us define the time and space normalizations.
Let $\alpha_0$ and $\beta_0$ denote two real numbers satisfying:
\begin{equation}
\label{*1**}
0\le\alpha_0\le 1,\quad 0\le\beta_0\le 1.
\end{equation}
Let $\Delta_x$ be a positive small parameter so that:
\begin{equation}
\label{*2**}
0\le \alpha_0+c_0\Delta_x\le 1,\quad 0\le \beta_0+c_1\Delta_x\le 1,
\end{equation}
where $c_0$ and $c_1$ belong to $\Rset$ (see in subsection \ref{***} 
the allowed range of parameters).\\
Let $(Y_t,\, t\in\Nset)$ be a Markov chain whose transition probabilities 
are given by the matrix:
\begin{eqnarray}
\label{1-4B}
\pi^\Delta=\left(\begin{array}{cc}
1-\alpha_0-c_0\Delta_x & \alpha_0+c_0\Delta_x\\
\beta_0+c_1\Delta_x & 1-\beta_0-c_1\Delta_x
      \end{array}\right).
\end{eqnarray}
Let $(X_t,\ t\in\Nset)$ be the random walk associated with $(Y_t)$ 
(cf. \eqref{25*1}). 
Define the normalized random walk $(Z^\Delta_s, \, s\in\Delta_t\Nset)$ 
by the relation:
\begin{equation}\label{scal}
 Z^\Delta_s=\Delta_x X_{s/\Delta_t}, \quad (\Delta_t>0, \ \Delta_x>0).
\end{equation}
 Set $(\tilde{Z}^\Delta_s,\, s\ge 0)$ the continuous time process obtained 
 by linear interpolation of $(Z_s^\Delta)$.\\ We introduce two essential parameters:
\begin{equation}
\label{double1}
\rho_0=1-\alpha_0-\beta_0\quad\mbox{(the asymmetry coefficient),}
\end{equation}
\begin{equation}
\label{double2}
\eta_0=\beta_0-\alpha_0.
\end{equation}
In this paper, we will aim at showing the existence of a normalization 
(i.e. to express $\Delta_t$ in terms of $\Delta_x$) which depends on $\alpha_0$, 
$\beta_0$, so that $(\tilde{Z}^\Delta_s)$ converges in distribution, 
as $\Delta_x\to 0$.\\
Our main results and the organization of the paper will be given in Section 
\ref{results}.
\section{The main results}\label{results}
\mathversion{bold}
\subsection{Case : $\rho_0=1$}
\label{section2.1}
\mathversion{normal}
Obviously $\rho_0=1$ implies that $\alpha_0=\beta_0=0$, and the transition 
probabilities matrix is given as
\begin{eqnarray*}
\pi^\Delta=\left(\begin{array}{cc}
1-c_0\Delta_x & c_0\Delta_x\\
c_1\Delta_x & 1-c_1\Delta_x
      \end{array}\right)\quad (c_0,c_1>0).
\end{eqnarray*}
In order to describe the limiting process, we introduce a sequence of 
independent identically exponentially distributed random variables $(e_n,n\ge 1)$ 
with $\E[e_n]=1$. We construct the following counting process:
\begin{equation}
\label{**1.1}
N_t^{c_0,c_1}=\sum_{k\ge 1}1_{\{\lambda_1 e_1+\lambda_2 e_2+\ldots+\lambda_k e_k\le t\}},
\end{equation}
where 
\begin{eqnarray}
\label{.*.}
\lambda_k=\left\{\begin{array}{l}
1/c_0\quad \mbox{if}\ k\ \mbox{is odd}\\
1/c_1\quad \mbox{otherwise.}
\end{array}\right.
\end{eqnarray}
Finally we define
\begin{equation}
\label{def_1.2}
\Z_t=\int_0^t (-1)^{\N_u}du.
\end{equation}
For simplicity of notations, in the symmetric case (i.e. $c_0=c_1$), $N^{c_0}_t$ 
(resp. $Z^{c_0}_t$) will stand for $N^{c_0,c_0}_t$ (resp. $Z^{c_0,c_0}_t$). The 
process $(Z_t^{c_0})$ has been introduced by Stroock (in \cite{stroock} p. 37). 
It is possible to show that if we rescale $(Z_t^{c_0})$, this process converges 
in distribution to the standard Brownian motion. This property has been widely 
generalized. For instance Bardina and Jolis \cite{bardina00} have given weak 
approximation of the Brownian sheet from a Poisson process in the plane.
\begin{thm}
\label{cas1} Let $\Delta_x=\Delta_t$ and $Y_0=X_0=-1$. Then the
interpolated persistent random walk $(\tilde{Z}^\Delta_s,\ s\ge
0)$ converges in distribution, as $\Delta_x\to 0$, to the process $(-\Z_s,\ s\ge 0)$.\\
In particular if $c_0=c_1$, then $(N_u^{c_0})$ is the Poisson process with 
parameter $c_0$.\\
If $Y_0=X_0=1$ then the
interpolated persistent random walk $(\tilde{Z}^\Delta_s,\ s\ge
0)$ converges in distribution, as $\Delta_x\to 0$, to the process $(\z_s,\ s\ge 0)$.
\end{thm}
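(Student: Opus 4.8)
The plan is to describe both the rescaled walk and the limit as explicit piecewise affine paths and to match their breakpoints by a coupling. Since $\rho_0=1$ forces $\alpha_0=\beta_0=0$, the matrix $\pi^\Delta$ has only the two off-diagonal entries $c_0\Delta_x$ and $c_1\Delta_x$, so starting from $Y_0=-1$ the chain $(Y_t)$ is, by the strong Markov property, of alternating-renewal type: it remains at $-1$ for $G_1$ steps, then switches (necessarily to $+1$) and stays there $G_2$ steps, then returns to $-1$ for $G_3$ steps, and so on, where the $G_k$ are independent with $G_k\sim\mathrm{Geom}(c_0\Delta_x)$ for $k$ odd and $G_k\sim\mathrm{Geom}(c_1\Delta_x)$ for $k$ even. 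Hence $X_t=\sum_{i\le t}Y_i$ has increment $-1$ on the first block, $+1$ on the second, $-1$ on the third, etc.; after the rescaling \eqref{scal} with $\Delta_t=\Delta_x$ and linear interpolation, $(\tilde Z^\Delta_s)$ is exactly the piecewise affine path issued from $-\Delta_x$ whose successive slopes are $-1,+1,-1,\dots$ and whose $k$-th change of slope occurs at time $\tau^\Delta_k:=\Delta_x(G_1+\cdots+G_k)$ (up to an index shift of at most $\Delta_x$ coming from the interpolation). On the other hand, by \eqref{**1.1} and \eqref{def_1.2}, $\N_u=0$ for $u<\sigma_1$ and $\N_u=k$ for $\sigma_k\le u<\sigma_{k+1}$ with $\sigma_k:=\lambda_1e_1+\cdots+\lambda_ke_k$, so $-\Z_s=-\int_0^s(-1)^{\N_u}\,du$ is precisely the piecewise affine path issued from $0$ with successive slopes $-1,+1,-1,\dots$ and $k$-th change of slope at $\sigma_k$.

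It therefore suffices to prove that the breakpoint sequences converge. First I would record the elementary coupling: if $U$ is uniform on $(0,1)$ then $G:=\lceil\log U/\log(1-p)\rceil\sim\mathrm{Geom}(p)$, and since $p/(-\log(1-p))\to1$ as $p\to0$ we get $pG\to-\log U$ almost surely. Realising all the $G_k$ (for every $\Delta_x$) on one probability space carrying an i.i.d. sequence $(e_k)$ of unit exponentials, with the odd (resp. even) indices built from the parameter $c_0\Delta_x$ (resp. $c_1\Delta_x$) and $e_k:=-\log U_k$, one obtains $\Delta_xG_k\to\lambda_ke_k$ a.s. for each $k$, hence $\tau^\Delta_k\to\sigma_k$ a.s. for each $k$. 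Because $(\sigma_k)$ is a renewal sequence with positive mean increments, $\sigma_k\to\infty$ a.s., so on a fixed compact $[0,T]$ only finitely many $\sigma_k$ are $\le T$ and, for $\Delta_x$ small, the same finitely many indices satisfy $\tau^\Delta_k\le T$; comparing two piecewise affine functions having the same slope pattern and matching, nearby breakpoints, and recalling that the slopes are $\pm1$, yields $\sup_{s\le T}|\tilde Z^\Delta_s+\Z_s|\le 2\sum_{\sigma_k\le T}|\tau^\Delta_k-\sigma_k|+O(\Delta_x)\to0$ a.s. Thus $\tilde Z^\Delta\to-\Z$ a.s. for uniform convergence on compacts, which gives the asserted convergence in distribution.

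If $c_0=c_1=:c$, then all $\lambda_k$ equal $1/c$, so the holding times of $\N$ in \eqref{**1.1} are i.i.d. $\mathrm{Exp}(c)$ and $\N$ is the Poisson process of intensity $c$. The case $Y_0=X_0=+1$ is treated by the same argument after exchanging the two states: the chain now stays $\mathrm{Geom}(c_1\Delta_x)$ steps at $+1$, then $\mathrm{Geom}(c_0\Delta_x)$ steps at $-1$, and so on, so the rescaled breakpoints converge to $\lambda'_1e_1+\cdots+\lambda'_ke_k$ with $\lambda'_k=1/c_1$ for $k$ odd and $\lambda'_k=1/c_0$ for $k$ even, i.e. to the jump times of $\n$; moreover $X$ now increases on the first block, so $\tilde Z^\Delta$ has slope pattern $+1,-1,+1,\dots$, which is exactly that of $\z_s=\int_0^s(-1)^{\n_u}\,du$. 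Hence $\tilde Z^\Delta\to\z$ in distribution.

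The only genuinely delicate point is bookkeeping: coupling the geometric holding times simultaneously for all vanishing $\Delta_x$ while keeping their independence, and checking that the map sending a breakpoint sequence to the corresponding piecewise affine path in $C([0,\infty))$ is continuous at the random limiting configuration (this is where $\sigma_k\to\infty$ and $\sigma_k\ne T$ a.s. enter). Everything else is the classical geometric-to-exponential limit together with an elementary estimate between broken lines. An alternative that avoids the explicit coupling is to prove convergence of the finite-dimensional distributions of the breakpoints via $\P(\Delta_xG>x)=(1-c_i\Delta_x)^{\lfloor x/\Delta_x\rfloor}\to e^{-c_ix}$ and then tightness of $(\tilde Z^\Delta)$ in $C([0,\infty))$, which is immediate from the uniform Lipschitz bound $|\tilde Z^\Delta_s-\tilde Z^\Delta_t|\le|s-t|$; the coupling route is, however, the shortest.
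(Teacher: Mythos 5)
Your proof is correct, and while it starts from the same structural observation as the paper, it completes the argument by a genuinely different route. The common core is the decomposition of the increment chain (under $\rho_0=1$) into independent geometric sojourn blocks, alternately $\mathrm{Geom}(c_0\Delta_x)$ and $\mathrm{Geom}(c_1\Delta_x)$, whose rescaled lengths converge to the exponential interarrival times $\lambda_k e_k$ of $\N$; this is exactly the paper's sequence $(A_k)$ and its Lemma \ref{convloi}, and the representation of $\tilde Z^\Delta$ as a $\pm 1$-sloped broken line with breakpoints $\Delta_t T_k$ (up to an $O(\Delta_x)$ shift) is the paper's identity \eqref{*7}. The divergence is in how block convergence is upgraded to path convergence. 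The paper stays at the level of weak convergence: it proves convergence of the finite-dimensional distributions of $\tilde Z^\Delta$ (Lemmas \ref{lem:convmarg}--\ref{lem:convmarggen} and Propositions \ref{convmarg}--\ref{convmarg_gen}, which need a mapping-theorem argument to handle the indicators $1_{\{N^\Delta_s=k\}}$ and the domination bound $\P(N^\Delta_s=k)\le e^s r^k$ to interchange limit and sum), and then verifies tightness from the $1$-Lipschitz bound. You instead realise all the geometric variables from one i.i.d. uniform sequence via the quantile coupling $G=\lceil\log U/\log(1-p)\rceil$, get almost sure convergence of the rescaled breakpoints, and deduce almost sure locally uniform convergence of the broken lines, hence convergence in law; the only delicate point --- continuity of the map from breakpoint configurations to paths at the limiting configuration --- you handle correctly through $\sigma_k\to\infty$, $\P(\sigma_k=T)=0$, and the elementary estimate $\sup_{s\le T}\vert f-g\vert\le\vert f(0)-g(0)\vert+2\sum_k\vert\tau_k-\sigma_k\vert$ for two broken lines with slopes $\pm1$ and the same slope pattern. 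Your route is shorter and dispenses with both the fdd computation and the tightness check; the paper's heavier fdd machinery pays off later, since the same lemmas are reused for the extensions of Section \ref{extensions}, where an explicit coupling would be clumsier. Your treatment of the symmetric case ($\N$ Poisson of rate $c_0$ when all $\lambda_k=1/c_0$) and of the initial condition $Y_0=+1$ (roles of $c_0,c_1$ exchanged, slope pattern $+1,-1,\dots$, limit $\z$) agrees with the statement.
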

\begin{proof} See Section \ref{sectionpreuve1}.\end{proof}
Next, in Section \ref{properties_*}, we investigate the process $(\Z_t,\N_t;\ t\ge 0)$. 
In particular we prove that it is Markov, we determine its semigroup and the law 
of $(\Z_t,\N_t)$, $t$ being fixed. This permits to prove, when $c_0=c_1$, the 
well-known relation (cf. \cite{weiss02}, \cite{eckstein00}, \cite{goldstein50}, 
\cite{griego71}) between the solutions of the wave equation and the telegraph 
equation. For this reason the process $(\Z_t)$ will be called the integrated 
telegraph noise (ITN for short).\\
We emphasize that our approach based on stochastic processes gives a better 
understanding of analytical properties.\\
We will give in Section \ref{extensions} below two extensions of Theorem \ref{cas1} 
to the cases where $(Y_t)$ is\\
1) a Markov chain which takes its values in $\{y_1,\ldots,y_k\}$,\\
2) a Markov chain with order $2$ and valued in $\{-1,1\}$.
\mathversion{bold}
\subsection{Case : $\rho_0\neq1$}
\label{section2.2}
\mathversion{normal} In this case, the limit process is Markov. We shall prove 
two kind of convergence results. The
first one corresponds to the law of large numbers
and the second one looks like functional central limit theorem. \\
Recall that $(\tilde{Z}^\Delta_t,\, t\ge 0)$ is the linear interpolation of 
$(Z_t^\Delta)$ and $\rho_0$ (resp. $\eta_0$) has been defined by \eqref{double1} 
(resp. \eqref{double2}).
\begin{thm}
\label{conv1}
1) Suppose that  
$r\Delta_t=\Delta_x$ with $r>0$. Then $\tilde{Z}^\Delta_t$ converges to the 
deterministic limit $-\frac{rt\eta_0}{1-\rho_0}$ when $\Delta_x\to 0$.\\
2) Suppose that $r\Delta_t=\Delta_x^2$ with $r>0$, then the process
$(\xi^\Delta_t,\ t\ge 0)$ defined by
\[
\xi^{\Delta}_t=\tilde{Z}^\Delta_t+\frac{t\sqrt{r}\eta_0}{(1-\rho_0)\sqrt{\Delta_t}}
\]
converges in distribution to the process $(\xi^0_t,\ t\ge 0)$, as $\Delta_x\to 0$, 
where
\begin{equation}
\label{juil3}
\xi^0_t=2r\Big(\frac{-\overline{\tau}}{1-\rho_0}+\frac{\eta_0
\tau}{(1-\rho_0)^2}\Big)t+\sqrt{\frac{r(1+\rho_0)}{1-\rho_0}\Big(1-
\frac{\eta_0^2}{(1-\rho_0)^2}\Big)}W_t,
\end{equation}
($W_t$, $t\ge 0$) is a one-dimensional Brownian motion, $
\tau=(c_0+c_1)/2$ and $\overline{\tau}=(c_1-c_0)/2$.
\end{thm}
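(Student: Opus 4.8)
The strategy is to separate the deterministic (mean) part, which is read off the explicit first-moment formulas \eqref{11b}--\eqref{11c}, from the fluctuations, which are handled by approximating the partial sums of the Markov chain $(Y_t)$ by a martingale and invoking a martingale functional central limit theorem. Throughout put $n=n(\Delta_x)=\lfloor t/\Delta_t\rfloor$. For the perturbed matrix \eqref{1-4B} one has $1-\rho=\alpha+\beta=(1-\rho_0)+2\tau\Delta_x$ and $\alpha-\beta=-\eta_0-2\overline{\tau}\Delta_x$; since $\rho_0\neq1$ forces $|\rho_0|<1$, we get $\rho^{\,n+1}\to0$ as $\Delta_x\to0$. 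As \eqref{11b} and \eqref{11c} differ only by the coefficient of the exponentially small term, the limits below do not depend on the sign of $Y_0=X_0$.

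\emph{Part 1.} Here $\Delta_t=\Delta_x/r$, so $n\sim rt/\Delta_x$. Multiplying \eqref{11b} by $\Delta_x$, the linear term gives $\Delta_x\,\frac{\alpha-\beta}{1-\rho}(n+1)\to-\frac{rt\eta_0}{1-\rho_0}$, while $\Delta_x\,\frac{2\alpha}{(1-\rho)^2}(1-\rho^{n+1})=O(\Delta_x)\to0$; hence $\E[Z^\Delta_t]\to-rt\eta_0/(1-\rho_0)$. From the second moment encoded in Proposition~\ref{a+a-1} (or a direct two-state computation) one gets $\var(X_n)=O(n)$, so $\var(Z^\Delta_t)=\Delta_x^2\var(X_n)=O(\Delta_x)\to0$ and $\tilde Z^\Delta_t$ converges in $L^2$ to the claimed value for each fixed $t$. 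Tightness in $C([0,T])$ is immediate since every linear piece of $\tilde Z^\Delta$ has slope $\pm\Delta_x/\Delta_t=\pm r$, so $(\tilde Z^\Delta)$ is uniformly $r$-Lipschitz; combined with the pointwise $L^2$ convergence this yields uniform convergence in probability, in particular convergence in distribution, to the stated deterministic limit.

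\emph{Part 2.} Now $\Delta_t=\Delta_x^2/r$, so $n\sim rt/\Delta_x^2$ and the prescribed centering equals $\frac{t\sqrt r\,\eta_0}{(1-\rho_0)\sqrt{\Delta_t}}=\frac{rt\eta_0}{(1-\rho_0)\Delta_x}$. For the mean, expand $\frac{\alpha-\beta}{1-\rho}=\frac{-\eta_0}{1-\rho_0}+\Delta_x\big(\frac{-2\overline{\tau}}{1-\rho_0}+\frac{2\tau\eta_0}{(1-\rho_0)^2}\big)+O(\Delta_x^2)$; multiplying by $\Delta_x(n+1)\sim rt/\Delta_x$, the divergent $\Delta_x^{-1}$ term is cancelled exactly by the centering and one is left with $\E[\xi^\Delta_t]\to 2r\big(\frac{-\overline{\tau}}{1-\rho_0}+\frac{\eta_0\tau}{(1-\rho_0)^2}\big)t$ (the exponentially small term of \eqref{11b} contributing only $O(\Delta_x)$), which is precisely the drift of \eqref{juil3}. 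For the fluctuations I would use a corrector/martingale decomposition: since $\rho_0\neq1$, $I-\pi^\Delta$ is invertible on mean-zero functions on $\{-1,1\}$, so there is a two-valued corrector $h_\Delta$, bounded uniformly in $\Delta_x$, with $(I-\pi^\Delta)h_\Delta=\mathrm{id}-\mu_\Delta$, $\mu_\Delta=\frac{\alpha-\beta}{\alpha+\beta}$; then $M^\Delta_k=\sum_{j<k}\big(h_\Delta(Y_{j+1})-\pi^\Delta h_\Delta(Y_j)\big)$ is an $(\mathcal{F}_k)$-martingale and $X_k-k\mu_\Delta=M^\Delta_k+R^\Delta_k$ with $\sup_{k,\Delta_x}|R^\Delta_k|<\infty$. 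Hence $\xi^\Delta_t=\Delta_x M^\Delta_n+(\text{drift})+o(1)$ in probability (the boundary term vanishing after the $\Delta_x$-scaling), and a martingale functional CLT for triangular arrays applies: the jumps $\Delta_x(h_\Delta(Y_{j+1})-\pi^\Delta h_\Delta(Y_j))$ are uniformly $O(\Delta_x)$, so the Lindeberg condition is trivial, while by the ergodic theorem for the two-state chain $\Delta_x^2\sum_{j<n}\E[(M^\Delta_{j+1}-M^\Delta_j)^2\mid\mathcal{F}_j]\to\sigma^2 t$ with $\sigma^2=\lim_{\Delta_x\to0}\E_{\pi_\infty^\Delta}[\pi^\Delta(h_\Delta^2)-(\pi^\Delta h_\Delta)^2]$. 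A direct two-state computation (second eigenvalue $\rho_0$, limiting stationary variance $1-\eta_0^2/(1-\rho_0)^2$, autocorrelations $\rho_0^{\,k}$) gives $\sigma^2=\frac{1+\rho_0}{1-\rho_0}\big(1-\frac{\eta_0^2}{(1-\rho_0)^2}\big)$, which is the diffusion coefficient in \eqref{juil3}. (Alternatively, the one-dimensional marginal convergence can be read off Proposition~\ref{a+a-1} with $\lambda=e^{i\theta\Delta_x}$, $t=n\sim rt/\Delta_x^2$, by expanding $\log\Phi$, and the Markov property of $(Y_t)$ upgrades this to the finite-dimensional distributions.)

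The main obstacle is the tightness, i.e. the genuinely functional part of Part 2: here the interpolation slopes $\pm\Delta_x/\Delta_t=\pm r/\Delta_x$ blow up, so one cannot argue by equicontinuity and must instead rely on the martingale FCLT (which delivers tightness together with convergence) or prove a fourth-moment bound $\E[(\xi^\Delta_t-\xi^\Delta_s)^4]\le C(t-s)^2$ for $t-s\ge\Delta_t$ via Burkholder--Davis--Gundy and the boundedness of the increments of the two-state partial sums. The accompanying subtlety --- that $\pi^\Delta$ itself depends on $\Delta_x$ and the chain starts from the non-stationary state $Y_0=\pm1$ --- is handled by observing that $h_\Delta$ and the relevant stationary averages converge smoothly to those of $\pi^0$ as $\Delta_x\to0$, and that the terms $R^\Delta_k$ are $O(1)$, hence negligible after multiplication by $\Delta_x$.
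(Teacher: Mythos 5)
Your proof is correct, but it takes a genuinely different route from the paper's. The paper works entirely through the explicit moment generating function: Proposition~\ref{a+a-1} gives $\Phi(\lambda,t)=a_+\theta_+^t+a_-\theta_-^t$ with $\theta_\pm$ the roots of \eqref{polybis}; setting $\lambda=e^{-\mu\Delta_x}$ and expanding $\sqrt{\mathcal{D}}$ and $\theta_+$ to first (resp.\ second) order in $\Delta_x$ yields the limit of the Laplace transform of $\tilde Z^\Delta_t$ (resp.\ $\xi^\Delta_t$), whence one-dimensional convergence via Curtiss's theorem; finite-dimensional distributions are then obtained from the Markov property of $(X_t,Y_t)$ (Proposition~\ref{multimarginales}). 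You instead read the drift off the closed-form first moments \eqref{11b}--\eqref{11c} and treat the fluctuations by the Poisson-equation/corrector decomposition $X_k-k\mu_\Delta=M^\Delta_k+O(1)$ combined with the martingale FCLT for triangular arrays; your centering cancellation, the limiting drift $2r\big(\tfrac{-\overline{\tau}}{1-\rho_0}+\tfrac{\eta_0\tau}{(1-\rho_0)^2}\big)t$ and the Green--Kubo variance $\tfrac{1+\rho_0}{1-\rho_0}\big(1-\tfrac{\eta_0^2}{(1-\rho_0)^2}\big)$ all agree with \eqref{juil1}--\eqref{juil2}. What each approach buys: the paper's eigenvalue expansion is elementary, self-contained, and also powers Proposition~\ref{cas_limit} (the $\Delta_x^3$ scaling), but it only produces marginals and must bolt on finite-dimensional convergence and tightness separately; your martingale route delivers process-level convergence (fidis plus tightness) in one stroke and makes the variance structurally transparent. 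Two remarks. First, your parenthetical ``$\rho_0\neq1$ forces $|\rho_0|<1$'' is not quite right: $\rho_0=-1$ (i.e.\ $\alpha_0=\beta_0=1$) is allowed in the theorem; there the spectral gap of $\pi^\Delta$ is only of order $\Delta_x$, but since the mixing time $O(1/\Delta_x)$ is still $o(n)=o(1/\Delta_x^2)$ your ergodic-averaging step survives, and the limiting variance correctly vanishes. Second, your concern about tightness is well placed: the paper asserts that the unit-slope equicontinuity argument of Section~\ref{sectionpreuve1} applies ``without any change'', but in the regime $r\Delta_t=\Delta_x^2$ the interpolation slopes are $\pm r/\Delta_x$, so that argument does not literally transfer; your martingale-FCLT (or Burkholder--Davis--Gundy fourth-moment) substitute is the more robust way to close this step.
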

\begin{proof} See Section \ref{section preuve2}.\end{proof}
Gruber and Schweizer have proved in \cite{gruber06} a weak convergence result 
for a large class of generalized correlated random walks. However these results 
and ours can be only compared in the case
$\alpha_0=\beta_0$.\\[5pt]
Note that 
\[
1-\frac{\eta_0^2}{(1-\rho_0)^2}=0\Longleftrightarrow \alpha_0=0\quad\mbox{or}\quad\beta_0=0.
\]
Suppose for instance that $\alpha_0=0$. Then $\beta_0,c_0>0$ and 
\[
\xi^{\Delta}_t=\tilde{Z}^{\Delta}_t+\frac{t\sqrt{r}}{\sqrt{\Delta_t}}\quad\mbox{and}\quad \xi^0_t=\frac{2rc_0}{\beta_0}\, t.
\]
Obviously, the diffusion coefficient of $(\xi^0_t)$ can also cancel when $\rho_0=-1$.\\
Since $\rho_0=-1\Longleftrightarrow\alpha_0=\beta_0=1$, then $c_0,c_1<0$ and 
\[
\xi^{\Delta}_t=\tilde{Z}^{\Delta}_t\quad\mbox{and}\ \xi^0_t=-r\overline{\tau}t.
\]
This shows that, in the symmetric case (i.e. $c_0=c_1$), we have $\xi^0_t=0$. 
This means that the normalization is not the right one since the limit is null. 
Changing the rescaling we can obtain a non-trivial limit.
\begin{prop}
\label{cas_limit}
Suppose $\alpha_0=\beta_0=1$, $c_0=c_1<0$ and $r\Delta_t=\Delta_x^3$ with $r>0$. \\
The interpolated persistent walk $(\tilde{Z}^\Delta_t,\ t\ge 0)$
converges in law, as
$\Delta_x\to 0$, to $(\sqrt{-r c_0}W_t,\, t\ge 0)$ where $(W_t)$ is a standard 
Brownian motion. 
\end{prop}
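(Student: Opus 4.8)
Since $\alpha_0=\beta_0=1$ and $c_0=c_1<0$, set $p:=-c_0\Delta_x\in(0,1)$ for $\Delta_x$ small, so that the transition matrix becomes
\[
\pi^\Delta=\begin{pmatrix}p&1-p\\ 1-p&p\end{pmatrix}.
\]
The first observation, special to this matrix, is that $\P(Y_{t+1}=Y_t\mid\mathcal F_t)=p$ whatever the value of $Y_t$; hence the increments $\epsilon_t:=Y_tY_{t-1}$, $t\ge1$, are i.i.d., independent of $Y_0$, with $\P(\epsilon_t=1)=p$ (a ``repeat'') and $\P(\epsilon_t=-1)=1-p$ (a ``flip''). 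Consequently $Y_t=Y_0\prod_{j=1}^t\epsilon_j=Y_0(-1)^t(-1)^{R_t}$, where $R_t:=\#\{1\le j\le t:\epsilon_j=1\}$ counts the repeats. I would then sum over $t$ and split $\{0,\dots,n\}$ into the maximal blocks on which $R_t$ is constant; collapsing the alternating sum $\sum(-1)^t$ over each block to its endpoints and reindexing, one obtains the exact identity
\[
X_n=Y_0\sum_{i=1}^{R_n}(-1)^{i+\tau_i}+\theta_n,\qquad |\theta_n|\le1,
\]
where $\tau_i$ is the step index of the $i$-th repeat. This is elementary once the $\epsilon_t$ are recognised to be i.i.d.

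Next I would analyse the two pieces separately. Writing $G_1:=\tau_1$ and $G_j:=\tau_j-\tau_{j-1}$ for the gaps — i.i.d. geometric, $\P(G_j=m)=(1-p)^{m-1}p$ — one has $i+\tau_i=\sum_{j=1}^i(1+G_j)$, hence $(-1)^{i+\tau_i}=\prod_{j=1}^i\zeta_j=:\eta_i$ with $\zeta_j:=(-1)^{1+G_j}$ i.i.d. and $\P(\zeta_j=1)=\P(G_j\ \text{odd})=\frac1{2-p}$. Thus $(\eta_i)$ is exactly the increment sequence of a \emph{symmetric} correlated random walk with persistence coefficient $\rho_p:=\E[\zeta_j]=\frac p{2-p}\to0$ and $\E[\eta_i]=\rho_p^{\,i}$ (so no asymptotic drift, consistently with the Proposition). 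Meanwhile $(R_t)$ is the renewal counting process of the $G_j$'s; since $\E R_{\lfloor s/\Delta_t\rfloor}=\lfloor sr/\Delta_x^3\rfloor\,p\sim -rc_0\,s/\Delta_x^2$, the law of large numbers gives, uniformly on compact $s$-sets and in probability,
\[
\Delta_x^2\,R_{\lfloor s/\Delta_t\rfloor}\ \longrightarrow\ -rc_0\,s.
\]

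The core of the proof is a functional CLT for $A_m:=\sum_{i=1}^m\eta_i$. A direct moment computation gives $\var(A_m)=m+2\sum_{k=1}^{m-1}(m-k)\rho_p^{\,k}+O(1)=m\bigl(1+O(\rho_p)\bigr)+O(1)$, so with $m=\lfloor v/\Delta_x^2\rfloor$ and $\rho_p=O(\Delta_x)$ one gets $\Delta_x^2\var\bigl(A_{\lfloor v/\Delta_x^2\rfloor}\bigr)\to v$; together with standard fourth-moment increment bounds for the correlated walk (whose spectral gap $1-\rho_p$ stays bounded away from $0$) this gives tightness, and a Donsker-type theorem for correlated random walks — essentially the argument behind Theorem~\ref{conv1}, here in the easy regime $\rho_p\to0$ — yields that $\bigl(\Delta_x A_{\lfloor v/\Delta_x^2\rfloor}\bigr)_{v\ge0}$ converges in distribution to a standard Brownian motion $(\widetilde W_v)$. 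Finally I would compose: the time-change $s\mapsto\Delta_x^2R_{\lfloor s/\Delta_t\rfloor}$ converges \emph{in probability} to the deterministic continuous map $s\mapsto -rc_0 s$, so the pair $\bigl(\Delta_x A_{\lfloor\cdot/\Delta_x^2\rfloor},\ \Delta_x^2R_{\lfloor\cdot/\Delta_t\rfloor}\bigr)$ converges jointly (despite $A$ and $R$ being built from the same gaps), and the map $(\omega,\psi)\mapsto\omega\circ\psi$ is continuous at continuous $\omega$; hence
\[
\Delta_x X_{\lfloor s/\Delta_t\rfloor}=Y_0\,\Delta_x A_{R_{\lfloor s/\Delta_t\rfloor}}+\Delta_x\theta_{\lfloor s/\Delta_t\rfloor}\ \longrightarrow\ Y_0\,\widetilde W_{-rc_0\,s}\ \deq\ \sqrt{-rc_0}\,W_s
\]
in law, using Brownian scaling and the symmetry $-\widetilde W\deq\widetilde W$ to absorb $Y_0\in\{-1,1\}$. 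Since $\tilde Z^\Delta$ differs from $\Delta_x X_{\lfloor\cdot/\Delta_t\rfloor}$ by at most one increment, i.e.\ by $\le\Delta_x$ in sup norm, it has the same limit, which is the claim.

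The hard part will be the functional CLT for $(A_m)$ in the triangular-array regime where the persistence coefficient $\rho_p$ itself depends on $\Delta_x$ (it tends to $0$, which is the favourable direction, but the usual references assume a fixed coefficient), together with the Anscombe-type random time-change turning $A_{R_n}$ into a time-changed Brownian motion; the rest is bookkeeping.
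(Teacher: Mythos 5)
Your proposal is correct in outline, but it takes a genuinely different route from the paper. The paper's own proof is a short analytic computation: it substitutes $\lambda=e^{-\mu\Delta_x}$ and $\alpha=\beta=1+c_0\Delta_x$ into the explicit moment generating function of Proposition~\ref{a+a-1}, expands the roots of \eqref{polybis} to third order to get $\theta_+=1-\tfrac{c_0\mu^2}{2}\Delta_x^3+o(\Delta_x^3)$ and $\theta_-=-1-2c_0\Delta_x+o(\Delta_x)$, whence $a_+\theta_+^{t/\Delta_t}\to e^{-rc_0\mu^2t/2}$ and $a_-\theta_-^{t/\Delta_t}\to 0$, identifying the Gaussian marginal; finite-dimensional marginals and tightness are then handled exactly as for Theorem~\ref{conv1}. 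Your argument instead exposes the mechanism: the walk alternates almost deterministically, the signal sits entirely in the rare ``repeats'' occurring at rate $-c_0\Delta_x$ per step (hence $-rc_0/\Delta_x^2$ per unit macroscopic time), and $X_n$ is, up to an $O(1)$ error, a symmetric correlated walk $A$ with vanishing persistence $\rho_p=p/(2-p)$ run at the repeat clock $R_n$. This explains structurally why the exponent is $3$ (one power of $\Delta_x$ for space, two for the effective diffusive clock) and why the variance is $-rc_0t$. Moreover the FCLT you flag as the ``hard part'' is in fact already available in the paper: your $\eta$-chain has symmetric transition matrix with staying probability $\tfrac{1}{2-p}=\tfrac12+O(\Delta_x)$, i.e.\ it falls under Theorem~\ref{conv1}(2) with $\alpha_0=\beta_0=1/2$, $\rho_0=\eta_0=0$, $\overline{c}=0$ and diffusive scaling $\Delta_t'=\Delta_x^2$, which yields exactly the standard Brownian limit for $\bigl(\Delta_x A_{\lfloor v/\Delta_x^2\rfloor}\bigr)_v$ with no drift; the remaining ingredients (joint convergence with the clock, which is immediate because the clock's limit is deterministic, and continuity of composition at a continuous limit) are standard. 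So your route is longer but more informative and self-reinforcing with Theorem~\ref{conv1}, whereas the paper's is a mechanical two-line expansion of quantities it has already computed; neither has a gap the other avoids, since the paper too only sketches the passage from one-dimensional marginals to the process-level statement.
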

\begin{proof} See subsection \ref{scas_limit}\end{proof}
\subsection{Organization of the paper}
The third section presents few properties of the process $(\Z_t,\, t\ge 0)$ 
which has been defined by \eqref{def_1.2}. Theorem \ref{cas1} will be proven 
in Section \ref{sectionpreuve1}. Section \ref{extensions} will be devoted to 
two extensions of Theorem \ref{cas1}. In subsection \ref{mgf} we determine the 
generating function of $X_t$ (recall that $X_t$ has been defined by \eqref{25*1}). 
This is the main tool which permits to prove Theorem \ref{conv1} and Proposition 
\ref{cas_limit} (see subsections \ref{***} and \ref{scas_limit}).
\section{Properties of the integrated telegraph noise}\label{properties_*}
The aim of this section is to study the two dimensional process $(\Z_t,\
\N_t;\ t\ge 0)$ introduced in \eqref{.*.} and \eqref{def_1.2}. In the particular 
symmetric case $c_0=c_1$, the study is simpler since the process 
$(N^{c_0}_t,\ t\ge0)$ is a Poisson process with rate $c_0$
($\E(N^{c_0}_t)=c_0 t$) and $N_0^{c_0}=0$. However we shall study the general case. \\

First, we determine in Proposition \ref{loi__1} the conditional density of 
$\Z_t$ given $\N_t=n$. As a by product we obtain the distribution of $\Z_t$ 
(see Proposition \ref{bessel__*}). Second, we prove in Proposition \ref{markk} 
that $(\Z_t,\N_t,\, t\ge 0)$ is Markov and we determine its semi-group. 
We conclude this section by showing that the solution of the telegraph equation 
can be expressed in terms of the associated wave equation and 
$(Z_t^{c_0,c_0})_{t\ge 0}$. For this reason, $(\Z_t)_{t\ge 0}$ will be called 
the integrated telegraph noise (ITN for short).
Recall that:
\begin{equation}
\label{constantes_c}
\tau=\frac{c_0+c_1}{2},\quad\quad \overline{\tau}=\frac{c_1-c_0}{2}.
\end{equation}
\begin{prop}
\label{loi__1}
1) $\P(\N_t=0)=e^{-tc_0}$ and given $\N_t=0$, we have $\Z_t=t$.\\
2) The counting process takes even values with probability:
\begin{equation}
\label{pair__*}
\P(\N_t=2k)=\frac{(c_0c_1)^k\alpha_k(t)}{2^{2k}k!(k-1)!}\, e^{-\tau t}\quad\mbox{with}\
\alpha_k(t)=\int_{-t}^t (t-z)^{k-1}(t+z)^{k} e^{\overline{\tau}z}dz,
\end{equation}
and the conditional distribution of $\Z_t$ is given by
\begin{equation}
\label{cond_pair}
\P(\Z_t\in dz\vert \N_t=2k)=\frac{1}{\alpha_k(t)}(t-z)^{k-1}(t+z)^k e^{\overline{\tau}z}1_{[-t,t]}(z)\quad (k\ge 1).
\end{equation}
3) The counting process takes odd values with probability:
\begin{equation}
\label{impair__*}
\P(\N_t=2k+1)=\frac{c_0^{k+1}c_1^k\tilde{\alpha}_k(t)}{2^{2k+1}(k!)^2}\, e^{-\tau t}\quad\mbox{with}\
\tilde{\alpha}_k(t)=\int_{-t}^t (t-z)^{k}(t+z)^{k} e^{\overline{\tau}z}dz,
\end{equation}
and the conditional distribution of $\Z_t$ is given by
\begin{equation}
\label{cond_impair}
\P(\Z_t\in dz\vert \N_t=2k+1)=\frac{1}{\tilde{\alpha}_k(t)}(t-z)^{k}(t+z)^k e^{\overline{\tau}z}1_{[-t,t]}(z)\quad (k\ge 0).
\end{equation}
\end{prop}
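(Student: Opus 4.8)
The plan is to compute everything directly from the construction \eqref{**1.1}--\eqref{def_1.2}. Write $T_0=0$ and $T_n=\lambda_1e_1+\cdots+\lambda_ne_n$ for the successive jump times of $\N$, and $S_i=T_i-T_{i-1}=\lambda_ie_i$ for the interarrival times. Because the $e_i$ are i.i.d.\ $\mathrm{Exp}(1)$ and $\lambda_i=1/c_0$ (resp.\ $1/c_1$) for $i$ odd (resp.\ even), the $S_i$ are independent with $S_i\sim\mathrm{Exp}(c_0)$ for $i$ odd and $S_i\sim\mathrm{Exp}(c_1)$ for $i$ even; in particular $T_n\to\infty$ a.s., so $\N_t<\infty$ a.s. Part~1) is then immediate: $\{\N_t=0\}=\{T_1>t\}=\{e_1>c_0t\}$ has probability $e^{-c_0t}$, and on this event $\N_u\equiv0$ on $[0,t]$, hence $\Z_t=\int_0^t(-1)^0\,du=t$.

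For $n\ge1$ the key deterministic observation is that on $\{\N_t=n\}=\{T_n\le t<T_{n+1}\}$, decomposing $[0,t]$ along the jump times,
\[
\Z_t=\sum_{i=1}^n(-1)^{i-1}S_i+(-1)^n(t-T_n),
\]
and substituting $t-T_n=t-\sum_{i=1}^nS_i$ the odd-indexed terms cancel when $n=2k$ and the even-indexed ones when $n=2k+1$, so that
\[
\Z_t=t-2\,(S_2+S_4+\cdots+S_{2k})\ \text{ on }\{\N_t=2k\},\qquad
\Z_t=2\,(S_1+S_3+\cdots+S_{2k+1})-t\ \text{ on }\{\N_t=2k+1\}.
\]
In both cases $\Z_t\in[-t,t]$ because $\sum_{i=1}^nS_i\le t$, and conditionally $\Z_t$ is an affine function of a single gamma-distributed sum: the sum of the $k$ rate-$c_1$ interarrivals in the even case, the sum of the $k+1$ rate-$c_0$ interarrivals in the odd case.

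The second step is the explicit integration. Take $n=2k$ with $k\ge1$, and set $a=S_1+S_3+\cdots+S_{2k-1}\sim\Gamma(k,c_0)$ and $b=S_2+S_4+\cdots+S_{2k}\sim\Gamma(k,c_1)$, independent; then $T_{2k}=a+b$ and the event additionally requires $S_{2k+1}>t-a-b$, of conditional probability $e^{-c_0(t-a-b)}$ since $S_{2k+1}\sim\mathrm{Exp}(c_0)$ is independent of $(a,b)$. Writing the two gamma densities and integrating $a$ over $(0,t-b)$ with $b$ fixed, the crucial cancellation is that $e^{-c_0a}\,e^{-c_0(t-a-b)}=e^{-c_0(t-b)}$ is free of $a$, so $\int_0^{t-b}a^{k-1}\,da=(t-b)^k/k$ and
\[
\P(\N_t=2k,\ b\in db)=\frac{c_0^kc_1^k}{k!\,(k-1)!}\,b^{k-1}(t-b)^k\,e^{-c_1b-c_0(t-b)}\,db,\qquad 0<b<t.
\]
The substitution $b=(t-z)/2$ (hence $t-b=(t+z)/2$, $db=\tfrac12\,dz$) turns the polynomial into $2^{-2k}(t-z)^{k-1}(t+z)^k$ and the exponent into $-\tau t+\overline{\tau}z$ with $\tau,\overline{\tau}$ as in \eqref{constantes_c}, which is exactly \eqref{pair__*}--\eqref{cond_pair} after integrating over $z\in[-t,t]$ (that integral being $\alpha_k(t)$) and normalizing. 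The odd case $n=2k+1$ is symmetric: put $a=S_1+S_3+\cdots+S_{2k+1}\sim\Gamma(k+1,c_0)$ and $b=S_2+\cdots+S_{2k}\sim\Gamma(k,c_1)$, use $S_{2k+2}>t-a-b$ of probability $e^{-c_1(t-a-b)}$, integrate out $b$ (now $e^{-c_1b}\,e^{-c_1(t-a-b)}=e^{-c_1(t-a)}$ is $b$-free), and substitute $a=(t+z)/2$; this produces \eqref{impair__*}--\eqref{cond_impair}.

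The argument is elementary once the $S_i$-decomposition is set up; I expect the only delicate points to be bookkeeping: tracking which interarrivals carry rate $c_0$ and which carry $c_1$, handling correctly the truncated final sojourn $t-T_n$ in the identity for $\Z_t$, and the boundary cases $k=0$ in part~3) (where $b$ is the empty sum and the event $\{\N_t=1\}$ is checked directly, reproducing \eqref{impair__*}--\eqref{cond_impair} with $\tilde\alpha_0(t)=\int_{-t}^te^{\overline{\tau}z}\,dz$) and $k\ge1$ in part~2). No essential difficulty arises beyond organizing these cases.
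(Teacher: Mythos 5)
Your proposal is correct and follows essentially the same route as the paper: both decompose $\Z_t$ on $\{\N_t=n\}$ into the alternating sum of interarrival times plus the truncated final sojourn, observe that $\Z_t$ reduces to an affine function of the sum of the even-indexed (resp.\ odd-indexed) interarrivals, integrate against the two independent gamma densities using the same exponential cancellation, and conclude with the change of variable $b=(t-z)/2$ (resp.\ $a=(t+z)/2$). The only cosmetic difference is that you work with the rate-$c_i$ variables $S_i=\lambda_i e_i$ while the paper keeps the standard exponentials $e_i$ and rescales inside the integral; the computations are identical.
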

\begin{cor}
In the particular symmetric case $c_0=c_1$, the conditional density function
of $Z^{c_0}_t$ given $N^{c_0}_t=n$ is the centered beta density, i.e.
\begin{equation}\label{eq:loi cond1_*}
\mbox{for}\ n=2k,\ k\in\mathbb{N^*}:\quad
f_n(t,z)=\chi_{2k}\frac{(t+z)^k(t-z)^{k-1}}{t^{2k}}1_{[-t,t
]}(z),
\end{equation}
\begin{equation}\label{eq:loi cond2_*}
\mbox{for}\ n=2k+1,\ k\in\mathbb{N}:\quad
f_n(t,z)=\chi_{2k+1}\frac{(t+z)^k(t-z)^{k}}{t^{2k+1}}1_{[-t, t
]}(z),
\end{equation}
with
\begin{eqnarray*}
\chi_{2k+1}=\chi_{2k+2}=\frac{1}{2^{2k+1}B(k+1,k+1)}=\frac{(2k+1)!}{2^{2k+1}(k!)^2}\quad
(k\ge 0),
\end{eqnarray*}
($B$ is the beta function (first Euler function):
$B(r,s)=\frac{\Gamma(r)\Gamma(s)}{\Gamma(r+s)}$).
\end{cor}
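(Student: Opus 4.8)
The plan is simply to read the result off Proposition \ref{loi__1} by setting $c_0 = c_1$ and then recognizing the normalizing constants as Beta functions.

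First, $c_0 = c_1$ means $\tau = c_0$ and $\overline{\tau} = 0$, so the factor $e^{\overline{\tau}z}$ in \eqref{cond_pair} and \eqref{cond_impair} disappears. Hence the conditional density of $Z^{c_0}_t$ given $N^{c_0}_t = n$ is proportional to $(t-z)^{k-1}(t+z)^k 1_{[-t,t]}(z)$ when $n = 2k$, $k \ge 1$, and proportional to $(t-z)^k(t+z)^k 1_{[-t,t]}(z)$ when $n = 2k+1$, $k \ge 0$; the case $n = 0$ is the point mass at $z = t$ from part 1) of Proposition \ref{loi__1} and is not covered by the corollary. This already gives the two displayed forms \eqref{eq:loi cond1_*}--\eqref{eq:loi cond2_*} up to the constants $\chi_n$.

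Next I would compute the normalizations $\alpha_k(t)$ and $\tilde{\alpha}_k(t)$ with $\overline{\tau} = 0$ using the affine change of variable $z = t(2u-1)$, $u \in [0,1]$ (so $t+z = 2tu$, $t-z = 2t(1-u)$, $dz = 2t\,du$), which turns each integral into a multiple of a Beta integral: $\alpha_k(t) = 2^{2k} t^{2k} B(k+1,k)$ and $\tilde{\alpha}_k(t) = 2^{2k+1} t^{2k+1} B(k+1,k+1)$. Dividing and cancelling the powers of $t$ against the denominators in \eqref{eq:loi cond1_*}--\eqref{eq:loi cond2_*} yields $\chi_{2k} = 1/(2^{2k} B(k+1,k))$ and $\chi_{2k+1} = 1/(2^{2k+1} B(k+1,k+1))$, which via $B(k+1,k) = k!(k-1)!/(2k)!$ and $B(k+1,k+1) = (k!)^2/(2k+1)!$ become the claimed factorial expressions.

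The only thing needing a bit of care --- the closest there is to an obstacle --- is the index bookkeeping behind the asserted identity $\chi_{2k+1} = \chi_{2k+2}$: one must check that the even-order constant $1/(2^{2k} B(k+1,k))$, reindexed by $k \mapsto k+1$, agrees with $(2k+1)!/(2^{2k+1}(k!)^2)$, which follows from the elementary simplifications $(2k+2)! = 2(k+1)(2k+1)!$ and $(k+1)! = (k+1)k!$.
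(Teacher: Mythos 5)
Your proof is correct and is exactly the intended derivation: the paper states this as an immediate corollary of Proposition \ref{loi__1}, obtained by setting $c_0=c_1$ (so $\overline{\tau}=0$) and evaluating $\alpha_k(t)$, $\tilde{\alpha}_k(t)$ as Beta integrals. Your computations of the normalizing constants and the check that $\chi_{2k+1}=\chi_{2k+2}$ are all accurate.
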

\noindent
{\it Proof of Proposition \ref{loi__1}}.
Associated with $n\ge 0$ and a bounded continuous function $f$, we define
\[
\Delta_n(f)=\E\Big[f(\Z_t)1_{\{\N_t=n\}}\Big].
\]
{\bf a)} When $n=0$, we obtain
\[
\Delta_0(f)=\E\Big[ f(\Z_t)1_{\{t<\lambda_1 e_1\}} \Big].
\]
If $t<\lambda_1 e_1$, then $\Z_t=t$ and
\[
\Delta_0(f)=f(t)\P(t<\lambda_1 e_1)=f(t)e^{-tc_0}.
\]
{\bf b)} When $n\ge 1$, using \eqref{**1.1} we obtain
\[
\Delta_n(f)=\E\Big[f(\Z_t)1_{\{\lambda_1 e_1+\ldots+\lambda_n e_n \le t< \lambda_1 e_1+\ldots +\lambda_{n+1}e_{n+1}\}}\Big].
\]
If $\lambda_1 e_1+\ldots+\lambda_n e_n \le t< \lambda_1 e_1+\ldots +\lambda_{n+1}e_{n+1}$ then
\begin{eqnarray*}
\Z_t&=&\int_0^{\lambda_1 e_1}(-1)^0du+\int_{\lambda_1 e_1}^{\lambda_1 e_1+\lambda_2 e_2}(-1) du+\ldots+\int_{\lambda_1 e_1+\ldots+\lambda_{n-1}e_{n-1}}^{\lambda_1 e_1+\ldots+\lambda_{n}e_{n}}(-1)^{n-1}du\\
&+&\int_{\lambda_1 e_1+\ldots+\lambda_{n}e_{n}}^t (-1)^n du.
\end{eqnarray*}
Hence
\begin{equation}
\label{resum__*}
\Z_t=\lambda_1 e_1-\lambda_2e_2+\lambda_3e_3+\ldots +(-1)^{n-1}\lambda_n e_n +(-1)^n(t-\lambda_1 e_1-\ldots-\lambda_n e_n).
\end{equation}
{\bf c)} Evaluation of $\Delta_{2k}(f)$, $k\ge 1$.\\
We introduce two sequences of random variables associated with $(e_n)$:
\begin{equation}
\label{pair-impair}
\xi_k^e=e_2+\ldots+e_{2k},\quad \xi^o_k=e_1+\ldots+e_{2k-1},\quad (k\ge 1).
\end{equation}
By \eqref{resum__*}, \eqref{.*.} and \eqref{pair-impair} we obtain the simpler 
expression
\[
\Delta_{2k}(f)=\E\Big[ f(t-2\xi_k^e/c_1)1_{\{ \xi^o_k/c_0+\xi_k^e/c_1\le t< \xi^o_k/c_0+\xi_k^e/c_1+ e_{2k+1}/c_0\}} \Big].
\]
Note that from our assumptions,  $\xi^e_k$, $\xi^o_k$ and $e_{2k+1}$ are 
independent r.v.'s,  $\xi^o_k$ and $\xi^e_k$ are both  gamma distributed 
with parameter $k$. Consequently: 
\begin{eqnarray*}
\Delta_{2k}(f)&=&\frac{1}{((k-1)!)^2}\int_{D_t }\exp\{-c_0(t-y/c_0-x/c_1)\}
f(t-2 x/c_1)e^{-x-y}x^{k-1}y^{k-1}dx\,dy\\
&=&\frac{c_0^ke^{-tc_0}}{k!(k-1)!}\int_{0}^{tc_1}f(t-2 x/c_1) x^{k-1}(t-x/c_1)^k 
\exp\Big\{ \Big( \frac{c_0}{c_1}-1 \Big)x \Big\}dx,
\end{eqnarray*}
where $D_t=\Rset_+^2\cap\{y/c_0+ x/c_1\le t \}$.
Using the change of variable $z=t-2x/c_1$, we obtain $x=c_1\frac{t-z}{2}$, 
$t-x/c_1=\frac{t+z}{2}$ and
\begin{equation}
\label{fin_pair__*}
\Delta_{2k}(f)=\frac{(c_0c_1)^k}{2}\frac{e^{-(c_0+c_1)t/2}}{k!(k-1)!}
\int_{-t}^tf(z)\Big(\ \frac{t-z}{2} \Big)^{k-1}\Big( \frac{t+z}{2} \Big)^k
\exp\{ ( c_1-c_0 )z/2\}dz.
\end{equation}
Finally \eqref{fin_pair__*} and \eqref{constantes_c} imply \eqref{pair__*} and 
\eqref{cond_pair}.\\
{\bf d)} Evaluation of $\Delta_{2k+1}(f)$ for $k\ge 0$.
The arguments are similar to those presented in part c). On the event
$\xi^o_{k+1}/c_0+\xi_k^e/c_1\le t<\xi^o_{k+1}/c_0+\xi_k^e/c_1+ e_{2k+2}/c_1$, 
we have: $\Z_t=2\xi^o_{k+1}/c_0-t$; this implies
\[
\Delta_{2k+1}(f)=\E\Big[ 1_{\{ \xi_{k+1}^o/c_0+\xi_k^e/c_1\le t \}}
\exp\Big( -c_1(t-\xi_{k+1}^o/c_0-\xi_k^e/c_1) \Big) f(2\xi_{k+1}^o/c_0-t) \Big].
\]
Since $\xi_{k+1}^o$ and $\xi_k^e$ are independent and gamma distributed with 
parameter $k+1$ (resp. $k$), we get
\begin{equation}
\label{fin_impair__*}
\Delta_{2k+1}(f)=\frac{c_0^{k+1}c_1^k}{2(k!)^2}e^{-(c_0+c_1)t/2}\int_{-t}^tf(z)
\Big(\ \frac{t-z}{2} \Big)^{k}\Big( \frac{t+z}{2} \Big)^k\exp\Big\{ ( c_1-c_0 )z/2 \Big\}dz.
\end{equation}
This leads directly to \eqref{impair__*} and \eqref{cond_impair}.\hfill{$\Box$}\\[5pt]
\noindent
Let us recall the definition of the modified Bessel functions:
\[
I_\nu(\xi)=\sum_{m\ge0}\frac{(\xi/2)^{\nu+2m}}{m!\Gamma(\nu+m+1)}.
\]
\begin{prop}
\label{bessel__*}
The distribution of $\Z_t$ is given by
\begin{equation}
\label{distrib__*}
\P(\Z_t\in dx)=e^{-c_0t}\delta_t(dx)+e^{-\tau t} f(t,x) 1_{[-t,t]}(x),
\end{equation}
where
\begin{equation}
\label{express__*}
f(t,x)=\frac{1}{2}\Big[ \sqrt{\frac{c_0c_1(t+x)}{t-x}}
I_1\Big( \sqrt{c_0c_1(t^2-x^2)} \Big) +c_0I_0\Big(\sqrt{c_0c_1(t^2-x^2)} \Big)
\Big]e^{\overline{\tau}x}.
\end{equation}
\end{prop}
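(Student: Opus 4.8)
The plan is to read off the law of $\Z_t$ from the elementary decomposition
\[
\P(\Z_t\in dx)=\sum_{n\ge 0}\P(\N_t=n)\,\P(\Z_t\in dx\mid\N_t=n),
\]
whose ingredients are all supplied by Proposition \ref{loi__1}. The term $n=0$ contributes exactly $e^{-c_0t}\delta_t(dx)$ by part~1) of that proposition, which is the atomic part of \eqref{distrib__*}; so everything reduces to identifying the density on $[-t,t]$ coming from the terms $n\ge1$.

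First I would handle even and odd $n$ separately. Multiplying \eqref{pair__*} by \eqref{cond_pair} the normalising constant $\alpha_k(t)$ cancels, so the contribution of $\{\N_t=2k\}$, $k\ge1$, to the density at $z$ is
\[
\frac{(c_0c_1)^k}{2^{2k}k!(k-1)!}\,(t-z)^{k-1}(t+z)^k\,e^{-\tau t}\,e^{\overline{\tau}z},
\]
and in the same way, from \eqref{impair__*} and \eqref{cond_impair}, the contribution of $\{\N_t=2k+1\}$, $k\ge0$, is
\[
\frac{c_0^{k+1}c_1^k}{2^{2k+1}(k!)^2}\,(t-z)^k(t+z)^k\,e^{-\tau t}\,e^{\overline{\tau}z}.
\]
Since all summands are nonnegative, Tonelli's theorem lets me sum these series term by term against any bounded test function, so no convergence issue arises beyond the (automatic) entirety of the Bessel series.

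The core of the argument is then to recognise the two power series. Putting $\xi=\sqrt{c_0c_1(t^2-x^2)}$, so that $c_0c_1(t-x)(t+x)=\xi^2$, the odd contributions summed over $k\ge0$ give, after factoring out $c_0/2$, the series $\tfrac{c_0}{2}\sum_{k\ge0}(\xi/2)^{2k}/(k!)^2=\tfrac{c_0}{2}I_0(\xi)$ directly from the series definition of $I_0$. For the even contributions I would write $(t-x)^{k-1}(t+x)^k=(t-x)^{-1}\big((t-x)(t+x)\big)^k$, pull out $(t-x)^{-1}$, and reach $(t-x)^{-1}\sum_{k\ge1}(\xi/2)^{2k}/\big(k!(k-1)!\big)$; the index shift $k=m+1$ converts this into $(t-x)^{-1}(\xi/2)\sum_{m\ge0}(\xi/2)^{2m+1}/\big(m!(m+1)!\big)=(t-x)^{-1}(\xi/2)I_1(\xi)$, and since $\xi/(t-x)=\sqrt{c_0c_1(t+x)/(t-x)}$ this is $\tfrac12\sqrt{c_0c_1(t+x)/(t-x)}\,I_1(\xi)$. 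Adding the even and odd parts, multiplying by $e^{-\tau t}e^{\overline{\tau}x}$ and reinstating $1_{[-t,t]}(x)$ reproduces \eqref{express__*}.

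I expect no genuine obstacle: once Proposition \ref{loi__1} is in hand the computation is mechanical. The only points needing care are the bookkeeping of the powers of $2$ (already packaged as $2^{2k}$ and $2^{2k+1}$ in the cited formulas, so that they match $(\xi/2)^{2k}$) and the index shift $k\mapsto k-1$, which is precisely what makes the even series sum to $I_1$ rather than to $I_0$.
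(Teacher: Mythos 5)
Your proposal is correct and follows essentially the same route as the paper: decompose $\E[\varphi(\Z_t)]$ over the values of $\N_t$ using Proposition \ref{loi__1}, isolate the atom at $x=t$ from the $n=0$ term, and resum the even and odd series into $I_1$ and $I_0$ respectively (the paper's $S_e$ and $S_o$), with the same index shift producing $I_1$ in the even case. The bookkeeping of the powers of $2$ and the identification $\xi/(t-x)=\sqrt{c_0c_1(t+x)/(t-x)}$ are exactly as in the paper's computation.
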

\begin{rem}
Let us focus our attention to the symmetric case $c_0=c_1$. We can introduce some 
randomization of the initial condition as follows: let $\epsilon$ be a $\{-1,1\}$-valued 
random variable, independent from the Poisson process $N^{c_0}_t$, with
$p:=\P(\epsilon=1)=1-\P(\epsilon=-1)$. It is easy to deduce from \eqref{distrib__*} 
that we have
\begin{equation}
\label{der}
\P(\epsilon Z_t^{c_0}/t\in dx)=\Big(p\delta_1(dx)+(1-p)\delta_{-1}(dx)+g(t,x)dx\Big)e^{-c_0 t},
\end{equation}
with
\[
g(t,x)=\frac{c_0 t}{2}\Big\{I_0\Big(c_0
t\sqrt{1-x^2}\Big)+\frac{1+(2p-1)x}{\sqrt{1-x^2}}I_1\Big(c_0
t\sqrt{1-x^2}\Big)\Big\}1_{[-1,1]}(x)
\]
and $\delta_1(dx)$ (resp. $\delta_{-1}(dx)$) is the Dirac measure at $1$ (resp. $-1$).\\
 In the particular case $p=1/2$, $x\to g(t,x)$ is an even function. G.H. Weiss
(\cite{weiss02} p.393) proved \eqref{der} using an analytic method based on 
Fourier-Laplace transform.
\end{rem}
\noindent
{\it Proof of Proposition \ref{bessel__*}.}
The proof is a direct consequence of the expression of Proposition \ref{loi__1}. 
Indeed, for each bounded continuous function $\varphi$ we denote
\[
\Delta=\E[\varphi(\Z_t)]=\varphi(t)e^{-c_0 t}+\sum_{k\ge 1}\Delta_{2k}(\varphi)
+\sum_{k\ge 0}\Delta_{2k+1}(\varphi)=\varphi(t)e^{-c_0 t}+\Delta_e+\Delta_o,
\]
where $\Delta_n(\varphi)=\E[\varphi(\Z_t)1_{\{ \N_t=n \}}]$. 
Using \eqref{pair__*} and \eqref{cond_pair} we get
\[
\Delta_e=e^{-\tau t}\int_{-t}^t \varphi(z)S_e(z)e^{\overline{\tau}z}dz,
\]
with
\begin{eqnarray*}
S_e(z)&=&\frac{1}{2}\sum_{k\ge 1}\frac{(c_0c_1)^k}{k!(k-1)!}
\Big( \frac{t-z}{2} \Big)^{k-1}\Big( \frac{t+z}{2} \Big)^{k}\\
&=&\frac{1}{2}\sqrt{c_0c_1}\sqrt{\frac{t+z}{t-z}}\sum_{k\ge 0}
\frac{1}{k!(k+1)!}\Big( \frac{\sqrt{c_0c_1(t^2-z^2)}}{2} \Big)^{2k+1}\\
&=&\frac{1}{2}\sqrt{c_0c_1}\sqrt{\frac{t+z}{t-z}}I_1\Big( \sqrt{c_0c_1(t^2-z^2)} \Big).
\end{eqnarray*}
For the odd indexes, by \eqref{impair__*} and \eqref{cond_impair} we get
\[
\Delta_o=e^{-\tau t}\int_{-t}^t \varphi(z)S_o(z)e^{\overline{\tau}z}dz,
\]
with
\[
S_o(z)=\frac{1}{2}\sum_{k\ge 0}\frac{c_0^{k+1}c_1^k}{(k!)^2}\Big( \frac{t^2-z^2}{4} \Big)^{k}
=\frac{c_0}{2}I_0\Big( \sqrt{c_0c_1(t^2-z^2)} \Big).
\]
\hfill{$\Box$}
\begin{prop}
\label{markk}
1) $(\Z_t,\N_t;\ t\ge 0)$ is a $\Rset\times\Nset$-valued Markov process.\\
2) Let $s\ge 0$ and $n\ge 0$. Conditionally on $\Z_s=x$ and
$\N_s=n$, $\Big((\Z_{t+s},\N_{t+s}), \, t\ge 0\Big)$ is distributed as
\begin{eqnarray*}
\left\{\begin{array}{ll}
\Big(\Big( x+\int_0^{t}(-1)^{\N_u}du, n+\N_{t} \Big),\ t\ge 0\Big)&\mbox{when $n$ is even},\\[8pt]
\Big(\Big( x-\int_0^{t}(-1)^{\n_u}du, n+\n_{t} \Big),\ t\ge 0\Big)&\mbox{otherwise}.
\end{array}\right.
\end{eqnarray*}
\end{prop}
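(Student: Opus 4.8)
The plan is to exploit the renewal structure of $\N$ together with the lack-of-memory property of the exponential distribution. Write $T_0=0$ and $T_k=\lambda_1 e_1+\cdots+\lambda_k e_k$ for the successive jump times of $\N$, so that by \eqref{**1.1} and \eqref{.*.} the increments $T_k-T_{k-1}$ are independent exponential variables with parameter $c_0$ when $k$ is odd and $c_1$ when $k$ is even; equivalently $\N$ jumps at rate $c_0$ from each even state and at rate $c_1$ from each odd state. The first and main step is an elementary decomposition at a fixed time $s\ge 0$. On the event $\{\N_s=n\}=\{T_n\le s<T_{n+1}\}$ the $\sigma$-algebra $\sigma(\N_u,\ u\le s)$ is generated by $T_1,\dots,T_n$, and by independence of the $e_k$'s together with the memoryless property, conditionally on $\sigma(\N_u,\ u\le s)$ and on $\{\N_s=n\}$: the residual time $T_{n+1}-s$ is exponential with parameter $c_0$ if $n$ is even (resp.\ $c_1$ if $n$ is odd); the later increments $T_{n+2}-T_{n+1},\,T_{n+3}-T_{n+2},\dots$ are independent exponentials with parameters $c_1,c_0,c_1,\dots$ (resp.\ $c_0,c_1,c_0,\dots$); and the whole post-$s$ sequence is independent of $\sigma(\N_u,\ u\le s)$. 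Comparing this with the construction \eqref{**1.1}--\eqref{.*.}, it says precisely that the shifted counting process $\widehat{N}_v:=\N_{s+v}-\N_s$, $v\ge 0$, is independent of $\sigma(\N_u,\ u\le s)$ and, conditionally on $\{\N_s=n\}$, has the law of $\N=N^{c_0,c_1}$ when $n$ is even and the law of $\n=N^{c_1,c_0}$ when $n$ is odd.

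The second step transfers this to the pair. From \eqref{def_1.2}, splitting the integral at $s$ and using $\N_{s+v}=\N_s+\widehat{N}_v$ (hence $(-1)^{\N_{s+v}}=(-1)^{\N_s}(-1)^{\widehat{N}_v}$), one gets
\[
\Z_{s+t}=\Z_s+(-1)^{\N_s}\int_0^t(-1)^{\widehat{N}_v}\,dv,\qquad \N_{s+t}=\N_s+\widehat{N}_t,\qquad t\ge 0.
\]
Therefore, on $\{\Z_s=x,\ \N_s=n\}$, the process $((\Z_{s+t},\N_{s+t}),\ t\ge 0)$ coincides with $((x+(-1)^n\int_0^t(-1)^{\widehat{N}_v}\,dv,\ n+\widehat{N}_t),\ t\ge 0)$. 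Inserting the conditional law of $\widehat{N}$ from the first step: when $n$ is even we have $(-1)^n=1$ and $\widehat{N}\deq\N$, so this process is distributed as $((x+\int_0^t(-1)^{\N_u}\,du,\ n+\N_t),\ t\ge 0)$; when $n$ is odd we have $(-1)^n=-1$ and $\widehat{N}\deq\n$, so it is distributed as $((x-\int_0^t(-1)^{\n_u}\,du,\ n+\n_t),\ t\ge 0)$. This is exactly assertion~2).

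Assertion~1) then follows because the conditional law just obtained depends on the past only through $(\Z_s,\N_s)$: for any bounded measurable path-functional $F$, conditioning on $\{\N_s=n\}$ and invoking the independence of $\widehat{N}$ from $\sigma(\N_u,\ u\le s)$ shows that $\E[F((\Z_{s+t},\N_{s+t})_{t\ge 0})\mid\sigma(\N_u,\ u\le s)]$ is a deterministic function of $(\Z_s,\N_s)$; since $\Z_u$ is $\sigma(\N_v,\ v\le u)$-measurable by \eqref{def_1.2}, the natural filtration of $(\Z,\N)$ is contained in $(\sigma(\N_u,\ u\le s))_{s\ge 0}$, whence the Markov property. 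I expect the only genuinely delicate point to be the residual-lifetime and conditional-independence claim of the first paragraph, best verified by conditioning on $\{T_1=t_1,\dots,T_n=t_n,\ T_{n+1}>s\}$ and using that the $e_k$ are i.i.d.\ exponential; the remainder is bookkeeping with the parity of $n$, and the reduction of $F$ to cylinder functionals is a routine monotone-class argument.
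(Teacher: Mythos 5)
Your proof is correct and follows essentially the same route as the paper: decompose $\Z_{s+t}=\Z_s+(-1)^{\N_s}\int_0^t(-1)^{\widehat N_v}dv$ and identify the law of the shifted increment process $\widehat N_v=\N_{s+v}-\N_s$ as that of $\N$ or $\n$ according to the parity of $\N_s$, via the memoryless property of the exponential inter-jump times. The paper states this identification without elaboration, whereas you spell out the residual-lifetime and conditional-independence details; this is a faithful (and more complete) rendering of the same argument.
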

\begin{rem}
Note that Propositions \ref{markk} and \ref{loi__1} permit to determine the semigroup of
$\Big((\Z_t,\N_t),\, t\ge 0\Big)$ i.e. $\P(\Z_t\in dx,\, \N_t=n\vert \Z_s=y,\,\N_s=m)$ 
where $t>s$, $n\ge m$ and $y\in[-s,s]$.
\end{rem}
\noindent {\it Proof of Proposition \ref{markk}}.
Let $t>s\ge 0$. Using \eqref{def_1.2} we get
\[
\Z_t=\Z_s+(-1)^{\N_s}\int_0^{t-s}(-1)^{\tilde{N}^s_u}du,
\]
where $\tilde{N}^s_u=\N_{s+u}-\N_s$, $u\ge 0$.\\
Note that $(\tilde{N}_u^{s};\ u\ge 0)\overset{(d)}{=}(\N_u;\ u\ge 0)$ if $\N_s\in 2\Nset$
and $(\tilde{N}_u^{s};\ u\ge 0)\overset{(d)}{=}(\n_u;\ u\ge 0)$ if $\N_s\in 2\Nset+1$.
This shows Proposition \ref{markk}.\hfill{$\Box$}\\[5pt]
Next, we determine (in Proposition \ref{trlp} below) the Laplace transform of the r.v. $\Z_t$. 
It is possible to use the distribution of $\Z_t$ (cf Proposition \ref{bessel__*}), 
but this method has the disadvantage of leading to heavy calculations. We develop 
here a method which uses the fact  that $(\Z_s;\ s\ge 0)$ is a stochastic process 
given by \eqref{def_1.2}. The key tool is Lemma \ref{essai_mark} below. Roughly 
speaking Lemma \ref{essai_mark} gives the generator of the Markov process 
$(\Z_t,\N_t)$. Lemma \ref{essai_mark} is an important ingredient in the proof 
of Proposition  \ref{telegraph**} besides.
\begin{lem}
\label{essai_mark} Let $F:\Rset\times\Nset\to\Rset$ denote a bounded and
continuous function such that $z\to F(z,n)$ is of class
$\mathcal{C}^1$ for all $n$. Then
\begin{eqnarray}
\label{eq:essai_mark} \frac{d}{dt}\E[ F(\Z_t,\N_t) ]&=\E\Big[&
\frac{\partial F}{\partial
z}(\Z_t,\N_t)(-1)^{\N_t}\Big]\nonumber \\
&+\E\Big[&\Big( F(\Z_t,\N_t+1)-F(\Z_t,\N_t) \Big)\nonumber\\
&&\times\Big( c_11_{\{ \N_t\in 2\Nset+1 \}}+c_01_{\{ \N_t\in
2\Nset \}} \Big) \Big].
\end{eqnarray}
\end{lem}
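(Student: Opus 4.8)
The plan is to recognise $(\Z_t,\N_t)_{t\ge 0}$ as a piecewise-deterministic Markov process and to read \eqref{eq:essai_mark} off its infinitesimal generator, the identity being then nothing but the associated forward equation $\frac{d}{dt}\E[F(\Z_t,\N_t)]=\E[\mathcal{L}F(\Z_t,\N_t)]$ with
\[
\mathcal{L}F(z,n)=(-1)^n\frac{\partial F}{\partial z}(z,n)+\Big(c_0 1_{\{n\in 2\Nset\}}+c_1 1_{\{n\in 2\Nset+1\}}\Big)\big(F(z,n+1)-F(z,n)\big).
\]
Indeed, by \eqref{def_1.2} the second coordinate is frozen between two consecutive jumps of $\N$ while the first one moves at the deterministic velocity $(-1)^{\N_t}$; and, by the lack of memory of the exponential law together with the alternating choice \eqref{.*.} of the $\lambda_k$ (equivalently, by Proposition \ref{markk}), starting from a state with $\N_t=n$ the process waits an independent exponential time with parameter $\mu_n:=c_0 1_{\{n\in 2\Nset\}}+c_1 1_{\{n\in 2\Nset+1\}}$ before $\N$ jumps to $n+1$, the first coordinate being continuous at that instant.

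Concretely, I would work with a small increment $h>0$. Fix $t$, set $\mathcal{F}_t:=\sigma(\N_s,\ s\le t)$ and condition on $\mathcal{F}_t$. On $\{\N_t=n\}$, Proposition \ref{markk} and memorylessness give: no jump of $\N$ on $(t,t+h]$ with probability $e^{-\mu_n h}=1-\mu_n h+O(h^2)$, in which case $\N_{t+h}=\N_t$ and $\Z_{t+h}=\Z_t+(-1)^{\N_t}h$; exactly one jump with probability $\mu_n h+O(h^2)$, in which case $\N_{t+h}=\N_t+1$ and $\Z_{t+h}=\Z_t+\theta$ with $|\theta|\le h$; and at least two jumps with probability $O(h^2)$. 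Expanding $z\mapsto F(z,n)$ to first order around $\Z_t$ and using that $F$ is bounded, this yields
\[
\E\big[F(\Z_{t+h},\N_{t+h})\mid\mathcal{F}_t\big]=F(\Z_t,\N_t)+h\,\mathcal{L}F(\Z_t,\N_t)+o(h).
\]
Taking expectations, subtracting $\E[F(\Z_t,\N_t)]$, dividing by $h$ and letting $h\downarrow 0$ produces \eqref{eq:essai_mark}; an analogous computation as $h\uparrow 0$, or the continuity in $t$ of the right-hand side of \eqref{eq:essai_mark}, shows that the left-hand side is genuinely differentiable. Alternatively one may first establish the integrated form $\E[F(\Z_t,\N_t)]=F(\Z_0,\N_0)+\int_0^t\E[\mathcal{L}F(\Z_s,\N_s)]\,ds$ by compensating the jumps of $\N$, and then differentiate.

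The only real work is the uniform control of the $o(h)$ remainder, so as to legitimately exchange the limit with $\E[\,\cdot\,]$ and with the summation over $n$; this is where the light tails of $\N_t$ enter. Bounding the jump intensity by $c:=\max(c_0,c_1)$, the increment $\N_{t+h}-\N_t$ is stochastically dominated by a Poisson random variable of parameter $ch$, so the "two or more jumps" event contributes at most $2\|F\|_\infty\,O(h^2)$; the first-order Taylor remainder of $z\mapsto\frac{\partial F}{\partial z}(z,n)$ is uniformly $o(h)$ on the compact interval $[-(t+1),t+1]$, which contains $\Z_s$ for every $s\le t+1$ since $|\Z_s|\le s$; and the series over $n$ converge because $\N_t$ has finite exponential moments of every order (again by Poisson domination), so that dominated convergence applies. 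Thus the main obstacle is not probabilistic — Proposition \ref{markk} already carries all the probabilistic content — but the bookkeeping needed to make these error estimates uniform over the unbounded range of $\N_t$.
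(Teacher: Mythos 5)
Your proof is correct and takes essentially the same route as the paper's: both compute the $h$-increment of $\E[F(\Z_t,\N_t)]$ using the Markov property of $(\Z_t,\N_t)$ (Proposition \ref{markk}) together with the small-time jump asymptotics $\P(\N_h=1)=c_0h+o(h)$ and $\P(\N_h\ge 2)=o(h)$ drawn from Proposition \ref{loi__1}. The only organizational difference is that the paper splits the increment into a drift part, handled exactly by writing $F(\Z_{t+h},\N_{t+h})-F(\Z_t,\N_{t+h})$ as $\int_t^{t+h}\frac{\partial F}{\partial z}(\Z_u,\N_{t+h})(-1)^{\N_u}du$, plus a pure-jump part, whereas you condition on the number of jumps in $(t,t+h]$ and Taylor-expand; your remarks on uniformity of the $o(h)$ remainders are in fact more careful than the paper's treatment.
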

\begin{proof}
Let us denote by $\Delta(t)=\E[F(\Z_t,\N_t)]$. In order to compute
the $t$-derivative we shall decompose the increment of $t\to\Delta(t)$ 
in a sum of two terms:
\[
\frac{\Delta(t+h)-\Delta(t)}{h}=B_h+C_h,
\]
with
\[
B_h=\frac{1}{h}\Big\{
\E[F(\Z_{t+h},\N_{t+h})]-\E[F(\Z_t,\N_{t+h})] \Big\},
\]
\[
C_h=\frac{1}{h}\Big\{ \E[F(\Z_{t},\N_{t+h})]-\E[F(\Z_t,\N_{t})]
\Big\}.
\]
Since $F(\cdot,n)$ is continuously differentiable with respect to the
variable $z$ and $t\to\Z_t$ is differentiable (cf \eqref{def_1.2}), 
using the change of variable formula we obtain
\[
\frac{1}{h}\Big\{ F(\Z_{t+h},\N_{t+h})-F(\Z_t,\N_{t+h})
\Big\}=\frac{1}{h}\int_t^{t+h}\frac{\partial F}{\partial
z}(\Z_u,\N_{t+h})(-1)^{\N_u}du.
\]
Therefore
\begin{equation}
\label{..1} \lim_{h\to 0}B_h=\E\Big[ \frac{\partial F}{\partial
z}(\Z_t,\N_t)(-1)^{\N_t} \Big].
\end{equation}
In order to study the limit of $C_h$, we consider two cases: $\N_t\in 2\Nset$ and $\N_t\in
2\Nset +1$:
\begin{eqnarray*}
C_h&=&\frac{1}{h}
\E\Big[\Big(F(\Z_{t},\N_{t}+\tilde{N}_h^{c_1,c_0})-F(\Z_t,\N_{t})\Big)1_{\{
\N_t\in 2\Nset +1
\}}\Big]\\
&+&\frac{1}{h}
\E\Big[\Big(F(\Z_{t},\N_{t}+\tilde{N}_h^{c_0,c_1})-F(\Z_t,\N_{t})\Big)1_{\{
\N_t\in 2\Nset \}}\Big],
\end{eqnarray*}
where $\tilde{N}_h=\N_{t+h}-\N_t$.\\
According to Proposition \ref{markk}, conditionally on $\Z_t$ and $\N_t\in 2\Nset$ 
(resp. $\N_t\in 2\Nset+1$), $\tilde{N}_h$ is distributed as $\N_h$ (resp. $\n_h$). 
Note that Proposition \ref{loi__1} implies that $\P(\N_h\ge 2)=o(h)$ and
\[
\P(N_h^{c_0,c_1}=1)=\frac{c_0}{2}\Big(\frac{e^{\overline{\tau} h}-e^{-\overline{\tau} h}}{\overline{\tau}}\Big)e^{-\tau h}=c_0 h+o(h).
\]
Consequently
\begin{eqnarray}
\label{..2} \lim_{h\to 0}
C_h&=&c_1\E\Big[\Big(F(\Z_{t},\N_{t}+1)-F(\Z_t,\N_{t})\Big)1_{\{ \N_t\in
2\Nset +1 \}}\Big]\nonumber\\
&+&c_0\E\Big[\Big(F(\Z_{t},\N_{t}+1)-F(\Z_t,\N_{t})\Big)1_{\{ \N_t\in 2\Nset
\}}\Big].
\end{eqnarray}
Then, \eqref{..1} and \eqref{..2} clearly imply
Lemma \ref{essai_mark}.
\end{proof}
Let us introduce the two quantities:
\begin{equation}
\label{lapp}
L_e(t)=\E\Big[ e^{-\mu \Z_t}1_{\{ \N_t\in 2\Nset \} }
\Big]\ \mbox{and}\ L_o(t)=\E\Big[ e^{-\mu \Z_t}1_{\{ \N_t\in
2\Nset+1 \} } \Big],\ (t\ge 0,\mu\in\Rset).
\end{equation}
Since $\vert \Z_t\vert\le t$, then $L_e(t)$ and $L_o(t)$ are well defined for 
any $\mu\in\Rset$. Note that $\mu\to L_e(t)$ (resp. $\mu\to L_o(t)$) is a Laplace 
transform. We have mentioned the $t$-dependency only because it will play an 
important role in our proof of Proposition \ref{trlp} below.
\begin{prop}
\label{trlp} Let $L_e(t)$ and $L_o(t)$ be defined by \eqref{lapp}. Then
\begin{equation}
\label{nn*} L_e(t)=\frac{1}{\sqrt{\mathcal{E}}}\left(
(-\mu+\overline{\tau})\sinh(t\sqrt{\mathcal{E}})+\sqrt{\mathcal{E}}\cosh(t\sqrt{\mathcal{E}})
\right)e^{-\tau t},
\end{equation}
\begin{equation}
\label{nn**}
L_o(t)=\frac{c_0}{\sqrt{\mathcal{E}}}\sinh(t\sqrt{\mathcal{E}})e^{-\tau
t},
\end{equation}
\begin{equation}
\label{resum**} \E[e^{-\mu\Z_t}]=\frac{1}{\sqrt{\mathcal{E}}}\Big[
(-\mu+\tau)\sinh(t\sqrt{\mathcal{E}})+\sqrt{\mathcal{E}}\cosh(t\sqrt{\mathcal{E}})
\Big]e^{-\tau t},
\end{equation}
where $\mathcal{E}=\mu^2-2\overline{\tau}\mu+\tau^2$.
\end{prop}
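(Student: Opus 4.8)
The plan is to extract from Lemma \ref{essai_mark} a closed linear system of ordinary differential equations for the pair $(L_e,L_o)$, integrate it, and then add the two components to get $\E[e^{-\mu\Z_t}]$. I would apply \eqref{eq:essai_mark} to the two functions $F_e(z,n)=e^{-\mu z}\I_{\{n\in 2\Nset\}}$ and $F_o(z,n)=e^{-\mu z}\I_{\{n\in 2\Nset+1\}}$. These $F$'s are not bounded on $\Rset\times\Nset$, but since $\abs{\Z_s}\le s$ for all $s$ one may either multiply $e^{-\mu z}$ by a smooth cut-off equal to $1$ on a large interval $[-T,T]$ and work on the horizon $[0,T]$, or simply note that the proof of Lemma \ref{essai_mark} only ever evaluates $F$ at points $\Z_u\in[-u,u]$ and therefore goes through verbatim; I would record this remark first.

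For $F_e$ one has $\partial_z F_e(z,n)=-\mu e^{-\mu z}\I_{\{n\in 2\Nset\}}$ and $F_e(z,n+1)-F_e(z,n)=e^{-\mu z}\big(\I_{\{n\in 2\Nset+1\}}-\I_{\{n\in 2\Nset\}}\big)$. Using $(-1)^{\N_t}=1$ on $\{\N_t\in 2\Nset\}$, and that the coefficient in \eqref{eq:essai_mark} is $c_0$ on $\{\N_t\in 2\Nset\}$ and $c_1$ on $\{\N_t\in 2\Nset+1\}$, the identity \eqref{eq:essai_mark} collapses to
\[
L_e'(t)=-(\mu+c_0)\,L_e(t)+c_1\,L_o(t).
\]
The analogous computation with $F_o$, using now $(-1)^{\N_t}=-1$ on $\{\N_t\in 2\Nset+1\}$, gives
\[
L_o'(t)=c_0\,L_e(t)+(\mu-c_1)\,L_o(t).
\]
Since $\N_0=0$ and $\Z_0=0$, the initial data are $L_e(0)=1$ and $L_o(0)=0$.

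It then remains to integrate this constant-coefficient $2\times2$ system. Its matrix has trace $-(c_0+c_1)=-2\tau$ and determinant $2\overline{\tau}\mu-\mu^2$, so its eigenvalues are the two roots $-\tau\pm\sqrt{\mathcal{E}}$ of $\lambda^2+2\tau\lambda+2\overline{\tau}\mu-\mu^2=0$, with $\mathcal{E}=\mu^2-2\overline{\tau}\mu+\tau^2=(\mu-\overline{\tau})^2+c_0c_1$. Hence each of $L_e$ and $L_o$ is of the form $e^{-\tau t}\big(a\cosh(t\sqrt{\mathcal{E}})+b\sinh(t\sqrt{\mathcal{E}})\big)$; the coefficient $a$ is read off from the value at $t=0$ and the coefficient $b$ from the derivative at $t=0$, which the two ODEs give as $L_e'(0)=-(\mu+c_0)$ and $L_o'(0)=c_0$. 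Substituting $c_0=\tau-\overline{\tau}$ one gets $a=1$, $b\sqrt{\mathcal{E}}=-\mu+\overline{\tau}$ for $L_e$ and $a=0$, $b\sqrt{\mathcal{E}}=c_0$ for $L_o$, which are precisely \eqref{nn*} and \eqref{nn**}. Adding the two and using $-\mu+\overline{\tau}+c_0=-\mu+\tau$ produces \eqref{resum**}. (If $\mathcal{E}\le 0$ for some real $\mu$, one reads $\cosh(t\sqrt{\mathcal{E}})=\sum_{m\ge0}t^{2m}\mathcal{E}^m/(2m)!$ and $\sinh(t\sqrt{\mathcal{E}})/\sqrt{\mathcal{E}}=\sum_{m\ge0}t^{2m+1}\mathcal{E}^m/(2m+1)!$ as entire functions of $\mathcal{E}$, so the formulas stay meaningful; in the regime $c_0,c_1>0$ one has $\mathcal{E}\ge c_0c_1>0$ anyway.)

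The only step that calls for any care is justifying the use of Lemma \ref{essai_mark} with the unbounded exponential $F$; after that the argument is the elementary solution of a linear ODE plus the bookkeeping $c_0=\tau-\overline{\tau}$, $c_1=\tau+\overline{\tau}$, and I foresee no real obstacle. One could instead integrate the explicit density of Proposition \ref{bessel__*}, but, as already remarked, that route is computationally heavier, so I would keep the ODE approach built on Lemma \ref{essai_mark}.
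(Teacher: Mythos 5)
Your proposal is correct and follows essentially the same route as the paper: apply Lemma \ref{essai_mark} to $F(z,n)=e^{-\mu z}\mathbf{1}_{\{n\in 2\Nset\}}$ (and its odd counterpart), obtain the constant-coefficient $2\times 2$ system with eigenvalues $-\tau\pm\sqrt{\mathcal{E}}$, and fix the constants from the initial data $L_e(0)=1$, $L_o(0)=0$, $L_e'(0)=-\mu-c_0$, $L_o'(0)=c_0$. Your remark on the unboundedness of $e^{-\mu z}$ (handled via $|\Z_t|\le t$) is a small point of extra care the paper passes over silently; otherwise the two arguments coincide, with your $\cosh/\sinh$ form equivalent to the paper's $a_\pm e^{\lambda_\pm t}$ decomposition.
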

\begin{proof}
Applying Lemma \ref{essai_mark} with the particular function
$F(z,n)=e^{-\mu z}1_{\{n\in 2\Nset \}}$, we have:
\begin{eqnarray*}
\frac{d}{dt}L_e(t)&=&-\mu\E\Big[
e^{-\mu\Z_t}(-1)^{\N_t}1_{\{\N_t\in
2\Nset\}} \Big]\\
&+&E\Big[ e^{-\mu\Z_t}\Big( 1_{\{ \N_t\in 2\Nset+1 \}}-1_{\{
\N_t\in 2\Nset \}} \Big)\\&&\times \Big(c_1 1_{\{ \N_t\in 2\Nset+1
\}}+c_01_{\{ \N_t\in 2\Nset \}} \Big)\Big]
\end{eqnarray*}
We deduce
\[
\frac{d}{dt}L_e(t)=-(\mu+c_0) L_e(t)+c_1L_o(t).
\]
Similarly
\begin{eqnarray*}
\frac{d}{dt}L_o(t)&=&-\mu\E\Big[
e^{-\mu\Z_t}(-1)^{\N_t}1_{\{\N_t\in
2\Nset+1\}} \Big]\\
&+&E\Big[ e^{-\mu\Z_t}\Big( 1_{\{ \N_t\in 2\Nset \}}-1_{\{ \N_t\in
2\Nset+1 \}} \Big)\\&&\times \Big(c_1 1_{\{ \N_t\in 2\Nset+1
\}}+c_01_{\{ \N_t\in 2\Nset \}} \Big)\Big].
\end{eqnarray*}
We get therefore
\[
\frac{d}{dt}L_o(t)=(\mu-c_1) L_o(t)+c_0L_e(t).
\]
 To sum up
 \begin{eqnarray*}
 \frac{d}{dt}\left(\begin{array}{l}L_e(t)\\
 L_o(t)\end{array}\right)=\left(\begin{array}{cc}-\mu-c_0 & c_1\\
 c_0 & \mu-c_1\end{array}\right)\left(\begin{array}{l}L_e(t)\\
 L_o(t)\end{array}\right).
 \end{eqnarray*}
 We deduce the expressions of $L_e(t)$ and
 $L_o(t)$:
 \begin{equation}
 \label{eta1}
 L_e(t)=a_+e^{\lambda_+ t}+a_-e^{\lambda_-t}\quad\quad L_o(t)=b_+e^{\lambda_+
 t}+b_-e^{\lambda_-t},
 \end{equation}
 where $\lambda_\pm=-\tau\pm\sqrt{\mu^2-2\overline{\tau}\mu+\tau^2}=
 -\tau\pm\sqrt{\mathcal{E}}$.\\
 The constants $a_\pm$ and $b_\pm$ are evaluated with the initial conditions:
 \[
 L_e(0)=\P(\N_0\in 2\Nset)=1,\quad\quad L_o(0)=\P(\N_0\in
 2\Nset+1)=0,
 \]
 \[
 \frac{dL_e}{dt}(0)=-(\mu+c_0)L_e(0)+c_1L_o(0)=-\mu-c_0,
 \]
 \[
 \frac{dL_o}{dt}(0)=(\mu-c_1)L_o(0)+c_0L_e(0)=c_0.
 \]
 We obtain
 \begin{equation}
 \label{eta2}
 a_+=\frac{1}{2\sqrt{\mathcal{E}}}(-\mu+\overline{\tau}+\sqrt{\mathcal{E}})\quad\mbox{and}\quad
 a_-=\frac{1}{2\sqrt{\mathcal{E}}}(\mu-\overline{\tau}+\sqrt{\mathcal{E}}),
 \end{equation}
 \begin{equation}
 \label{eta3}
 b_+=\frac{c_0}{2\sqrt{\mathcal{E}}}\quad\mbox{and}\quad b_-=-\frac{c_0}{2\sqrt{\mathcal{E}}}
 \end{equation}
 Using \eqref{eta1}, \eqref{eta2} and \eqref{eta3}, Proposition \ref{trlp} follows.
\end{proof}
It is easy to deduce two direct consequences of Proposition \ref{trlp}. 
First, taking $\mu=0$ we obtain $\P(\N_t\in 2\Nset)$ and $\P(\N_t\in 2\Nset+1)$. 
Second, taking the expectation in \eqref{def_1.2} we get the mean of $\Z_t$.
\begin{cor} We have:
\[
\P(\N_t\in 2\Nset)=\frac{1}{\tau}\Big[ \overline{\tau}\sinh(\tau
t)+\tau\cosh(\tau t) \Big]e^{-\tau t},
\]
\[
\P(\N_t\in 2\Nset+1)=\frac{c_0}{\tau} \sinh(\tau t)e^{-\tau t},
\]
and
\[
\E[\Z_t]=\frac{\overline{\tau}}{\tau}t+\frac{c_0}{2\tau^2}(1-e^{-2\tau
t}).
\]
\end{cor}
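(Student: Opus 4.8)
The plan is to read all three identities off Proposition~\ref{trlp} by specialising the Laplace transforms at $\mu=0$. First I would observe that, directly from the definition \eqref{lapp}, setting $\mu=0$ replaces $e^{-\mu\Z_t}$ by $1$, so that $L_e(t)=\P(\N_t\in 2\Nset)$ and $L_o(t)=\P(\N_t\in 2\Nset+1)$. Simultaneously $\mathcal{E}=\mu^2-2\overline{\tau}\mu+\tau^2$ reduces to $\tau^2$, and since $c_0,c_1>0$ we have $\tau=(c_0+c_1)/2>0$, hence $\sqrt{\mathcal{E}}=\tau$. Substituting this into \eqref{nn*} and \eqref{nn**} gives the first two formulas immediately.

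For $\E[\Z_t]$ I would use the route suggested in the text: take the expectation in \eqref{def_1.2}. Since $\vert(-1)^{\N_u}\vert=1$ the integrand is bounded, so Fubini's theorem lets us exchange expectation and integral, yielding
\[
\E[\Z_t]=\int_0^t\E\big[(-1)^{\N_u}\big]\,du=\int_0^t\big(\P(\N_u\in 2\Nset)-\P(\N_u\in 2\Nset+1)\big)\,du.
\]
Plugging in the two expressions just obtained and using the identity $c_0=\tau-\overline{\tau}$ (immediate from $\tau=(c_0+c_1)/2$ and $\overline{\tau}=(c_1-c_0)/2$), the integrand should collapse, via the elementary rearrangements $\cosh(\tau u)-\sinh(\tau u)=e^{-\tau u}$ and $\sinh(\tau u)\,e^{-\tau u}=(1-e^{-2\tau u})/2$, to the simple form $\overline{\tau}/\tau+(c_0/\tau)e^{-2\tau u}$. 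Integrating over $[0,t]$ then produces $\frac{\overline{\tau}}{\tau}\,t+\frac{c_0}{2\tau^2}\big(1-e^{-2\tau t}\big)$, the third identity.

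A fully equivalent alternative would be to differentiate the Laplace transform \eqref{resum**} in $\mu$ and use $\E[\Z_t]=-\partial_\mu\E[e^{-\mu\Z_t}]\big|_{\mu=0}$; this is correct but heavier, since one has to differentiate $\sqrt{\mathcal{E}}$ together with the hyperbolic functions. In any case there is no genuine difficulty here --- the statement is a corollary of Proposition~\ref{trlp} --- and the only step deserving some attention is the hyperbolic-function bookkeeping showing that the integrand for $\E[\Z_t]$ simplifies to $\overline{\tau}/\tau+(c_0/\tau)e^{-2\tau u}$; everything else is routine substitution.
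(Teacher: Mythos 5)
Your proposal is correct and follows exactly the route the paper indicates: specialising $L_e$ and $L_o$ at $\mu=0$ (where $\mathcal{E}=\tau^2$) for the two probabilities, and taking expectation in \eqref{def_1.2} together with Fubini for the mean. The hyperbolic-function simplification you describe does indeed reduce the integrand to $\overline{\tau}/\tau+(c_0/\tau)e^{-2\tau u}$, and integrating gives the stated formula.
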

\begin{rem}\label{remark1} The Laplace transform with respect to the time 
variable can also be explicitly computed. We define $F(\mu,s)=\int_0^\infty
e^{-st}\E[e^{-\mu \Z_t}]dt$. Integrating \eqref{resum**} with respect to $dt$ 
we get
\begin{eqnarray*}
F(\mu,s)&=&\frac{1}{2\sqrt{\mathcal{E}}}\Big(\sqrt{\mathcal{E}}+(-\mu+\tau)
\Big)\frac{1}{s-\sqrt{\mathcal{E}}+\tau}+\frac{1}{2\sqrt{\mathcal{E}}}
\Big(\sqrt{\mathcal{E}}-(-\mu+\tau)\Big)\frac{1}{s+\sqrt{\mathcal{E}}+\tau}\\
&=&\frac{(\sqrt{\mathcal{E}}-\mu+\tau)(s+\sqrt{\mathcal{E}}+\tau)+
(\sqrt{\mathcal{E}}+\mu-\tau)(s-\sqrt{\mathcal{E}}+\tau)}{2\sqrt{\mathcal{E}}
((s+\tau)^2-\mathcal{E})}\\
&=&\frac{2s\sqrt{\mathcal{E}}+4\tau\sqrt{\mathcal{E}}-2\mu
\sqrt{\mathcal{E}}}{2\sqrt{\mathcal{E}}((s+\tau)^2-\mathcal{E})}=
\frac{ s+ 2\tau-\mu}{(s+\tau)^2-\mathcal{E}}
\end{eqnarray*}
In the symmetric case, $\mathcal{E}$ equals $\mu^2+c_0^2$, then
\begin{equation}
\label{...}
F(\mu,s)=\frac{s+ 2c_0-\mu }{s^2+2sc_0-\mu^2}.
\end{equation}
Let $(Z_t)$ be the symmetrization of $(Z^{c_0}_t)$ which is defined by an 
initial randomization:
\[
Z_t=\epsilon Z_t^{c_0},\quad t\ge 0,
\]
where $\epsilon$ is independent of $Z_t^{c_0}$ and $\P(\epsilon=\pm 1)=1/2$.\\
Relation \eqref{...} implies
\[
\int_0^\infty e^{-st}\E[e^{-\mu Z_t}]dt=\frac{s+ 2c_0}{s^2+2sc_0-\mu^2}.
\]
This identity has been obtained by Weiss in \cite{weiss02}.
\end{rem}
Let us now present a link between the ITN process and the
telegraph
equation in the particular symmetric case $c_0=c_1=c>0$. Recall that $(N_t^c)$ 
is a Poisson process with parameter $c$.\\
Let $f:\Rset\to\Rset$ be a function of class $\mathcal{C}^2$ whose
first and second derivatives are bounded. We define
\[
u(x,t)=\frac{1}{2}\Big\{f(x+at)+f(x-at)\Big\},\quad x\in\Rset,\ t\ge 0.
\]
Then (cf \cite{eckstein00}) $u$ is the unique solution of the wave equation
\begin{eqnarray*}
\left\{\begin{array}{l}
\displaystyle\frac{\partial^2 u}{\partial t^2}=a^2\frac{\partial^2 u}{\partial x^2},\\
\displaystyle u(x,0)=f(x),\quad \frac{\partial u}{\partial
t}(x,0)=0.
       \end{array}\right.
\end{eqnarray*}
\begin{prop}
\label{telegraph**} The function
\[
w(x,t)=\E\Big[ u\Big(x,\int_0^t (-1)^{N^c_s}ds\Big)\Big],\quad
(x\in\Rset, t\ge 0)
\]
 is the
solution of the telegraph equation (TE)
\begin{eqnarray*}
\left\{\begin{array}{l}
\displaystyle\frac{\partial^2 w}{\partial t^2}+2c\frac{\partial w}{\partial t}=
a^2\frac{\partial^2 w}{\partial x^2},\\[8pt]
\displaystyle w(x,0)=f(x),\quad \frac{\partial w}{\partial
t}(x,0)=0.
       \end{array}\right.
\end{eqnarray*}
\end{prop}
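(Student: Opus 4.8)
The plan is to compute $w$ by conditioning on the Poisson process and to use the representation of $(Z^c_t)$ from \eqref{def_1.2} together with the generator identity of Lemma~\ref{essai_mark}. Write
\[
w(x,t)=\E\Big[u\big(x,\textstyle\int_0^t(-1)^{N^c_s}ds\big)\Big]=\E\big[u(x,Z^c_t)\big].
\]
The two initial conditions are immediate: since $Z^c_0=0$ we get $w(x,0)=u(x,0)=f(x)$, and because $\frac{\partial u}{\partial t}(x,0)=0$ together with $|Z^c_t|\le t$ one checks $\frac{\partial w}{\partial t}(x,0)=0$ (differentiate under the expectation, using that $u$ has bounded derivatives so the interchange is justified, and that the ``drift'' $(-1)^{N^c_s}$ has modulus $1$). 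So the heart of the matter is the PDE itself.

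For the PDE I would apply Lemma~\ref{essai_mark} in the symmetric case $c_0=c_1=c$ (so $\overline\tau=0$, $\tau=c$) to the function $F(z,n)=g(z)\mathbf 1_{\{n\in 2\Nset\}}+\tilde g(z)\mathbf 1_{\{n\in 2\Nset+1\}}$; concretely, set $v_e(x,t)=\E[u(x,Z^c_t)\mathbf 1_{\{N^c_t\text{ even}\}}]$ and $v_o(x,t)=\E[u(x,Z^c_t)\mathbf 1_{\{N^c_t\text{ odd}\}}]$, so $w=v_e+v_o$. Lemma~\ref{essai_mark}, applied with $z\mapsto u(x,z)$ for fixed $x$ (legitimate since $u(x,\cdot)\in\mathcal C^1$ with bounded derivative), gives the coupled first-order system
\begin{align*}
\frac{\partial v_e}{\partial t}&=\E\Big[a\,\partial_2 u(x,Z^c_t)\,(-1)^{N^c_t}\mathbf 1_{\{N^c_t\text{ even}\}}\Big]-c\,v_e+c\,v_o,\\
\frac{\partial v_o}{\partial t}&=-\E\Big[a\,\partial_2 u(x,Z^c_t)\,(-1)^{N^c_t}\mathbf 1_{\{N^c_t\text{ odd}\}}\Big]+c\,v_e-c\,v_o,
\end{align*}
where $\partial_2 u$ denotes the derivative of $u$ in its time slot and the factor $a$ comes from the chain rule $\frac{d}{dt}u(x,Z^c_t)=a\,\partial_2 u\cdot\frac{1}{a}(-1)^{N^c_t}$ — I'll keep track of the scaling carefully, rewriting $u(x,z)=\frac12\{f(x+az)+f(x-az)\}$ and noting $a\,\partial_2 u(x,z)=\partial_x u(x,z)$ via the d'Alembert form, which is exactly what makes the wave equation enter. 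Adding the two equations, $\frac{\partial w}{\partial t}=\partial_x\E[u(x,Z^c_t)(-1)^{N^c_t}\,\operatorname{sign}]\,$-type term; more precisely one is led to introduce the auxiliary function $h(x,t)=\E[\,a\,\partial_2 u(x,Z^c_t)(-1)^{N^c_t}\,]=\E[\partial_x u(x,Z^c_t)\,\operatorname{sign}\,]$ and obtain $\frac{\partial w}{\partial t}$ in terms of it.

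The second differentiation is where the telegraph structure appears: differentiating $\frac{\partial w}{\partial t}$ once more in $t$, apply Lemma~\ref{essai_mark} a second time to the process $(-1)^{N^c_t}\partial_2 u(x,Z^c_t)$-weighted functionals. Here the jump term of the generator flips $(-1)^{N^c_t}\to-(-1)^{N^c_t}$ at rate $c$, which produces the damping coefficient: the jump part contributes $-2c\,\frac{\partial w}{\partial t}$, while the transport part contributes the second spatial derivative $a^2\partial_x^2 w$ (because $\frac{d}{dt}$ acting again on $\partial_2 u(x,Z^c_t)$ via the chain rule yields $\partial_2^2 u\cdot\frac1a(-1)^{N^c_t}$, and $(-1)^{2N^c_t}=1$, and $a^2\partial_2^2 u=a^2\partial_x^2 u$ from the wave equation for $u$). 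Assembling these pieces yields $\frac{\partial^2 w}{\partial t^2}+2c\,\frac{\partial w}{\partial t}=a^2\frac{\partial^2 w}{\partial x^2}$.

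The main obstacle will be bookkeeping: justifying the interchanges of $\partial_x$, $\partial_t$ and $\E$ (all fine because $f',f''$ are bounded and $|Z^c_t|\le t$, so everything is dominated on compact $t$-intervals), and — more delicately — correctly tracking the parity indicators through \emph{two} applications of Lemma~\ref{essai_mark} so that the jump terms combine to the single coefficient $2c$ rather than cancelling. I would handle this by carrying the pair $(v_e,v_o)$ (and a second pair for the $\partial_2u$-weighted functionals) throughout, only summing at the very end; the $2\times2$ matrix structure, exactly as in the proof of Proposition~\ref{trlp}, makes the bookkeeping mechanical. Uniqueness of the solution of (TE) with the given Cauchy data is classical (finite speed of propagation / energy estimate), so exhibiting $w$ as \emph{a} solution suffices.
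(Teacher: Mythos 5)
Your proposal is correct and follows essentially the same route as the paper: two applications of Lemma \ref{essai_mark}, first to $(z,n)\mapsto u(x,z)$ and then to $(z,n)\mapsto \frac{\partial u}{\partial t}(x,z)(-1)^n$, with the jump part of the generator producing the damping term $-2c\,\frac{\partial w}{\partial t}$ and the wave equation converting $\frac{\partial^2 u}{\partial t^2}$ into $a^2\frac{\partial^2 u}{\partial x^2}$ (your even/odd splitting into $v_e,v_o$ is harmless but unnecessary here, since $c_0=c_1=c$ makes the jump rate parity-independent). Before writing it up, correct the scaling asides: the chain rule gives $\frac{d}{dt}u(x,Z^c_t)=\frac{\partial u}{\partial t}(x,Z^c_t)(-1)^{N^c_t}$ with no factor $1/a$, the identity $a\,\frac{\partial u}{\partial t}=\frac{\partial u}{\partial x}$ is false for the d'Alembert solution (only the second-order identity $\frac{\partial^2 u}{\partial t^2}=a^2\frac{\partial^2 u}{\partial x^2}$ holds), and it is this second-order identity, not the first-order one, that your final assembly actually uses --- which is why the conclusion nevertheless comes out right.
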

This result can be proved using asymptotic analysis applied to
difference equation associated with the persistent random walk
\cite{goldstein50} or using Fourier transforms \cite{weiss02}.
Here we shall present a new proof.\\
{\it Proof of Proposition \ref{telegraph**}.} 
Applying twice Lemma \ref{essai_mark} to $(z,n)\to u(x,z)$ and 
$(z,n)\to \frac{\partial u}{\partial t}(x,z)(-1)^n$ we obtain:
\[
\frac{\partial w}{\partial t}(x,t)=\E\Big[ \frac{\partial u}{\partial t}
\Big(x,\int_0^t (-1)^{N^c_s}ds\Big) (-1)^{N^c_t} \Big].
\]
and
\[
\frac{\partial^2 w}{\partial t^2}(x,t)=\E\Big[  \frac{\partial^2 u}{\partial t^2}
\Big(x,\int_0^t (-1)^{N^c_s}ds\Big)\Big]-2c\E\Big[  \frac{\partial u}{\partial t}
\Big(x,\int_0^t (-1)^{N^c_s}ds\Big) (-1)^{N^c_t} \Big].
\]
Since $u$ solves the wave equation we have
\[
\frac{\partial^2 w}{\partial t^2}(x,t)=a^2\frac{\partial^2 w}{\partial x^2}(x,t)
-2c\frac{\partial w}{\partial t}(x,t).
\]
The function $w$ is actually the solution of the telegraph
equation. It is easy to prove that $w$ satisfies the boundary
conditions.\hfill{$\Box$}\\[5pt]

Let us note that Proposition \ref{telegraph**} can be extended to the asymmetric 
case $c_0\neq c_1$. In this general case the telegraph equation is replaced by a 
linear system of partial differential equations.
\begin{rem}
\label{telegraph_gen} 
1) In \cite{eckstein00}, \cite{griego71}, an extension of Proposition \ref{telegraph**} 
has been proved. Let $A$ be the generator of a strongly continuous group of 
bounded linear operators on a Banach space. If $w$ is the unique solution of 
this abstract "wave equation":
\[
\frac{\partial^2 w}{\partial t^2}=A^2 w;\ w(\cdot,0)=f,\ 
\frac{\partial w}{\partial t}(\cdot,0)=Ag\quad (f,g\in\mathcal{D}(A))
\]
then $u(x,t)=\E\Big[ w\Big(x,\int_0^t (-1)^{N^c_s}ds\Big) \Big]$ 
solves the abstract "telegraph equation":
\[
\frac{\partial^2 u}{\partial t^2}=A^2 u-2c\frac{\partial u}{\partial t},\ 
\ u(\cdot, 0)=f,\ \ \frac{\partial u}{\partial t}(\cdot, 0)=Ag.
\]
2) In the same vein as \cite{griego71}, Enriquez \cite{enriquez07} has introduced processes with jumps to represent solutions of some linear differential equations and biharmonic equations in the presence of a potential term. Moreover useful references are given in \cite{enriquez07}.\\
3) It is easy to deduce from Lemma \ref{essai_mark} that the functions
\[
w_e(x,t)=\E\Big[ u\Big(x,\int_0^t (-1)^{N^{c_0,c_1}_s}ds\Big)
1_{\{N^{c_0,c_1}_t\in 2\Nset \}}\Big],\quad
(x\in\Rset, t\ge 0)
\]
\[
w_o(x,t)=\E\Big[ u\Big(x,\int_0^t (-1)^{N^{c_0,c_1}_s}ds\Big)
1_{\{N^{c_0,c_1}_t\in 2\Nset+1 \}}\Big],\quad
(x\in\Rset, t\ge 0)
\]
are
solutions of the general telegraph system (TS)
\begin{eqnarray*}
\left\{\begin{array}{l}
\displaystyle\frac{\partial^2 w_e}{\partial t^2}
=(c_0c_1-c_0^2)w_e+(c_0c_1-c_1^2)w_o-2c_0\frac{\partial w_e}{\partial t}
+a^2\frac{\partial^2 w_e}{\partial x^2},\quad \\[8pt]
\displaystyle\frac{\partial^2 w_o}{\partial t^2}
=(c_0c_1-c_0^2)w_e+(c_0c_1-c_1^2)w_o-2c_1\frac{\partial w_o}{\partial t}
+a^2\frac{\partial^2 w_o}{\partial x^2},\quad \\[8pt]

\displaystyle w_e(x,0)=f(x),\quad w_o(x,0)=0\quad \frac{\partial w_e}{\partial
t}(x,0)=-c_0 f(x)\quad \frac{\partial w_o}{\partial
t}(x,0)=c_0f(x).
       \end{array}\right.
\end{eqnarray*}
\end{rem}
{\color{blue}
\ifthenelse{\boolean{complet}}{
\begin{proof}Let us consider the increment: $I_e(h)=w_e(x,t+h)-w_e(x,t)$. Then
\[
I_e(h)=\E\Big[ u\Big(x,\int_0^{t+h}(-1)^{N_s}ds\Big)1_{\{N_{t+h}\in 2\Nset \}}
- u\Big(x,\int_0^{t}(-1)^{N_s}ds\Big)1_{\{N_{t}\in 2\Nset \}}\Big]
\]
\begin{eqnarray*}
I_e(h)&=&\E\Big[ \Big\{ u\Big(x,\int_0^{t}(-1)^{N_s}ds\Big)+\int_t^{t+h}
\frac{\partial u}{\partial t}\Big( x,\int_0^s (-1)^{N_u}du \Big)(-1)^{N_s}ds \Big\} 
1_{\{ N_{t+h}\in 2\Nset \}}\Big]\\
&-&E\Big[u\Big(x,\int_0^{t}(-1)^{N_s}ds\Big)1_{\{N_{t}\in 2\Nset \}}\Big]\\
&=&A_h+B_h+C_h,
\end{eqnarray*}
where
\[
A_h=\E\Big[ u\Big(x,\int_0^{t}(-1)^{N_s}ds\Big)\Big(1_{\{ N_{t+h}\in 2\Nset \}
\cap \{N_t\in 2\Nset\}}-1_{\{ N_t\in 2\Nset \}}\Big)\Big]
\]
\[
B_h=\E\Big[ u\Big(x,\int_0^{t}(-1)^{N_s}ds\Big)1_{\{ N_{t+h}\in 2\Nset \}
\cap \{N_t\in 2\Nset+1\}}\Big]
\]
\[
C_h=\E\Big[ \int_t^{t+h}\frac{\partial u}{\partial t}
\Big( x,\int_0^s (-1)^{N_u}du \Big)(-1)^{N_s}ds 1_{\{ N_{t+h}\in 2\Nset \}} \Big].
\]
Using the right-continuity of $N_t$ we deduce:
\[
\lim_{h\to 0}\frac{1}{h}C_h=\E\Big[ \frac{\partial u}{\partial t}
\Big( x , \int_0^t (-1)^{N_s}ds\Big) 1_{\{ N_t\in 2\Nset \}} \Big].
\]
\begin{eqnarray*}
A_h&=&\E\Big[ u\Big(x,\int_0^{t}(-1)^{N_s}ds\Big)1_{\{N_t\in 2\Nset\}}\Big]
\Big(\P(N_{t+h}\in 2\Nset\vert N_t\in 2\Nset ) -1\Big)\\
&=&w_e(x,t)(-hc_0+o(h)).
\end{eqnarray*}
By similar arguments, we get
\[
B_h=w_o(x,t)\P(N_{t+h}\in 2\Nset \vert N_t\in 2\Nset+1)=w_o(x,t)(hc_1+o(h))
\]
We deduce
\begin{equation}
\label{gen_deriv}
\frac{\partial w_e}{\partial t}(x,t)=-c_0w_e(x,t)+c_1w_o(x,t)+\tilde{w}_e(x,t),
\end{equation}
where $\tilde{w}_e$ is defined by
\[
\tilde{w}_e(x,t)=\E\Big[ \frac{\partial u}{\partial t}\Big( x , 
\int_0^t (-1)^{N_s}ds\Big) 1_{\{ N_t\in 2\Nset \}} \Big].
\]
Similar computation leads to
\begin{equation}
\label{gen_deriv_1}
\frac{\partial w_o}{\partial t}(x,t)=-c_1w_o(x,t)+c_0w_e(x,t)-\tilde{w}_o(x,t),
\end{equation}
where $\tilde{w}_o$ is defined by
\[
\tilde{w}_o(x,t)=\E\Big[ \frac{\partial u}{\partial t}\Big( x , 
\int_0^t (-1)^{N_s}ds\Big) 1_{\{ N_t\in 2\Nset +1\}} \Big].
\]
We apply the same procedure to $\tilde{w}$:
\begin{equation}
\label{gen_deriv_2}
\frac{\partial \tilde{w}_e}{\partial t}(x,t)=-c_0\tilde{w}_e(x,t)+c_1\tilde{w}_o(x,t)
+\E\Big[ \frac{\partial ^2u}{\partial t^2}\Big( x , \int_0^t (-1)^{N_s}ds\Big) 
1_{\{ N_t\in 2\Nset \}} \Big].
\end{equation}
By \eqref{gen_deriv}, \eqref{gen_deriv_1} and since $u$ is the solution of 
the wave equation, we obtain
\begin{eqnarray*}
\frac{\partial^2 w_e}{\partial t^2}(x,t)&=&-c_0\frac{\partial w_e}{\partial t}(x,t)
+c_1\frac{\partial w_o}{\partial t}(x,t)+\frac{\partial \tilde{w}_e}{\partial t}(x,t)\\
&=&-c_0\frac{\partial w_e}{\partial t}(x,t)+c_1\frac{\partial w_o}{\partial t}(x,t)
-c_0\tilde{w}_e(x,t)+c_1\tilde{w}_0(x,t)+a^2\frac{\partial^2 w_e}{\partial x^2}(x,t)\\
&=&-2c_0\frac{\partial w_e}{\partial t}(x,t)+(c_0 c_1-c_0^2)w_e(x,t)+(c_0 c_1
-c_1^2)w_o(x,t)+a^2\frac{\partial^2 w_e}{\partial x^2}(x,t).
\end{eqnarray*}
The same kind of computation permits to obtain the equation satisfied by $w_o$.
\end{proof}\newpage
}{}}
{\color{blue}
\ifthenelse{\boolean{complet}}{
\section{Properties of the integrated telegraph noise}\label{properties}
The study of the process $(Z^0_t,\ t\ge 0)$ introduced in
\eqref{def_1.2} shall be only given in the particular symmetric case $c_0=c_1$ 
(the study in the general case is complicated). Consequently, the r.v.'s 
$(e_n;\ n\ge 1)$ defined at the beginning of Section \ref{results}, have 
the common exponential distribution with parameter $1/c_0$. Hence the 
process $(N^0_t,\ t\ge0)$ defined by \eqref{...} is a Poisson process with 
rate $\tau=c_0$
($\E(N^0_t)=\tau t$) and $N_0=0$. Recall that
\begin{equation}\label{defx}
Z^0_t=\int_0^t(-1)^{N^0_s}ds.
\end{equation}
The aim of this section is to study the two dimensional process $(Z_t^0,\
N_t^0;\ t\ge 0)$.
First, we determine in Proposition \ref{loi cond} the conditional density of 
$Z_t^0$ given $N_t^0=n$. As a by product we obtain the density function of 
$Z_t^0$ (see Proposition \ref{bessel} with $p=1$). Second, we prove in 
Proposition \ref{ajout} that $(Z_t^0,N_t^0,\, t\ge 0)$ is Markov and we 
determine its semi-group. We conclude this section by showing that the 
solution of the telegraph equation can be expressed in terms of the associated 
wave equation and $(Z_t^0)_{t\ge 0}$. For this reason, $(Z_t^0)_{t\ge 0}$ will 
be called the integrated telegraph noise (ITN for short).
\begin{prop}\label{loi cond}
Let $n\in\mathbb{N}$ and $t>0$. The conditional density function
of $Z^0_t$ given $N^0_t=n$ is the centered beta density
\begin{equation}\label{eq:loi cond1}
\mbox{for}\ n=2k,\ k\in\mathbb{N^*}:\quad
f_n(t,x)=\chi_{2k}\frac{(t+x)^k(t-x)^{k-1}}{t^{2k}}1_{[-t\le x\le t
]}(x),
\end{equation}
\begin{equation}\label{eq:loi cond2}
\mbox{for}\ n=2k+1,\ k\in\mathbb{N}:\quad
f_n(t,x)=\chi_{2k+1}\frac{(t+x)^k(t-x)^{k}}{t^{2k+1}}1_{[-t\le x\le t
]}(x),
\end{equation}
with
\begin{eqnarray*}
\chi_{2k+1}=\chi_{2k+2}=\frac{1}{2^{2k+1}B(k+1,k+1)}=\frac{(2k+1)!}{2^{2k+1}(k!)^2}\quad
(k\ge 0),
\end{eqnarray*}
where $B$ is the beta function (first Euler function):
$B(r,s)=\frac{\Gamma(r)\Gamma(s)}{\Gamma(r+s)}$.
\end{prop}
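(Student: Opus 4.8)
The plan is to obtain the stated conditional laws by specializing Proposition~\ref{loi__1} to the symmetric case $c_0=c_1=:c$ and then evaluating the two normalizing constants explicitly. When $c_0=c_1$ we have $\overline{\tau}=(c_1-c_0)/2=0$, so the exponential weight $e^{\overline{\tau}z}$ in \eqref{cond_pair} and \eqref{cond_impair} is identically $1$, and those identities (together with the definitions of $\alpha_k(t)$, $\tilde\alpha_k(t)$ in \eqref{pair__*}--\eqref{impair__*}) reduce to
\[
\P\bigl(Z^0_t\in dz\mid N^0_t=2k\bigr)=\frac{(t-z)^{k-1}(t+z)^k}{\alpha_k(t)}\,1_{[-t,t]}(z),\qquad \alpha_k(t)=\int_{-t}^{t}(t-z)^{k-1}(t+z)^k\,dz\quad(k\ge 1),
\]
\[
\P\bigl(Z^0_t\in dz\mid N^0_t=2k+1\bigr)=\frac{(t-z)^{k}(t+z)^k}{\tilde\alpha_k(t)}\,1_{[-t,t]}(z),\qquad \tilde\alpha_k(t)=\int_{-t}^{t}(t-z)^{k}(t+z)^k\,dz\quad(k\ge 0).
\]
Thus the conditional density already has the form claimed in the proposition, and it only remains to identify $1/\alpha_k(t)$ with $\chi_{2k}/t^{2k}$ and $1/\tilde\alpha_k(t)$ with $\chi_{2k+1}/t^{2k+1}$.

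To evaluate $\alpha_k(t)$ and $\tilde\alpha_k(t)$ I would use the affine substitution $z=t(2u-1)$, i.e.\ $u=(t+z)/(2t)\in[0,1]$, for which $t+z=2tu$, $t-z=2t(1-u)$ and $dz=2t\,du$; this turns both integrals into elementary Euler integrals:
\[
\alpha_k(t)=2^{2k}t^{2k}\int_0^1 u^{k}(1-u)^{k-1}\,du=2^{2k}t^{2k}B(k+1,k),\qquad \tilde\alpha_k(t)=2^{2k+1}t^{2k+1}\int_0^1 u^{k}(1-u)^{k}\,du=2^{2k+1}t^{2k+1}B(k+1,k+1).
\]

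Using $B(k+1,k)=\tfrac12 B(k,k)$ (immediate from $\Gamma(k+1)=k\Gamma(k)$ and $\Gamma(2k+1)=2k\Gamma(2k)$) and $B(k+1,k+1)=\Gamma(k+1)^2/\Gamma(2k+2)=(k!)^2/(2k+1)!$, one finds $1/\alpha_k(t)=1/\bigl(2^{2k-1}t^{2k}B(k,k)\bigr)$ and $1/\tilde\alpha_k(t)=(2k+1)!/\bigl(2^{2k+1}t^{2k+1}(k!)^2\bigr)$. Writing $2k=2(k-1)+2$ in the definition of the $\chi$'s gives $\chi_{2k}=1/\bigl(2^{2k-1}B(k,k)\bigr)$ while $\chi_{2k+1}=1/\bigl(2^{2k+1}B(k+1,k+1)\bigr)=(2k+1)!/\bigl(2^{2k+1}(k!)^2\bigr)$; hence $1/\alpha_k(t)=\chi_{2k}/t^{2k}$ and $1/\tilde\alpha_k(t)=\chi_{2k+1}/t^{2k+1}$, which is exactly \eqref{eq:loi cond1}--\eqref{eq:loi cond2}.

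I do not expect a genuine obstacle: the result is essentially a change of variable in the conditional densities of Proposition~\ref{loi__1}, and the only delicate point is keeping track of the beta-function identities and of the index shift relating $\chi_{2k}$ to $\chi_{2k+1}$. If one preferred an argument not relying on Proposition~\ref{loi__1}, one could proceed directly: since $c_0=c_1$, $(N^0_t)$ is a Poisson process, so conditionally on $N^0_t=n$ the jump times are distributed as the order statistics of $n$ i.i.d.\ uniform variables on $[0,t]$; substituting this into the telescoping expression \eqref{resum__*} for $Z^0_t$ and computing the law of the resulting alternating sum of uniform order statistics yields the same beta densities, at the cost of a somewhat longer calculation.
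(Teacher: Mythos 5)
Your argument is correct --- the specialization of Proposition~\ref{loi__1} to $c_0=c_1$ does kill the factor $e^{\overline{\tau}z}$ in \eqref{cond_pair} and \eqref{cond_impair}, the substitution $z=t(2u-1)$ turns $\alpha_k(t)$ and $\tilde\alpha_k(t)$ into $2^{2k}t^{2k}B(k+1,k)$ and $2^{2k+1}t^{2k+1}B(k+1,k+1)$ respectively, and your bookkeeping with $B(k+1,k)=\tfrac12 B(k,k)$ and the index shift in $\chi_{2k}$ checks out. However, this is not how the paper proves this particular proposition: its dedicated proof is self-contained and does not invoke the asymmetric result at all. It uses the classical fact that, conditionally on $N^0_t=n$, the jump times of the Poisson process are distributed as the order statistics of $n$ i.i.d.\ uniform variables on $[0,t]$ (the Dirichlet law), writes $Z^0_t$ as the alternating telescoping sum $2U_1-2U_2+\cdots\pm t$ of those order statistics, and then establishes the beta densities by induction on $k$ via a pair of recursions expressing $I_{2k}$ in terms of $I_{2k-1}$ and $I_{2k+1}$ in terms of $I_{2k}$, with the constants $\chi_n$ emerging from the recursion $\alpha_{k+1}=\frac{2k+3}{2(k+1)}\alpha_k$. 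This is essentially the ``somewhat longer calculation'' you sketch in your last sentence. Your route is shorter and makes the symmetric case a genuine corollary of the general law (which is in fact how the paper states the analogous corollary of Proposition~\ref{loi__1} in the symmetric case); the paper's route is more elementary in that it needs only the uniform order-statistics representation of Poisson arrivals and no computation with gamma-distributed sums of exponentials. One cosmetic point: the case $n=0$ is degenerate ($Z^0_t=t$ a.s., a Dirac mass rather than a density), which is why the stated formulas start at $n=1$; it costs nothing to say so explicitly.
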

\begin{proof} Straightforward for $n=0$.\\
Let us study the two first cases: $n=1$ and $n=2$.\\
{\bf 1)} For $n=1$, we define $U_1$ the first and unique jump for
the counting process $(N^0_t)$ on the time interval $[0,t]$. Then
$Z^0_t=U_1-(t-U_1)=2U_1-t$. Given $N^0_t=1$, $U_1$ is uniformly
distributed on the interval $[0,t]$. This implies that $Z_t^0$ is
uniformly distributed on $[-t,t]$. This corresponds to
\eqref{eq:loi cond2}
with $k=0$ and $c_1=1/2$.\\
{\bf 2)} If $n=2$, we denote by $U_1$ and $U_2$ the jumps of the
counting process with $U_1<U_2$. Given $N^0_t=2$, the couple
$(U_1, U_2)$ is distributed as a couple of rearranged independent
uniform random variables on $[0,t]$. Namely, its density function
is
\[
\frac{2}{t^2}1_{\{0<x<y<t\}}.
\]
Moreover, it is clear that:
\[
Z^0_t=U_1-(U_2-U_1)+t-U_2=2U_1-2U_2+t.
\]
Hence, for any bounded Borel function $\varphi$
\[
\E[\varphi(Z^0_t)\vert
N^0_t=2]=\frac{2!}{t^2}\int_{[0,t]^2}\varphi(2x-2y+t)1_{\{0<x<y<t\}}dx\,dy.
\]
Let us introduce the change of variables $r=2x-2y+t$, $v=x$. Then
\[
\E[\varphi(Z^0_t)\vert
N^0_t=2]=\frac{2!}{2t^2}\int_{-t}^t\varphi(r)\Big(\int_0^{(t+r)/2}dv\Big)\,dr=
\int_{-t}^t\varphi(r)\frac{t+r}{2t^2}dr.
\]
We conclude that the conditional density satisfies \eqref{eq:loi
cond1} with parameters $k=1$ and $c_2=1/2$. 
{\bf 3)} In order to prove the statement, we shall proceed by induction.\\
{\bf a)} Let us begin with few reminders concerning the Dirichlet law.\\
Let $V_1,\ldots, V_n$ be independent and uniformly distributed random variables on $[0,t]$.
We denote
$V_{(1)},\ldots,V_{(n)}$ its increasing rearrangement :
$\{V_{(1)},\ldots,V_{(n)}\}=\{V_1,\ldots, V_n\}$ and
$V_{(1)}<\ldots<V_{(n)}$. Then the Dirichlet law $\mathbb{D}_n(t)$ is the 
distribution of the random vector $(V_{(1)},\ldots,V_{(n)})$: the density 
of $(V_{(1)},\ldots,V_{(n)})$ is given by
\begin{equation}
\label{densdiri} \frac{n!}{t^n}1_{\{0<x_1<\ldots<x_n<t\}}.
\end{equation}
{\bf b)} Let us denote by $(U_n)_{n\ge 1}$ the sequence of jump
times of $(N^0_u)$
\[
N^0_u=\sum_{n\ge 1}1_{\{U_n\le u\}}.
\]
Note that, according to \eqref{...}, $U_n=e_1+\ldots+e_n$.
We recall that, given $N^0_t=n\ge 1$, the distribution of $(U_1,\ldots, U_n)$ 
is $\mathbb{D}_n(t)$.\\
{\bf c)} Let $k\ge 1$. Since
\begin{eqnarray*}
N^0_t=\left\{\begin{array}{l}
2i,\quad \mbox{if}\ t\in[U_{2i}, U_{2i+1}[\quad (i\ge 0)\\
2i+1,\quad \mbox{if}\ t\in[U_{2i+1}, U_{2i+2}[
           \end{array}\right.
\end{eqnarray*}
we deduce that, under the condition $N^0_t=2k$ then
$U_{2k}\le t<U_{2k+1}$ and
\begin{eqnarray}
\label{decomp+-}
Z^0_t&=& U_1-(U_2-U_1)+(U_3-U_2)+\ldots-(U_{2k}-U_{2k-1})+t-U_{2k}\nonumber\\
&=& 2U_1-2U_2+\ldots+2U_{2k-1}-2U_{2k}+t.
\end{eqnarray}
Note that \eqref{decomp+-} still holds if $k=0$ as soon as we adopt the convention $U_0=0$.\\
Similarly, if $N^0_t=2k+1$, we get $U_{2k+1}\le t<U_{2k+2}$
and
\begin{equation}
\label{decompimp} Z^0_t=2U_1-2U_2+\ldots-2U_{2k}+2U_{2k+1}-t.
\end{equation}
We introduce $(R_n)_{n\ge 1}$ the following sequence of random variables
\begin{equation}
\label{suitern} R_n=2\Big(\sum_{i=1}^n (-1)^{i+1} U_i \Big)
\end{equation}
Hence
\begin{eqnarray}
\label{4E} Z^0_t=\left\{\begin{array}{ll}
R_{2k}+t&\mbox{as}\ N^0_t=2k;\ k\ge 0\\
R_{2k+1}-t&\mbox{as}\ N^0_t=2k+1,\ k\ge 0.
           \end{array}\right.
\end{eqnarray}
{\bf d)} Let $\varphi: \Rset\to\Rset_+$ a bounded Borel function and 
$I_n(\varphi,t)$ the associated integral:
\[
I_n(\varphi,t):=\E[\varphi(Z^0_t)\vert N^0_t=n].
\]
Let $k\ge 0$. By \eqref{4E}, \eqref{suitern} and \eqref{densdiri}, we obtain
\[
I_{2k+1}(\varphi,t)=\frac{(2k+1)!}{t^{2k+1}}\int_{[0,t]^{2k+1}}
\varphi(2x_1-2x_2+...+2x_{2k+1}-t)1_{\{x_1<...<x_{2k+1}\}}dx_1...
dx_{2k+1}
\]
\[
I_{2k}(\varphi,t)=\frac{(2k)!}{t^{2k}}\int_{[0,t]^{2k}}\varphi(2x_1-2x_2+
\ldots-2x_{2k}+t)1_{\{x_1<\ldots<x_{2k}\}}dx_1\ldots
dx_{2k}.
\]
Setting $\varphi^{(a)}(x)=\varphi(x+a)$, we get:
\begin{eqnarray}
\label{4F}
I_{2k}(\varphi,t)&=&\frac{(2k)!}{t^{2k}}\int_0^t dx_{2k}
\int_{[0,x_{2k}]^{2k-1} }\varphi(2x_1-2x_2+\ldots+2x_{2k-1}+t-2x_{2k})\nonumber\\
&&1_{\{x_1<\ldots<x_{2k-1}\}}dx_1\ldots dx_{2k-1}\quad (k \ge 1)\nonumber\\
&=&\frac{2k}{t^{2k}}\int_0^t (x_{2k})^{2k-1}
I_{2k-1}(\varphi^{(t-x_{2k})},x_{2k}) dx_{2k}.
\end{eqnarray}
Similarly it can be easily proved:
\begin{equation}
\label{4G} I_{2k+1}(\varphi,t)=\frac{2k+1}{t^{2k+1}}\int_0^t
(x_{2k+1})^{2k} I_{2k}(\varphi^{(x_{2k+1}-t)},x_{2k+1}) dx_{2k+1}.
\end{equation}
{\bf e)} We claim that
\begin{equation}
\label{4H}
I_{2k+1}(\varphi,t)=\alpha_k\int_{-t}^t\frac{(t+x)^k(t-x)^k}{t^{2k+1}}\,\varphi(x)
dx\quad k\ge 0
\end{equation}
\begin{equation}
\label{4I}
I_{2k}(\varphi,t)=\beta_k\int_{-t}^t\frac{(t+x)^k(t-x)^{k-1}}{t^{2k}}\,\varphi(x)
dx\quad k\ge 1.
\end{equation}
Let us observe that $I_0(\varphi,t)=\varphi(t)$. Then, taking
$k=0$ in \eqref{4G} leads to:
\begin{eqnarray*}
I_1(\varphi,t)&=&\frac{1}{t}\int_0^t I_0(\varphi^{(x-t)},x)dx=\frac{1}{t}\int_0^t \varphi^{(x-t)}(x)dx\\
&=&\frac{1}{t}\int_0^t
\varphi(2x-t)dx=\frac{1}{2t}\int_{-t}^t\varphi(y)dy.
\end{eqnarray*}
As a result, \eqref{4H} holds with $k=0$ and $\alpha_0=1/2$.\\
Next, taking $k=1$ in \eqref{4F}, we obtain:
\begin{eqnarray*}
I_2(\varphi,t)&=&\frac{2}{t^2}\int_0^t x I_1(\varphi^{(t-x)},x)dx=
\frac{2}{t^2}\int_0^t\frac{x}{2x}
\Big(\int_{-x}^x\varphi(t-x+y)dy\Big)dx
\end{eqnarray*}
Introducing the change of variables $u=t-x+y$, we have successively:
\begin{eqnarray*}
I_2(\varphi,t)
&=&\frac{1}{t^2}\int_0^t\Big(\int 1_{\{t-2x\le u\le t\}}\varphi(u)du\Big)dx\\
&=&\frac{1}{t^2}\int_{-t}^t\varphi(u)\Big(\int_{(t-u)/2}^t
dx\Big)du=\frac{1}{2 t^2}\int_{-t}^t\varphi(u)(t+u)du.
\end{eqnarray*}
This shows that \eqref{4I} is satisfied with $k=1$ and $\beta_1=1/2.$\\
We prove \eqref{4H} and \eqref{4I} by induction on $k$.\\
First changing $k$ by $k+1$ in \eqref{4F}, we get:
\begin{eqnarray*}
I_{2k+2}(\varphi,t)&=&\frac{2k+2}{t^{2k+2}}\int_0^t x^{2k+1} I_{2k+1}(\varphi^{(t-x)},x) dx\\
&=&\frac{(2k+2)\alpha_k}{t^{2k+2}}\int_{0}^t x^{2k+1}\frac{1}{x^{2k+1}}
\Big(\int_{-x}^x (y+x)^k(x-y)^k\varphi(t-x+y)dy\Big)dx
\end{eqnarray*}
Second introducing
$u=t-x+y$ we have
\begin{eqnarray*}
I_{2k+2}(\varphi,t)
&=&\frac{(2k+2)\alpha_k}{t^{2k+2}}\int_{-t}^t\varphi(u)
\Big(\int_{(t-u)/2}^t(u-t+2x)^k(t-u)^k dx \Big)du\\
&=&\frac{\alpha_k}{t^{2k+2}}\int_{-t}^t\varphi(u)(t-u)^k
(u+t)^{k+1}du.
\end{eqnarray*}
Hence \eqref{4I} (with $k\to k+1$) is satisfied and
\begin{equation}
\label{4K} \beta_{k+1}=\alpha_k.
\end{equation}
We proceed similarly, dealing with \eqref{4G} (with the change $k\to k+1$):
\begin{eqnarray*}
I_{2k+3}(\varphi,t)&=&\frac{2k+3}{t^{2k+3}}\int_0^t x^{2k+2} I_{2k+2}(\varphi^{(x-t)},x) dx\\
&=&\frac{(2k+3)\beta_{k+1}}{t^{2k+3}}\int_{0}^t x^{2k+2}\frac{1}{x^{2k+2}}
\Big(\int_{-x}^x (y+x)^{k+1}(x-y)^k\varphi(x-t+y)dy\Big)dx
\end{eqnarray*}
The change of variable $u=x-t+y$ leads to
\begin{eqnarray*}
I_{2k+3}(\varphi,t)
&=&\frac{(2k+3)\beta_{k+1}}{t^{2k+3}}\int_{-t}^t\varphi(u)
\Big(\int_{(t+u)/2}^t(2x-u-t)^k(t+u)^{k+1} dx \Big)du\\
&=&\frac{(2k+3)\beta_{k+1}}{2(k+1)t^{2k+3}}\int_{-t}^t\varphi(u)(t-u)^{k+1}
(u+t)^{k+1}du.
\end{eqnarray*}
Hence \eqref{4H} follows with $ k+1$ instead of $k$ and
\begin{equation}
\label{4L} \alpha_{k+1}=\frac{2k+3}{2(k+1)}\,\beta_{k+1}.
\end{equation}
It remains to compute the coefficients $(\alpha_k)$ and $(\beta_k)$. 
Since $\alpha_0=1/2$, relations
\eqref{4K} and \eqref{4L} imply that:
\[
\alpha_0=\frac{1}{2}\quad\quad\alpha_{k+1}=\frac{2k+3}{2(k+1)}\;\alpha_k.
\]
Consequently
\begin{eqnarray*}
\alpha_k&=&\frac{(2k+1)(2k-1)\ldots 3\cdot 1}{2^{k+1}k!}=\frac{1}{2^{2k+1}}\frac{(2k+1)!}{(k!)^2}=
\frac{1}{2^{2k+1}}\frac{\Gamma(2k+2)}{\Gamma (k+1)^2}\\
&=&\frac{1}{2^{2k+1}}\frac{1}{B(k+1,k+1)}.
\end{eqnarray*}\end{proof}
Let us recall the definition of the modified Bessel functions:
\[
I_\nu(\xi)=\sum_{m\ge0}\frac{(\xi/2)^{\nu+2m}}{m!\Gamma(\nu+m+1)}.
\]
\begin{prop}
\label{bessel} Let $\epsilon$ be a random variable independent
from the Poisson process $N^0_t$, valued in $\{-1,1\}$ with
$p:=\P(\epsilon=1)=1-\P(\epsilon=-1)$. Then
\[
\P(\epsilon Z_t^0/t\in dx)=\Big(p\delta_1(dx)+(1-p)\delta_{-1}(dx)+f(t,x)dx\Big)e^{-\tau t},
\]
with
\[
f(t,x)=\frac{\tau t}{2}\Big\{I_0\Big(\tau
t\sqrt{1-x^2}\Big)+\frac{1+(2p-1)x}{\sqrt{1-x^2}}I_1\Big(\tau
t\sqrt{1-x^2}\Big)\Big\}1_{[-1,1]}(x)
\]
and $\delta_1(dx)$ (resp. $\delta_{-1}(dx)$) is the Dirac measure at $1$ (resp. $-1$).
\end{prop}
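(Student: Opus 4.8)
The plan is to condition on the value of the Poisson variable $N^0_t$, to sum the series of conditional beta densities supplied by Proposition \ref{loi cond}, and then to deal with the extra randomisation by $\epsilon$ through a one-line symmetrisation. Since $\epsilon$ is independent of the process $(N^0_s)_{s\ge 0}$, we have
\[
\P(\epsilon Z^0_t/t\in dx)=p\,\P(Z^0_t/t\in dx)+(1-p)\,\P(-Z^0_t/t\in dx),
\]
so it suffices to identify the law of $Z^0_t$ and then to rescale and symmetrise.

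First I would write $\P(Z^0_t\in dy)=\sum_{n\ge 0}e^{-\tau t}\frac{(\tau t)^n}{n!}f_n(t,y)$, the term $n=0$ contributing the atom $e^{-\tau t}\delta_t$ (since $Z^0_t=t$ on $\{N^0_t=0\}$). Splitting the remaining sum into even and odd indices and inserting \eqref{eq:loi cond1}--\eqref{eq:loi cond2} with the explicit constants $\chi_{2k}$ and $\chi_{2k+1}$, the factorials simplify and, after dividing by the relevant power of $t$, the even-index sum and the odd-index sum become, respectively, the defining power series of $I_1$ and of $I_0$; this is exactly the computation carried out for Proposition \ref{bessel__*}, here in the specialisation $c_0=c_1=\tau$, $\overline\tau=0$. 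One thus obtains the density of $Z^0_t$ on $(-t,t)$ as a multiple of $\sqrt{(t+y)/(t-y)}\,I_1(\tau\sqrt{t^2-y^2})+I_0(\tau\sqrt{t^2-y^2})$. The change of variables $x=y/t$ --- under which $\sqrt{t^2-y^2}=t\sqrt{1-x^2}$ and the Jacobian is $t$ --- then yields the law of $Z^0_t/t$, the atom being sent to the point $1$ and the density picking up the factor $\tau t/2$.

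It remains to apply the symmetrisation identity. The atom $e^{-\tau t}\delta_1$ becomes $e^{-\tau t}\big(p\delta_1+(1-p)\delta_{-1}\big)$; the $I_0$-term is even in $x$, hence unchanged; and the two $I_1$-contributions combine through the elementary identity
\[
p\sqrt{\tfrac{1+x}{1-x}}+(1-p)\sqrt{\tfrac{1-x}{1+x}}=\frac{p(1+x)+(1-p)(1-x)}{\sqrt{1-x^2}}=\frac{1+(2p-1)x}{\sqrt{1-x^2}},
\]
which is precisely the factor appearing in the statement. I do not foresee any genuine obstacle: the result is a direct corollary of Proposition \ref{loi cond}, and the only steps demanding a little care are the bookkeeping of the factorial and binomial constants needed to recognise the two series as $I_0$ and $I_1$, and checking that the rescaling $y\mapsto y/t$ and the $\epsilon$-randomisation interact correctly with the atoms at $\pm t$.
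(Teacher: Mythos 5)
Your proposal is correct and follows essentially the same route as the paper: condition on $N^0_t$, insert the conditional beta densities of Proposition \ref{loi cond}, recognise the even- and odd-index sums as the series for $I_1$ and $I_0$, and fold in the independent sign $\epsilon$ via the identity $p\sqrt{(1+x)/(1-x)}+(1-p)\sqrt{(1-x)/(1+x)}=(1+(2p-1)x)/\sqrt{1-x^2}$. The only (immaterial) difference is that you symmetrise after summing the full series for the law of $Z^0_t/t$, whereas the paper combines the $p$ and $1-p$ contributions term by term before summing.
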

The presence of $\epsilon$ corresponds to a randomization of the
initial condition.\\ In the particular case $p=1/2$, $x\to f(t,x)$ is
symmetric and was already presented by G.H. Weiss
(\cite{weiss02} p.393) using the Fourier-Laplace transform. A
formulation of Proposition \ref{bessel} in terms of Laplace
transform will be given in Remark \ref{remark} below. Here we
shall present a proof of Proposition \ref{bessel} based
on the knowledge of conditional densities (see Proposition
\ref{loi cond}).
\begin{proof} Let $Z_t:=\epsilon Z_t^0/t$.
Let $\varphi:\Rset\to\Rset_+$ be a Borel function and
$\Delta:=\E[\varphi(Z_t)]$.
We  have: $\Delta=\sum_{n\ge 0} \Delta_n$ where $\Delta_n=\E[\varphi(Z_t)1_{\{N^0_t=n\}}]$.\\
{\bf 1)} Let $n=0$. Since $N_t^0=0\Rightarrow N_s^0=0\ \forall s\in[0,t]$, 
and $\epsilon$ and $N_t^0$ are independent, we get
\begin{eqnarray}
\label{der*}
\Delta_0&=&\E[\varphi(\epsilon)1_{\{N_t^0=0\}}]=
(\varphi(1)p+\varphi(-1)(1-p))\P(N_t^0=0)\nonumber\\
&=&(\varphi(1)p+\varphi(-1)(1-p))e^{-\tau t}.
\end{eqnarray}
{\bf 2)} Assume $n=2k\ge 1$. Using the fact that $\epsilon$ and $(N_u^0)$ are 
independent and Proposition \ref{loi cond}, we obtain:
\begin{eqnarray*}
\Delta_{2k}&=&p\E\Big[\varphi\Big(\frac{Z_t^0}{t}\Big)1_{\{N_t^0=2k\}}\Big]
+(1-p)\E\Big[\varphi\Big(-\frac{Z_t^0}{t}\Big)1_{\{N_t^0=2k\}}\Big]\\
&=& e^{-\tau t}\frac{(\tau t)^{2k}}{(2k)!}\int_{-t}^t\Big\{ p\varphi(\frac{y}{t})f_{2k}(t,y)
+(1-p)\varphi(-\frac{y}{t})f_{2k}(t,y) \Big\}dy\\
&=&e^{-\tau t}\frac{\tau^{2k} t^{2k+1}}{(2k)!}\int_{-1}^1\varphi(x)\Big\{ pf_{2k}(t,xt)
+(1-p)f_{2k}(t,-xt) \Big\}dx\\
&=& e^{-\tau t}\int_{-1}^1\varphi(x) A_{2k}(t,x)dx
\end{eqnarray*}
where
\[
A_{2k}(t,x)=\frac{1}{k!(k-1)!}\Big(\frac{\tau t}{2}\Big)^{2k}(1+(2p-1)x)(1-x^2)^{k-1}.
\]
By the same way, when $k\ge 0$, we have:
\[
\Delta_{2k+1}=e^{-\tau t}\int_{-1}^1\varphi(x) A_{2k+1}(t,x)dx,
\]
with:
\[
A_{2k+1}(t,x)=\frac{1}{(k!)^2}\Big(\frac{\tau t}{2}\Big)^{2k+1} (1-x^2)^k.
\]
Proposition \ref{bessel} is a direct consequence of \eqref{der*} and the identities:
\[
\sum_{k\ge 1}A_{2k}(t,x)=\frac{\tau t}{2}\frac{1}{\sqrt{1-x^2}}(1+(2p-1)x) I_1(\tau t\sqrt{1-x^2})
\]
\[
\sum_{k\ge 0}A_{2k+1}(t,x)=\frac{\tau t}{2}I_0(\tau t\sqrt{1-x^2}).
\]
\end{proof}
We are now able to deal with the two dimensional process $(Z_t^0, N_t^0,\, t\ge 0)$.
\begin{prop}
\label{ajout} 1) $(Z_t^0,N_t^0;\ t\ge 0)$ is a
$\Rset\times\Nset$-valued Markov process.\\
2) Let $t>s\ge 0$ and $n,m\ge 0$. Conditionally on $Z_s^0=x$ and
$N_s^0=n$, $(Z_t^0,N_t^0)$ is distributed as
\[
\Big( x+(-1)^n\int_0^{t-s}(-1)^{N_u^0}du, N_{t-s}^0 \Big)
\]
In particular,
\begin{eqnarray}
\label{eq:ajout} \P(Z_t^0-Z_s^0\in dy,\,
N_t^0-N_s^0=m)=\left\{\begin{array}{ll} \delta_{(-1)^m(t-s)}(dy) &
\mbox{if}\ m=0\\[8pt]
f_m(t-s,(-1)^ny)dy & \mbox{otherwise}.
\end{array}\right.
\end{eqnarray}
\end{prop}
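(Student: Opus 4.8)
The plan is to imitate the proof of Proposition~\ref{markk}, exploiting the fact that in the present (symmetric) setting $(N^0_u)_{u\ge 0}$ is a genuine Poisson process, so that its increments after a fixed time $s$ form again a Poisson process independent of the past; there is no need to keep track of the parity of the number of jumps as one had to for $(\N_u)$. First I would record the additive decomposition coming from \eqref{defx}: for $t>s\ge 0$,
\[
Z^0_t = Z^0_s + (-1)^{N^0_s}\int_0^{t-s}(-1)^{\widetilde N_v}\,dv,\qquad \widetilde N_v := N^0_{s+v}-N^0_s ,
\]
together with the elementary observation that $Z^0_s=\int_0^s(-1)^{N^0_u}\,du$ and $N^0_s$ are both measurable with respect to $\sigma(N^0_v:\ v\le s)$, while by stationarity and independence of the increments of the Poisson process $(\widetilde N_v)_{v\ge 0}$ is independent of $\sigma(N^0_v:\ v\le s)$ and has the same law as $(N^0_v)_{v\ge 0}$.

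Next I would combine these facts. Set $W:=\int_0^{t-s}(-1)^{\widetilde N_v}\,dv$, so that $(W,\widetilde N_{t-s})$ is independent of $\sigma(N^0_v:\ v\le s)$ and $(W,\widetilde N_{t-s})\overset{d}{=}(Z^0_{t-s},N^0_{t-s})$. Then, conditionally on $\{Z^0_s=x,\ N^0_s=n\}$, one has $(Z^0_t,N^0_t)=(x+(-1)^n W,\ n+\widetilde N_{t-s})$, which is distributed as $(x+(-1)^n Z^0_{t-s},\ n+N^0_{t-s})$. Since this conditional law depends on the pair $(Z^0_s,N^0_s)$ only through its value, the Markov property stated in 1) follows, and the displayed representation in 2) is exactly what we have just written.

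For the explicit form \eqref{eq:ajout}, I would condition further on $\{N^0_t-N^0_s=m\}=\{\widetilde N_{t-s}=m\}$. If $m=0$, Proposition~\ref{loi cond} (the case $n=0$) gives $W=t-s$ almost surely on that event, hence $Z^0_t-Z^0_s=(-1)^n(t-s)$, i.e.\ the atomic contribution in \eqref{eq:ajout}. If $m\ge 1$, Proposition~\ref{loi cond} says that $W$ has conditional density $f_m(t-s,\cdot)$ on $[-(t-s),t-s]$, so $(-1)^nW=Z^0_t-Z^0_s$ has conditional density $y\mapsto f_m(t-s,(-1)^n y)$ (this equals $f_m(t-s,y)$ for $n$ even, and is the density of $-W$, namely $f_m(t-s,-y)$, for $n$ odd). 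Multiplying by $\P(\widetilde N_{t-s}=m)=\P(N^0_{t-s}=m)$ then yields \eqref{eq:ajout}.

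There is no serious obstacle here; the only points that need a little care are the measurability bookkeeping (that $(Z^0_s,N^0_s)$ is measurable with respect to the past $\sigma$-field while the future increment process is independent of it) and the treatment of the sign $(-1)^{N^0_s}$, which is harmless thanks to the parity relation for the centered densities $f_m$ used above. I would also remark, exactly as in the Remark following Proposition~\ref{markk}, that combining part~2) with Proposition~\ref{loi cond} pins down the full semigroup of $(Z^0_t,N^0_t)$.
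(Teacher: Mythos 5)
Your proof is correct and follows essentially the same route as the paper's: the additive decomposition $Z^0_t=Z^0_s+(-1)^{N^0_s}\int_0^{t-s}(-1)^{N^0_{s+v}-N^0_s}\,dv$, independence and stationarity of the Poisson increments to get the Markov property and the conditional representation, and then Proposition~\ref{loi cond} applied to the increment to get \eqref{eq:ajout}. If anything you are more careful than the paper's two-line argument, correctly tracking the sign $(-1)^n$ in the $m=0$ atom and the factor $\P(N^0_{t-s}=m)$ multiplying the conditional density.
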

\begin{proof}
Let $t>s\ge 0$. Using \eqref{defx} we get
\[
Z_t^0=Z_s^0+(-1)^{N_s^0}\int_0^{t-s}(-1)^{N_u^{0,s}}du,
\]
where $N_u^{0,s}=N_{s+u}^0-N_s^0$, $u\ge 0$.\\
Recall that $(N_u^{0,s};\ u\ge 0)\overset{(d)}{=}(N_u^0;\ u\ge 0)$
and $(N_u^{0,s};\ u\ge 0)$ is independent from $(N_u^0;\ u\le s)$.
This shows that $(Z_t^0,N_t^0;\ t\ge 0)$ is Markov. Obviously
\eqref{eq:ajout} is a direct consequence of Proposition \ref{loi
cond}.
\end{proof}
Let us now present a link between the ITN process and the
telegraph
equation.\\
Let $f:\Rset\to\Rset$ be a function of class $\mathcal{C}^2$ whose
first and second derivatives are bounded. For $c>0$, we define
(see \cite{eckstein00})
\[
u(x,t)=\frac{1}{2}\Big\{f(x+ct)+f(x-ct)\Big\}.
\]
Then $u$ is the unique solution of the wave equation
\begin{eqnarray*}
\left\{\begin{array}{l}
\displaystyle\frac{\partial^2 u}{\partial t^2}=c^2\frac{\partial^2 u}{\partial x^2},\\
\displaystyle u(x,0)=f(x),\quad \frac{\partial u}{\partial
t}(x,0)=0.
       \end{array}\right.
\end{eqnarray*}
\begin{prop}
\label{telegraph} The function
\[
w(x,t)=\E\Big[ u\Big(x,\int_0^t (-1)^{N^0_s}ds\Big)\Big],\quad
(x\in\Rset, t\ge 0)
\]
where $(Z_t^0)$ is the ITN process defined by \eqref{defx} is the
solution of the telegraph equation (TE)
\begin{eqnarray*}
\left\{\begin{array}{l}
\displaystyle\frac{\partial^2 w}{\partial t^2}+2\tau\frac{\partial w}{\partial t}
=c^2\frac{\partial^2 w}{\partial x^2},\\[8pt]
\displaystyle w(x,0)=f(x),\quad \frac{\partial w}{\partial
t}(x,0)=0.
       \end{array}\right.
\end{eqnarray*}
\end{prop}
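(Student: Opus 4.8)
The plan is to mirror the proof of Proposition~\ref{telegraph**}, applying Lemma~\ref{essai_mark} in the specialization $c_0=c_1=\tau$ --- so that $(Z^0_t,N^0_t)$ is exactly the Markov process appearing there, with $N^0$ the Poisson process of rate $\tau$ and $Z^0_t=\int_0^t(-1)^{N^0_s}\,ds$ as in \eqref{defx} --- twice: once to express $\partial_t w$, once to express $\partial_{tt}w$.

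First I would take in \eqref{eq:essai_mark} the test function $F(z,n)=u(x,z)$, with $x\in\Rset$ held fixed. Since $u(x,z)=\tfrac12\bigl(f(x+cz)+f(x-cz)\bigr)$ is of class $\mathcal{C}^1$ in $z$ (as $f\in\mathcal{C}^2$) and does not depend on $n$, the jump term of \eqref{eq:essai_mark} disappears, leaving
\[
\frac{\partial w}{\partial t}(x,t)=\E\Bigl[\frac{\partial u}{\partial t}\Bigl(x,\int_0^t(-1)^{N^0_s}\,ds\Bigr)(-1)^{N^0_t}\Bigr],
\]
where $\tfrac{\partial u}{\partial t}(x,z)=\tfrac{c}{2}\bigl(f'(x+cz)-f'(x-cz)\bigr)$ is bounded because $f'$ is. Next I would apply \eqref{eq:essai_mark} to $F(z,n)=\tfrac{\partial u}{\partial t}(x,z)\,(-1)^n$, which is bounded, continuous, and of class $\mathcal{C}^1$ in $z$; its $z$-derivative is $\tfrac{\partial^2 u}{\partial t^2}(x,z)\,(-1)^n$, so the first term on the right of \eqref{eq:essai_mark} contributes $\E\bigl[\tfrac{\partial^2 u}{\partial t^2}(x,Z^0_t)(-1)^{2N^0_t}\bigr]=\E\bigl[\tfrac{\partial^2 u}{\partial t^2}(x,Z^0_t)\bigr]$, while $F(z,n+1)-F(z,n)=-2(-1)^n\tfrac{\partial u}{\partial t}(x,z)$ makes the jump term equal to $-2\tau\,\E\bigl[\tfrac{\partial u}{\partial t}(x,Z^0_t)(-1)^{N^0_t}\bigr]=-2\tau\,\tfrac{\partial w}{\partial t}(x,t)$, by the previous display. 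Hence
\[
\frac{\partial^2 w}{\partial t^2}(x,t)=\E\Bigl[\frac{\partial^2 u}{\partial t^2}\Bigl(x,\int_0^t(-1)^{N^0_s}\,ds\Bigr)\Bigr]-2\tau\,\frac{\partial w}{\partial t}(x,t).
\]

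To finish I would invoke the wave equation for $u$, $\tfrac{\partial^2 u}{\partial t^2}(x,z)=c^2\tfrac{\partial^2 u}{\partial x^2}(x,z)$. Since $\tfrac{\partial^2 u}{\partial x^2}(x,z)=\tfrac12\bigl(f''(x+cz)+f''(x-cz)\bigr)$ is bounded and $|Z^0_t|\le t$, differentiation under $\E[\,\cdot\,]$ in the variable $x$ is legitimate, so $\E\bigl[\tfrac{\partial^2 u}{\partial t^2}(x,Z^0_t)\bigr]=c^2\E\bigl[\tfrac{\partial^2 u}{\partial x^2}(x,Z^0_t)\bigr]=c^2\tfrac{\partial^2 w}{\partial x^2}(x,t)$, which yields $\partial_{tt}w+2\tau\,\partial_t w=c^2\,\partial_{xx}w$. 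The initial data follow at once: $Z^0_0=0$ and $N^0_0=0$ give $w(x,0)=u(x,0)=f(x)$, and the first display at $t=0$ gives $\tfrac{\partial w}{\partial t}(x,0)=\tfrac{\partial u}{\partial t}(x,0)=0$. Together with the regularity of $w$ furnished by the differentiations above, this identifies $w$ as the solution of (TE).

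The only delicate point beyond this bookkeeping is the applicability of Lemma~\ref{essai_mark} to $F(z,n)=u(x,z)$, which the lemma requires to be bounded: if $f$ itself is not assumed bounded, I would fix a horizon $T$ and replace $f$ by a bounded $\mathcal{C}^2$ function, with bounded derivatives, agreeing with $f$ on $[x-cT,x+cT]$ --- this alters neither $w$ nor the derivatives above on $[0,T]$ since $|Z^0_t|\le t\le T$ --- and then let $T\to\infty$. Alternatively, one can keep the argument self-contained within this section by conditioning on the first jump time $T_1\sim\mathrm{Exp}(\tau)$ of $N^0$ to obtain the renewal identity $w(x,t)=e^{-\tau t}u(x,t)+\tau\int_0^t e^{-\tau r}\,\E\bigl[u\bigl(x,r-Z'_{t-r}\bigr)\bigr]\,dr$, with $Z'$ an independent copy of $Z^0$, and then differentiating twice in $t$; this is more computational but uses only the Poisson structure of $N^0$ and the wave equation for $u$.
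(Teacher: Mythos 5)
Your proof is correct and follows essentially the paper's approach: applying Lemma~\ref{essai_mark} twice, to $(z,n)\mapsto u(x,z)$ and to $(z,n)\mapsto \frac{\partial u}{\partial t}(x,z)(-1)^n$, is exactly how the paper proves the companion Proposition~\ref{telegraph**}, and the paper's own proof of Proposition~\ref{telegraph} is just the unpacked version of the same computation (the $B_h+C_h$ increment decomposition, using $\E[(-1)^{N^0_{t+h}-N^0_t}]=e^{-2\tau h}$, which is what underlies the lemma). Your extra care about the boundedness hypothesis on $F$ in Lemma~\ref{essai_mark} (localizing $f$ using $|Z^0_t|\le t$) is a legitimate refinement that the paper glosses over.
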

This result can be proved using asymptotic analysis applied to
difference equation associated with the persistent random walk
\cite{goldstein50} or using Fourier transforms \cite{weiss02}.
Here we shall present a new method of proof.
\begin{proof} Since $u(x,t)$ is continuously differentiable, we
write
\begin{equation}
\label{derivation} u\Big(x,\int_0^t (-1)^{N^0_s}
ds\Big)=u(x,0)+\int_0^t (-1)^{N^0_s}\frac{\partial u}{\partial
t}\Big(x,\int_0^s (-1)^{N^0_u}du\Big)ds.
\end{equation}
Taking the average and the derivative and using the Lebesgue
theorem we get:
\begin{equation}
\label{ajout2} \frac{\partial w}{\partial
t}(x,t)=\frac{\partial}{\partial t}\E\left[u\Big(x,\int_0^t
(-1)^{N^0_s} ds\Big)\right]=\E\left[(-1)^{N^0_t}\frac{\partial
u}{\partial t}\Big(x,\int_0^t (-1)^{N^0_s}ds\Big)\right].
\end{equation}
In order to take the second $t$-derivative, we compute the
increment $A_h$ of the right hand side of the previous identity:
\[
A_h=\E\left[(-1)^{N^0_{t+h}}\frac{\partial u}{\partial
t}\Big(x,\int_0^{t+h}
(-1)^{N^0_s}ds\Big)\right]-\E\left[(-1)^{N^0_t}\frac{\partial
u}{\partial t}\Big(x,\int_0^t (-1)^{N^0_s}ds\Big)\right].
\]
This increment can be decomposed as follows: $A_h=B_h+C_h$ where
\[
B_h=\E\left[(-1)^{N^0_{t+h}}\frac{\partial u}{\partial
t}\Big(x,\int_0^{t+h}
(-1)^{N^0_s}ds\Big)\right]-\E\left[(-1)^{N^0_{t+h}}\frac{\partial
u}{\partial t}\Big(x,\int_0^t (-1)^{N^0_s}ds\Big)\right],
\]
\[
C_h=\E\left[\Big[(-1)^{N^0_{t+h}}-(-1)^{N^0_{t}}\Big]\frac{\partial
u}{\partial t}\Big(x,\int_0^{t} (-1)^{N^0_s}ds\Big)\right].
\]
First let us study $B_h$. Using equation \eqref{derivation} where
$u$ is replaced by $\frac{\partial u}{\partial t}$ and taking the
expectation we get:
\[
B_h=\E\left[(-1)^{N^0_{t+h}}\int_t^{t+h}(-1)^{N^0_s}\frac{\partial^2
u}{\partial t^2} \Big(x,\int_0^{s} (-1)^{N^0_u}du\Big)ds\right].
\]
Since the second derivative of $u$ is bounded on any compact set,
and $s\to N_s^0$ is right continuous, the dominated Lebesgue
theorem implies:
\[
\lim_{h\to 0}\frac{1}{h}B_h=\E\Big[(-1)^{2N^0_t}\frac{\partial^2
u}{\partial t^2} \Big(x,\int_0^{t} (-1)^{N^0_s}ds\Big) \Big]
=\E\Big[\frac{\partial^2 u}{\partial t^2} \Big(x,\int_0^{t}
(-1)^{N^0_s}ds\Big) \Big].
\]
Since $u$ satisfies the wave equation,
\[
\lim_{h\to 0}\frac{1}{h}B_h=c^2\E\Big[\frac{\partial^2 u}{\partial
x^2} \Big(x,\int_0^{t} (-1)^{N^0_s}ds\Big)
\Big]=c^2\frac{\partial^2 }{\partial x^2}
\E\Big[u\Big(x,\int_0^{t} (-1)^{N^0_s}ds\Big)
\Big]=c^2\frac{\partial^ 2 w}{\partial x^2}(x,t).
\]
Second, we deal with $C_h$. Since the increments of the Poisson
process are independent we have:
\begin{eqnarray*}
C_h&=&\E\left[(-1)^{N^0_t}\frac{\partial u}{\partial
t}\Big(x,\int_0^{t}
(-1)^{N^0_s}ds\Big)\right]\Big(\E[(-1)^{N^0_{t+h}-N^0_t}]-1\Big)\\
&=&\frac{\partial w}{\partial t}(x,t)(e^{-\tau h}-1-\tau h
e^{-\tau h} +o(h))=\frac{\partial w}{\partial
t}(x,t)(-2h\tau+o(h)).
\end{eqnarray*}
As a result, taking the $t$-derivative in \eqref{ajout2} comes to
\[
\frac{\partial ^2 w}{\partial t^2}(x,t)=\lim_{h\to
0}\frac{A_h}{h}=\lim_{h\to 0}\frac{B_h+C_h}{h}=c^2\frac{\partial^
2 w}{\partial x^2}(x,t)-2\tau\frac{\partial w}{\partial t}(x,t).
\]
The function $w$ is actually the solution of the telegraph
equation. It is easy to prove that $w$ satisfies the boundary
conditions.\end{proof}}{}}
\section{Convergence of the persistent walk to the ITN}
\label{sectionpreuve1} Suppose $\rho_0=1$. The aim of this section is to prove the
convergence of the interpolated persistent random walk towards the
generalized integrated telegraph noise (ITN) i.e. Theorem \ref{cas1}. 
Let us start with preliminary results.\\
First, let us recall that $(X_n, n\in\Nset)$ is the persistent
random walk starting in $0$ defined by the increments process
$(Y_n,\,n\in\Nset)$ (see Section \ref{section_not}) with transition probabilities
\begin{eqnarray*}
\pi^\Delta=\left(\begin{array}{cc}
1-c_0\Delta_t & c_0\Delta_t\\
c_1\Delta_t & 1-c_1\Delta_t
      \end{array}\right).
\end{eqnarray*}
Let $(T_k;\ k\ge 1)$ be the sign changes sequence of times :
\begin{eqnarray}
\label{*1} \left\{\begin{array}{l}
T_1=\inf\{t\ge 1 \ : Y_t\neq Y_0\}\\[8pt]
T_{k+1}=\inf\{t> T_k\ : Y_t\neq Y_{T_{k}}\};\quad k\ge 1.
       \end{array}\right.
\end{eqnarray}
We put  $T_0=0$ and
\begin{equation} \label{*2}
A_k=T_k-T_{k-1}\quad k\ge 1
\end{equation}
Let  $N_t$ be the number of times over $[0,t]$ so that the sign of
 $(Y_n)$ changes:
\begin{equation}
\label{*3} N_t=\sum_{j\ge 1}1_{\{T_j\le t\}}
\end{equation}
The definition of $N_t$ implies that:
\[
N_t=k\Longleftrightarrow T_k\le t<T_{k+1}
\]
We suppose in this subsection that $Y_0=-1$.\\
We deduce from the identities above:
\begin{equation}
\label{*4} X_t=\sum_{j=1}^k (-1)^j
A_j+(-1)^{k+1}(t-T_k+1)\quad\mbox{where}\ k=N_t.
\end{equation}
By \eqref{*2} we obtain:
\begin{equation}
\label{*5} T_k=A_1+\ldots+A_k\quad k\ge 1.
\end{equation}
Hence the equations \eqref{*3}, \eqref{*4} and \eqref{*5} permit
to emphasize the bijective correspondence between  $(X_n;\
n\in\Nset)$ and $(A_k;\ k\in\Nset)$.\\
We introduce the normalization of $(X_n;\ n\in\Nset)$ given by
\eqref{scal} with $\Delta_x=\Delta_t$:
\begin{equation}
\label{4.4b}
Z^\Delta_s=\Delta_t X_{s/\Delta_t}\quad (s/\Delta_t\in\Nset).
\end{equation}
Let us define:
\begin{equation}
\label{*6} N^\Delta_s=\sum_{j\ge 1}1_{\{\sum_{i=1}^j\Delta_t
A_j\le s\}}\quad s\ge 0.
\end{equation}
Let us note that
\[
N^\Delta_s=N_{s/\Delta_t}\quad\mbox{if}\ s/\Delta_t\in\Nset.
\]
That permits to extend the definition of $Z^\Delta_s$ to any $s\ge
0$ by setting
\begin{equation}
\label{*7} \tilde{Z}_s^\Delta=\sum_{j=1}^k (-1)^j(\Delta_t
A_j)+(-1)^{k+1}(s-\Delta_t T_k+\Delta_t)\quad k=N^\Delta_s.
\end{equation}
Obviously $\tilde{Z}_s^\Delta=Z^\Delta_{s}$ if $s/\Delta_t\in\Nset$.\\
In order to study the asymptotic behaviour of
$(\tilde{Z}_s^\Delta)$ as $\Delta_t\to 0$, we shall first prove
the convergence in distribution of $(\Delta_t A_j)_{j\ge 1}$ and
$(N^\Delta_s)_{s\ge 0}$. \\
We recall that some random variable $\xi$ is exponentially
distributed with parameter $\lambda>0$ if its density is given by
$\frac{1}{\lambda}\, e^{-x/\lambda}1_{\{x\ge 0\}}$.
\begin{lem}
\label{convloi} The random variables  $(A_k)$ are independent and
$\Delta_t A_{2k}$ (resp. $\Delta_t A_{2k+1}$) converges in
distribution, as $\Delta_t\to 0$, to the exponential law with
parameter $\frac{1}{c_1}$ (resp. $\frac{1}{c_0}$).
\end{lem}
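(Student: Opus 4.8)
The plan is to compute, for fixed $\Delta_t$, the exact joint distribution of the run-length sequence $(A_k)_{k\ge1}$ — which makes both independence and the geometric marginals transparent — and then to rescale and pass to the limit through the tail functions.

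\textbf{Step 1: joint law of $(A_k)$.} I would fix $\Delta_t$ small enough that $c_0\Delta_t,c_1\Delta_t\in(0,1)$. Since $Y_0=-1$ and $(Y_t)$ changes sign exactly at the times $T_1<T_2<\cdots$, the common value of $(Y_t)$ on $[T_{k-1},T_k)$ is $-1$ for $k$ odd and $+1$ for $k$ even. Hence, for integers $n_1,\dots,n_m\ge1$, the event $\{A_1=n_1,\dots,A_m=n_m\}$ is exactly the event that the path $Y_0,Y_1,\dots$ is made of $n_1$ consecutive $-1$'s, then $n_2$ consecutive $+1$'s, and so on, with a sign change immediately after each block; reading off the matching entries of $\pi^\Delta$ gives
\[
\P(A_1=n_1,\dots,A_m=n_m)=\prod_{k=1}^m (1-p_k\Delta_t)^{n_k-1}\,p_k\Delta_t,\qquad p_k=\begin{cases}c_0,& k\ \text{odd},\\ c_1,& k\ \text{even}.\end{cases}
\]
Summing out all coordinates but one then shows that $A_k$ is geometric on $\{1,2,\dots\}$ with success probability $p_k\Delta_t$ (in particular $T_k<\infty$ a.s., so every $A_k$ is well defined), and the product structure of the joint law is precisely the assertion that the $(A_k)_{k\ge1}$ are independent.

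\textbf{Step 2: rescaling and the limit.} Fixing $k\ge1$ and $x>0$, from the geometric tail $\P(A_k>m)=(1-p_k\Delta_t)^m$ for integers $m\ge0$ I get
\[
\P(\Delta_t A_k>x)=(1-p_k\Delta_t)^{\lfloor x/\Delta_t\rfloor}.
\]
Since $\lfloor x/\Delta_t\rfloor\Delta_t\to x$ and $\log(1-p_k\Delta_t)=-p_k\Delta_t+O(\Delta_t^2)$, the exponent $\lfloor x/\Delta_t\rfloor\log(1-p_k\Delta_t)$ tends to $-p_kx$, so $\P(\Delta_t A_k>x)\to e^{-p_kx}$ as $\Delta_t\to0$. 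Thus $\Delta_t A_k$ converges in distribution to the exponential law with tail $x\mapsto e^{-p_kx}$, i.e. — in the convention recalled just before the lemma — the exponential law with parameter $1/c_1$ for $k$ even and $1/c_0$ for $k$ odd, which is the claim. Since, in addition, the $A_k$ are independent for each $\Delta_t$ and each marginal converges, the whole rescaled sequence $(\Delta_t A_k)_{k\ge1}$ converges in distribution to a sequence of independent such exponentials — the form in which the lemma will later be used to prove \thmref{cas1}.

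\textbf{Where the care goes.} There is no real obstacle here: the lemma is elementary. The two points that need a little attention are the parity bookkeeping in Step 1 — checking that $Y_0=-1$ forces the $c_0$-runs to be the odd-indexed ones, hence the $1/c_0$ versus $1/c_1$ assignment in the statement — and the routine floor-and-logarithm estimate in Step 2. Accordingly, essentially all of the write-up is the identification of the law in Step 1.
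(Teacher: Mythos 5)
Your proof is correct and follows essentially the same route as the paper: both rest on identifying $A_k$ as geometric with success probability $c_0\Delta_t$ ($k$ odd) or $c_1\Delta_t$ ($k$ even) and then letting $\Delta_t\to 0$. The only differences are cosmetic — the paper asserts independence directly from the Markov property and verifies the limit via the Laplace transform $\E[e^{-\mu\Delta_t A_{2k}}]\to c_1/(\mu+c_1)$, whereas you derive independence from the explicit joint law and pass to the limit through the survival function $(1-p_k\Delta_t)^{\lfloor x/\Delta_t\rfloor}\to e^{-p_kx}$, both of which are equally valid.
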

\begin{proof} Since $(Y_n)$ is a Markov chain, then the $(A_k)$ are 
independent. First let us study the convergence in distribution of the
sequence $\Delta_t A_{2k}$. We use the Laplace transform of
$\Delta_t A_{2k}$: $\varphi(\mu)=\E[e^{-\mu\Delta_tA_{2k}}]$,
$\mu\ge 0$. Since $A_{2k}$ is geometrically distributed with
parameter $c_1\Delta_t$, we obtain
\begin{eqnarray}
\label{25**2}
\varphi(\mu)&=&\sum_{j=1}^\infty e^{-\mu\Delta_t j}(1-c_1\Delta_t)^{j-1}c_1\Delta_t\nonumber\\
&=&
\frac{c_1\Delta_t}{e^{\mu\Delta_t}-(1-c_1\Delta_t)}=\frac{c_1\Delta_t}{(\mu+c_1)\Delta_t
+o(\Delta_t)}=\frac{c_1}{\mu+c_1}+o(\Delta_t)
\end{eqnarray}
The function $\varphi(\mu)$ converges for any $\mu\ge 0$ to the
Laplace transform of some exponential law with parameter
$c_1^{-1}$. This proves the convergence in distribution of
$\Delta_t A_{2k}$. Concerning $A_{2k-1}$ the arguments are
similar.\end{proof} Let us recall that the counting process
$(\N_t, \ t\ge 0)$ has been defined through the sequence of jumps
$(e_n; n\ge 1)$ via \eqref{**1.1}, and $(e_n;\ n\ge 1)$ are i.i.d. and exponentially distributed.
\begin{lem}
\label{lem:convmarg}
Let $s>0$, $k\ge 1$ and $\Phi_k:\Rset^{k}\to\Rset$ be a bounded continuous function. Then\\[5pt]
1) $\displaystyle\lim_{\Delta_t\to 0}\P(N^\Delta_s=0)=\P(\N_s=0)$\\
2) $\displaystyle\lim_{\Delta_t\to
0}\E[\Phi_k(\Delta_tA_1,\Delta_tA_2,\ldots,\Delta_tA_k)1_{\{N^{\Delta}_s=k\}}]=
\E[\Phi_k(\lambda_1e_1,\lambda_2e_2,\ldots,\lambda_ke_k)1_{\{\N_s=k\}}],$
where $\lambda_k$ has been defined by \eqref{.*.}.
\end{lem}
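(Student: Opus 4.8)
The plan is to reduce both statements to a single joint weak-convergence fact combined with the portmanteau theorem. First I would extract from Lemma~\ref{convloi} exactly what is needed: the variables $(A_j)_{j\ge 1}$ are independent (because $(Y_n)$ is a Markov chain), so for each fixed $k$ the vector $(\Delta_t A_1,\dots,\Delta_t A_{k+1})$ has a product law whose $j$-th marginal converges in distribution, as $\Delta_t\to 0$, to the law of $\lambda_j e_j$. Since the candidate limit $(\lambda_1 e_1,\dots,\lambda_{k+1}e_{k+1})$ is again a product measure, marginal convergence automatically upgrades to joint convergence in distribution: $(\Delta_t A_1,\dots,\Delta_t A_{k+1})$ converges in law to $(\lambda_1 e_1,\dots,\lambda_{k+1}e_{k+1})$.

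Part~1) is then almost immediate: by \eqref{*6}, $\{N^\Delta_s=0\}=\{\Delta_t A_1>s\}$, and the limiting law of $\Delta_t A_1$, namely that of $\lambda_1 e_1=e_1/c_0$, has a distribution function that is continuous at $s$; hence $\P(N^\Delta_s=0)=\P(\Delta_t A_1>s)\to\P(e_1/c_0>s)=e^{-c_0 s}=\P(\N_s=0)$, the last equality by Proposition~\ref{loi__1}(1). (Alternatively, $A_1$ is geometric with parameter $c_0\Delta_t$, so $\P(N^\Delta_s=0)=(1-c_0\Delta_t)^{\lfloor s/\Delta_t\rfloor}\to e^{-c_0 s}$ directly.)

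For part~2) I would first rewrite the indicator. Since the partial sums $\sum_{i\le j}\Delta_t A_i$ are nondecreasing in $j$, the event $\{N^\Delta_s=k\}$ equals $\{\sum_{i=1}^k\Delta_t A_i\le s<\sum_{i=1}^{k+1}\Delta_t A_i\}$, and likewise $\{\N_s=k\}=\{\sum_{i=1}^k\lambda_i e_i\le s<\sum_{i=1}^{k+1}\lambda_i e_i\}$ by \eqref{**1.1}. Introduce $g:\Rset^{k+1}\to\Rset$,
\[
g(x_1,\dots,x_{k+1})=\Phi_k(x_1,\dots,x_k)\,1_{\{x_1+\dots+x_k\le s<x_1+\dots+x_{k+1}\}};
\]
then the left-hand side of the claim is $\E[g(\Delta_t A_1,\dots,\Delta_t A_{k+1})]$ and the right-hand side is $\E[g(\lambda_1 e_1,\dots,\lambda_{k+1}e_{k+1})]$. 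The function $g$ is bounded (by $\sup|\Phi_k|$), and its set of discontinuities is contained in the union of the two hyperplanes $\{x_1+\dots+x_k=s\}$ and $\{x_1+\dots+x_{k+1}=s\}$, which have Lebesgue measure zero in $\Rset^{k+1}$; since the law of $(\lambda_1 e_1,\dots,\lambda_{k+1}e_{k+1})$ is absolutely continuous (a product of exponential densities), that discontinuity set is negligible for the limit law. The mapping theorem for bounded functions that are continuous almost everywhere with respect to the limit measure (a form of the portmanteau theorem) then gives $\E[g(\Delta_t A_1,\dots)]\to\E[g(\lambda_1 e_1,\dots)]$, which is exactly~2).

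The point to be careful about is precisely that $g$ contains indicator functions, so it is not continuous and the plain continuous mapping theorem does not apply; the whole argument hinges on checking that the discontinuity set, being a finite union of hyperplanes, is null for the (absolutely continuous) limiting law. Everything else --- independence of the $A_j$, the passage from marginal to joint weak convergence of independent coordinates, and the monotonicity rewriting of $\{N^\Delta_s=k\}$ --- is routine.
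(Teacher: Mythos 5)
Your proof is correct and follows essentially the same route as the paper's: rewrite $\{N^\Delta_s=k\}$ via the monotone partial sums as a two-sided constraint, absorb the indicator into a bounded function of $(\Delta_t A_1,\dots,\Delta_t A_{k+1})$ whose discontinuities lie in two hyperplanes, and invoke the a.e.-continuous mapping theorem since the product-exponential limit law does not charge that null set. The only (welcome) extra detail you supply is the explicit remark that marginal convergence of independent coordinates upgrades to joint convergence, which the paper leaves implicit.
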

\begin{proof}
1) Statement 1) follows from:
\begin{eqnarray*}
\P(N^\Delta_s=0)&=&\P(N_{\lfloor s/\Delta_t\rfloor}=0)=\P(T_1\ge \lfloor s/\Delta_t\rfloor)\\
&=&\P(A_1\ge \lfloor s/\Delta_t\rfloor)=\P(\Delta_tA_1\ge
\Delta_t\lfloor s/\Delta_t\rfloor)
\end{eqnarray*}
where $\lfloor a \rfloor$ denotes the integer part of $a$.\\
2) Set $k\ge 1$. The event $\{N^{\Delta}_s=k\}$ can be decomposed
as follows:
$$
\{N^{\Delta}_s=k\}=\left\{\Delta_t \sum_{j=1}^k A_j\le
s\right\}\cap\left\{\Delta_t \sum_{j=1}^{k+1} A_j> s\right\}.
$$
This identity imply existence of a bounded Borel function
$\psi_k$\,:\,$\Rset^{k+1}\to\Rset$ so that
\begin{eqnarray*}
&&\Phi_k(\Delta_tA_1,\ldots,\Delta_tA_k)1_{\{N^{\Delta}_s=k\}}\\
&&=\Phi_k(\Delta_tA_1,\ldots,\Delta_tA_k)1_{\{\Delta_t \sum_{j=1}^k A_j\le s\}}
1_{\{ \Delta_t \sum_{j=1}^{k+1} A_j> s\}}\\
&&=\psi_k(\Delta_tA_1,\Delta_tA_2,\ldots,\Delta_tA_{k+1}).
\end{eqnarray*}
Since $\Phi_k$ is continuous, the discontinuity points of $\psi_k$
are included in:
\[
\mathbb{U}=\Big\{x\in\Rset^{k+1}:\ \sum_{j=1}^k x_j=s
\Big\}\cup\Big\{x\in\Rset^{k+1}:\ \sum_{j=1}^{k+1} x_j=s \Big\}.
\]
By Lemma \ref{convloi}, $(\Delta_t A_1,\ldots,\Delta_t A_{k+1})$
converges in distribution towards $(\lambda_1 e_1,\ldots,\lambda_{k+1}e_{k+1})$ as
$\Delta_t\to 0$. Since the Lebesgue measure of $\mathbb{U}$ is
null, the limit law does not charge $\mathbb{U}$. We can conclude
evoking for instance Theorem 14 p.247 in
\cite{Brancovan06}).\end{proof}
Let us formulate a straightforward generalization of Lemma
\ref{lem:convmarg}.
\begin{lem}
\label{lem:convmarggen} Let $n\in \Nset$,
$(k_1,\ldots,k_n)\in\Nset^n$ such that $k_1\le k_2\le\ldots\le
k_n$ and $(s_1,\ldots,s_n)\in\Rset_+^n$ with $s_1\le
s_2\le\ldots\le s_n $. Let $\Phi:\Rset^{k_n}\to\Rset$ be a bounded
and continuous function. Then
\begin{eqnarray}
\label{conv_marg_gen} &&\lim_{\Delta_t\to
0}\E[\Phi(\Delta_tA_1,...,\Delta_tA_{k_n})1_{\{N^{\Delta}_{s_1}=k_1,...,N^{\Delta}_{s_n}=k_n
\}}]\nonumber\\
&&\quad =\E[\Phi(\lambda_1e_1,...,\lambda_{k_n}e_{k_n})1_{\{\N_{s_1}=k_1,...,\N_{s_n}=k_n\}}]
\end{eqnarray}
\end{lem}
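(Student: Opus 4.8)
The plan is to follow the scheme already used for Lemma~\ref{lem:convmarg}, the only genuinely new ingredient being the bookkeeping that expresses the joint event $\{N^\Delta_{s_1}=k_1,\ldots,N^\Delta_{s_n}=k_n\}$ through finitely many rescaled increments. Write $T_j=A_1+\ldots+A_j$ with the convention $T_0=0$, so that by \eqref{*6} one has $N^\Delta_s=k\iff\Delta_tT_k\le s<\Delta_tT_{k+1}$ (recall $A_j\ge1$, so the $T_j$ are strictly increasing). Since $s\mapsto N^\Delta_s$ is non-decreasing and $k_1\le\ldots\le k_n$, the prescribed configuration is attainable and
\[
1_{\{N^\Delta_{s_1}=k_1,\ldots,N^\Delta_{s_n}=k_n\}}=\prod_{i=1}^n 1_{\{\Delta_t(A_1+\ldots+A_{k_i})\le s_i\}}\,1_{\{\Delta_t(A_1+\ldots+A_{k_i+1})> s_i\}}.
\]
Because $k_i+1\le k_n+1$ for every $i$, this is a bounded Borel function of $(\Delta_tA_1,\ldots,\Delta_tA_{k_n+1})$; multiplying by $\Phi(\Delta_tA_1,\ldots,\Delta_tA_{k_n})$ I obtain
\[
\Phi(\Delta_tA_1,\ldots,\Delta_tA_{k_n})1_{\{N^\Delta_{s_1}=k_1,\ldots,N^\Delta_{s_n}=k_n\}}=\psi(\Delta_tA_1,\ldots,\Delta_tA_{k_n+1})
\]
for a suitable bounded Borel function $\psi\colon\Rset^{k_n+1}\to\Rset$. (When $k_n=0$ the statement is just part~1) of Lemma~\ref{lem:convmarg}, so one may assume $k_n\ge1$.)

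Next I would locate the discontinuities of $\psi$. Since $\Phi$ is continuous, the set of discontinuity points of $\psi$ is contained in
\[
\mathbb{U}=\bigcup_{i=1}^n\Big(\{x\in\Rset^{k_n+1}:\ x_1+\ldots+x_{k_i}=s_i\}\cup\{x\in\Rset^{k_n+1}:\ x_1+\ldots+x_{k_i+1}=s_i\}\Big),
\]
a finite union of affine hyperplanes, hence Lebesgue-negligible.

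The argument then closes exactly as in Lemma~\ref{lem:convmarg}. By Lemma~\ref{convloi} the $A_k$ are independent, so the marginal convergences upgrade to $(\Delta_tA_1,\ldots,\Delta_tA_{k_n+1})\Rightarrow(\lambda_1e_1,\ldots,\lambda_{k_n+1}e_{k_n+1})$ as $\Delta_t\to0$, and the limiting law is absolutely continuous on $\Rset_+^{k_n+1}$, so it does not charge $\mathbb{U}$. Applying the convergence theorem for expectations of bounded Borel functions whose discontinuity set is null under the limit law (Theorem~14 p.~247 in \cite{Brancovan06}) yields $\E[\psi(\Delta_tA_1,\ldots,\Delta_tA_{k_n+1})]\to\E[\psi(\lambda_1e_1,\ldots,\lambda_{k_n+1}e_{k_n+1})]$. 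Finally, using $\N_s=k\iff\lambda_1e_1+\ldots+\lambda_ke_k\le s<\lambda_1e_1+\ldots+\lambda_{k+1}e_{k+1}$ (from \eqref{**1.1}), the right-hand side equals $\E[\Phi(\lambda_1e_1,\ldots,\lambda_{k_n}e_{k_n})1_{\{\N_{s_1}=k_1,\ldots,\N_{s_n}=k_n\}}]$, which is \eqref{conv_marg_gen}.

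The only step that requires any care is the first one: checking that all the events $\{N^\Delta_{s_i}=k_i\}$ are jointly encoded by the first $k_n+1$ increments and that the resulting $\psi$ has a Lebesgue-null discontinuity set; this is where the hypotheses $k_1\le\ldots\le k_n$ and $s_1\le\ldots\le s_n$ and the independence of the $A_k$ (Lemma~\ref{convloi}) are used. I do not expect a real obstacle — once the product formula for the indicator is in place, the passage to the limit is verbatim the portmanteau-type argument of Lemma~\ref{lem:convmarg}.
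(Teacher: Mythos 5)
Your proof is correct and is exactly the argument the paper intends: the paper states Lemma~\ref{lem:convmarggen} without proof as a ``straightforward generalization'' of Lemma~\ref{lem:convmarg}, and your write-up carries out that generalization along the same lines (rewrite the joint event as a product of indicators of the partial sums $\Delta_t(A_1+\cdots+A_{k_i})$ and $\Delta_t(A_1+\cdots+A_{k_i+1})$, observe the discontinuity set is a Lebesgue-null union of hyperplanes not charged by the absolutely continuous limit law of $(\lambda_1e_1,\ldots,\lambda_{k_n+1}e_{k_n+1})$, and apply the same mapping-theorem argument). No gaps.
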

\begin{prop}
\label{convmarg} The random variable $\tilde{Z}^\Delta_s$
converges in distribution towards $-\Z_s$, for any $s>0$, as
$\Delta_t\to 0$.
\end{prop}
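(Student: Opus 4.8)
The plan is to fix an arbitrary bounded continuous $f:\Rset\to\Rset$ and a time $s>0$, and to show $\E[f(\tilde Z^\Delta_s)]\to\E[f(-\Z_s)]$ as $\Delta_t\to 0$, which is the claimed convergence in distribution. The starting point is a pathwise formula common to both sides. For $k\ge 0$ introduce the map
\[
g_k(x_1,\dots,x_k;s)=\sum_{j=1}^k(-1)^j x_j+(-1)^{k+1}\Big(s-\sum_{j=1}^k x_j\Big),
\]
which for each fixed $k$ is continuous in $(x_1,\dots,x_k)$, with the convention $g_0(\,;s)=-s$. By \eqref{*7} and \eqref{*5} (with $T_0=0$), on the event $\{N^\Delta_s=k\}$ one has $\tilde Z^\Delta_s=g_k(\Delta_t A_1,\dots,\Delta_t A_k;s)+(-1)^{k+1}\Delta_t$; by \eqref{resum__*} (and $\Z_s=s$ on $\{\N_s=0\}$), on the event $\{\N_s=k\}$ one has $-\Z_s=g_k(\lambda_1 e_1,\dots,\lambda_k e_k;s)$. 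Moreover $|\tilde Z^\Delta_s|\le s+\Delta_t$ and $|\Z_s|\le s$, so for $\Delta_t\le 1$ all the random variables in play lie in the compact interval $K=[-(s+2),s+2]$; let $\omega$ denote the modulus of continuity of $f$ on $K$.

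Decompose $\E[f(\tilde Z^\Delta_s)]=\sum_{k\ge 0}\E[f(\tilde Z^\Delta_s)1_{\{N^\Delta_s=k\}}]$. For a fixed $k\ge 1$, replacing $\tilde Z^\Delta_s$ by $g_k(\Delta_t A_1,\dots,\Delta_t A_k;s)$ inside $f$ costs at most $\omega(\Delta_t)$, since both points lie in $K$. Hence, writing $\Phi_k:=f\circ g_k(\,\cdot\,;s)$, which is bounded and continuous,
\[
\E[f(\tilde Z^\Delta_s)1_{\{N^\Delta_s=k\}}]=\E[\Phi_k(\Delta_t A_1,\dots,\Delta_t A_k)1_{\{N^\Delta_s=k\}}]+O(\omega(\Delta_t)).
\]
By Lemma \ref{lem:convmarg}(2) the first term on the right converges, as $\Delta_t\to 0$, to $\E[\Phi_k(\lambda_1 e_1,\dots,\lambda_k e_k)1_{\{\N_s=k\}}]=\E[f(-\Z_s)1_{\{\N_s=k\}}]$, so $\E[f(\tilde Z^\Delta_s)1_{\{N^\Delta_s=k\}}]\to\E[f(-\Z_s)1_{\{\N_s=k\}}]$. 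The case $k=0$ is identical, using $g_0(\,;s)=-s$ and Lemma \ref{lem:convmarg}(1) for $\P(N^\Delta_s=0)\to\P(\N_s=0)$.

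The only genuinely non-routine step is the interchange of $\lim_{\Delta_t\to 0}$ with the infinite sum over $k$. Each summand is bounded by $\|f\|_\infty\,\P(N^\Delta_s=k)$ (resp. $\|f\|_\infty\,\P(\N_s=k)$), so it suffices to bound the tails uniformly in $\Delta_t$. Fix $\mu>0$. The computation in \eqref{25**2}, using $e^{\mu\Delta_t}\ge 1+\mu\Delta_t$, gives $\E[e^{-\mu\Delta_t A_j}]\le\max\big(\tfrac{c_0}{c_0+\mu},\tfrac{c_1}{c_1+\mu}\big)=:q<1$ for every index $j$ and every $\Delta_t>0$; likewise $\E[e^{-\mu\lambda_j e_j}]$ equals $\tfrac{c_0}{c_0+\mu}$ or $\tfrac{c_1}{c_1+\mu}$, hence is $\le q$. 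Since the $A_j$ are independent (Lemma \ref{convloi}) and $\{N^\Delta_s\ge K\}=\{\Delta_t(A_1+\cdots+A_K)\le s\}$, Markov's inequality yields
\[
\P(N^\Delta_s\ge K)\le e^{\mu s}\,\E\big[e^{-\mu\Delta_t(A_1+\cdots+A_K)}\big]\le e^{\mu s}q^K,
\]
and the same estimate $\P(\N_s\ge K)\le e^{\mu s}q^K$ holds for the limit process. Thus the partial sums over $k$ converge uniformly in $\Delta_t$, and we may pass to the limit termwise, obtaining $\E[f(\tilde Z^\Delta_s)]\to\sum_{k\ge 0}\E[f(-\Z_s)1_{\{\N_s=k\}}]=\E[f(-\Z_s)]$. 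Since $f$ is an arbitrary bounded continuous function, this proves that $\tilde Z^\Delta_s$ converges in distribution to $-\Z_s$.
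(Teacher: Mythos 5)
Your proof is correct and follows essentially the same route as the paper's: decompose $\E[f(\tilde Z^\Delta_s)]$ over the events $\{N^\Delta_s=k\}$, pass to the limit termwise via Lemma \ref{lem:convmarg} together with the pathwise representations \eqref{*7} and \eqref{resum__*}, and justify the interchange with the sum by the geometric tail bound $\P(N^\Delta_s\ge K)\le e^{\mu s}q^K$ obtained from Markov's inequality and the Laplace transforms of the geometric variables $A_j$. Your explicit modulus-of-continuity treatment of the residual $(-1)^{k+1}\Delta_t$ term is a small tidying of a step the paper absorbs implicitly, but it does not change the argument.
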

\begin{proof} Let $f\ :\ \Rset \to\Rset$ be a continuous function which 
is bounded by $M$. Identities \eqref{*7} and \eqref{*5} imply that $
\E[f(\tilde{Z}^\Delta_s)]=\sum_{k=0}^\infty E_\Delta(k)$, with
\[
E_\Delta(k)=\E\Big[f\Big(\sum_{j=1}^k
(-1)^j\Delta_tA_j+(-1)^{k+1}\Big(s-\Delta_t\sum_{j=1}^k A_j+\Delta_t\Big)
\Big)1_{\{N^\Delta_s=k\}} \Big]
\]
Applying Lemma \ref{lem:convmarg} and \eqref{resum__*}, we obtain for any $k\ge 0$,
\begin{eqnarray*}
\lim_{\Delta_t\to 0}E_\Delta(k)&=&E\Big[f\Big(\sum_{j=1}^k
(-1)^j\lambda_j e_j+(-1)^{k+1}\Big(s-\sum_{j=1}^k \lambda_j e_j\Big)
\Big)1_{\{\N_s=k\}} \Big]\\
&=&\E[f(-\Z_s)1_{\{\N_s=k \}}].
\end{eqnarray*}
Moreover since $f$ is bounded by $M$, we get
\[
\vert E_\Delta(k)\vert\le M\P(N^\Delta_s=k).
\]
Suppose that $k\ge 1$. Then, using the Markov inequality and the
independence property of the random sequence $(A_n,\ n\ge 0)$, we
obtain
\begin{eqnarray*}
\P(N^\Delta_s=k)&=&\P\Big(\Delta_t\sum_{j=1}^k A_j\le
s<\Delta_t\sum_{j=1}^{k+1}A_j
\Big)\\
&\le&\P\Big(\Delta_t\sum_{j=1}^k A_j\le
s\Big)=\P\Big(\exp\Big\{-\Delta_t\sum_{j=1}^k A_j\Big\}\ge
e^{-s}\Big)\\
&\le&e^s\E\Big[\exp-\Delta_t\sum_{j=1}^k A_j \Big]=e^
s\prod_{j=1}^k\varphi_j(1)
\end{eqnarray*}
where $\varphi_j(\mu)=\E[e^{-\mu\Delta_tA_j}]$. Since $(Y_n)$ is a
Markov chain starting at $Y_0=-1$, for any $j\ge 1$, $A_{2j-1}$
(resp. $A_{2j}$) is geometrically distributed with parameter
$c_0\Delta_t$ (resp. $c_1\Delta_t$). According to \eqref{25**2} we
get
\[
\varphi_{2j}(1)=\frac{c_1\Delta_t}{e^{\Delta_t}-1+c_1\Delta_t}\le
\frac{c_1\Delta_t}{\Delta_t+c_1\Delta_t}=\frac{c_1}{1+c_1}<1.
\]
By the same way, we have:
\[
\varphi_{2j-1}(1)\le\frac{c_0}{1+c_0}<1.
\]
As a result, there exists $0<r<1$ so that
\begin{equation}
\label{lebesgue} \P(N^\Delta_s=k)\le e^s r^k.
\end{equation}
We are now allowed to apply the dominated convergence theorem:
\[
\lim_{\Delta_t\to 0}\E[f(\tilde{Z}^\Delta_s)]=\sum_{k\ge
0}\lim_{\Delta_t\to 0}E_\Delta(k)=\sum_{k\ge
0}\E[f(-\Z_s)1_{\{\N_s=k\}}]=\E[f(-\Z_s)].
\]\end{proof}
\begin{prop}
\label{convmarg_gen} For any $(s_1,\ldots,s_n)\in\Rset_+^n$ such
that $s_1\le s_2\le\ldots\le s_n $, the random vector $
(\tilde{Z}^\Delta_{s_1},\ldots,\tilde{Z}^\Delta_{s_n})$ converges
in distribution to  $(-\Z_{s_1},\ldots,-\Z_{s_n})$,  as $\Delta_t$
tends to $0$.
\end{prop}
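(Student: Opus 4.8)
The plan is to upgrade the one-time statement of Proposition~\ref{convmarg} to a joint statement over the times $s_1\le\cdots\le s_n$, by repeating its proof almost verbatim but with the single-time Lemma~\ref{lem:convmarg} replaced by its multi-time version Lemma~\ref{lem:convmarggen}, and with the uniform geometric tail bound \eqref{lebesgue} re-used to dominate a sum now indexed by non-decreasing multi-indices.

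First I would reduce the claim to showing that, for every bounded continuous $g:\Rset^n\to\Rset$ with $\abs{g}\le M$,
\[
\lim_{\Delta_t\to 0}\E\big[g(\tilde{Z}^\Delta_{s_1},\ldots,\tilde{Z}^\Delta_{s_n})\big]=\E\big[g(-\Z_{s_1},\ldots,-\Z_{s_n})\big].
\]
Since $s_1\le\cdots\le s_n$, the counting process \eqref{*3} is non-decreasing, hence $N^\Delta_{s_1}\le\cdots\le N^\Delta_{s_n}$, and one may split
\[
\E\big[g(\tilde{Z}^\Delta_{s_1},\ldots,\tilde{Z}^\Delta_{s_n})\big]=\sum_{0\le k_1\le\cdots\le k_n}E_\Delta(\mathbf{k}),\qquad E_\Delta(\mathbf{k})=\E\big[g(\tilde{Z}^\Delta_{s_1},\ldots,\tilde{Z}^\Delta_{s_n})\,1_{\{N^\Delta_{s_1}=k_1,\ldots,N^\Delta_{s_n}=k_n\}}\big].
\]
On the event $\{N^\Delta_{s_1}=k_1,\ldots,N^\Delta_{s_n}=k_n\}$, formulas \eqref{*7} and \eqref{*5} express each $\tilde{Z}^\Delta_{s_i}$ as a fixed continuous function of $(\Delta_t A_1,\ldots,\Delta_t A_{k_i})$, the parameter $s_i$ and the shift $\Delta_t$; hence $g(\tilde{Z}^\Delta_{s_1},\ldots,\tilde{Z}^\Delta_{s_n})$ restricted to that event equals $\Phi^{\mathbf{k}}_{\Delta_t}(\Delta_t A_1,\ldots,\Delta_t A_{k_n})$ for a bounded continuous $\Phi^{\mathbf{k}}_{\Delta_t}:\Rset^{k_n}\to\Rset$. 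Writing $\Phi^{\mathbf{k}}_{\Delta_t}=\Phi^{\mathbf{k}}_{0}+(\Phi^{\mathbf{k}}_{\Delta_t}-\Phi^{\mathbf{k}}_{0})$, where $\Phi^{\mathbf{k}}_{0}$ is obtained by dropping the $+\Delta_t$ corrections, I would apply Lemma~\ref{lem:convmarggen} to the $\Delta_t$-free function $\Phi^{\mathbf{k}}_{0}$ and control the remainder by the modulus of continuity of $g$ on the fixed compact box $\prod_i[-s_i-1,s_i+1]$ (valid because $\abs{\tilde{Z}^\Delta_{s_i}}\le s_i+\Delta_t$), which makes $\sup\abs{\Phi^{\mathbf{k}}_{\Delta_t}-\Phi^{\mathbf{k}}_{0}}=o(1)$ and hence that contribution $\le o(1)\,\P(N^\Delta_{s_n}=k_n)\to 0$. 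Identifying the resulting limit via \eqref{resum__*} then gives, for each fixed $\mathbf{k}$,
\[
\lim_{\Delta_t\to 0}E_\Delta(\mathbf{k})=\E\big[g(-\Z_{s_1},\ldots,-\Z_{s_n})\,1_{\{\N_{s_1}=k_1,\ldots,\N_{s_n}=k_n\}}\big].
\]

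It then remains to interchange the limit and the sum. Since $\abs{g}\le M$ one has $\abs{E_\Delta(\mathbf{k})}\le M\,\P(N^\Delta_{s_n}=k_n)$, and by the estimate \eqref{lebesgue} obtained inside the proof of Proposition~\ref{convmarg} there is $r\in(0,1)$, independent of $\Delta_t$, with $\P(N^\Delta_{s_n}=k_n)\le e^{s_n}r^{k_n}$. The number of non-decreasing tuples $(k_1,\ldots,k_n)$ with $k_n=m$ is $\binom{m+n-1}{n-1}$, a polynomial in $m$, so $\sum_{\mathbf{k}}e^{s_n}r^{k_n}<\infty$ and dominates $\sum_{\mathbf{k}}\abs{E_\Delta(\mathbf{k})}$ uniformly in $\Delta_t$. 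Dominated convergence over the countable index set yields
\[
\lim_{\Delta_t\to 0}\E\big[g(\tilde{Z}^\Delta_{s_1},\ldots,\tilde{Z}^\Delta_{s_n})\big]=\sum_{\mathbf{k}}\E\big[g(-\Z_{s_1},\ldots,-\Z_{s_n})\,1_{\{\N_{s_1}=k_1,\ldots,\N_{s_n}=k_n\}}\big]=\E\big[g(-\Z_{s_1},\ldots,-\Z_{s_n})\big],
\]
which is the assertion. I expect the only genuinely delicate points to be the bookkeeping that turns $g(\tilde{Z}^\Delta_{\cdot})1_{\{\cdots\}}$ into a single bounded continuous function of $(\Delta_t A_1,\ldots,\Delta_t A_{k_n})$ with a clean $\Delta_t\to 0$ limit (so that Lemma~\ref{lem:convmarggen} applies verbatim), and the combinatorial count ensuring the geometric domination of Proposition~\ref{convmarg} survives the multi-index summation; neither should present real difficulty beyond what the one-dimensional proof already contains.
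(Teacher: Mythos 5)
Your proof is correct and follows essentially the same route as the paper's: the same decomposition over non-decreasing multi-indices $(k_1,\ldots,k_n)$, the same reduction via \eqref{*7} to a bounded continuous function of $(\Delta_t A_1,\ldots,\Delta_t A_{k_n})$ handled by Lemma~\ref{lem:convmarggen}, and the same geometric domination $\P(N^\Delta_{s_n}=k_n)\le e^{s_n}r^{k_n}$ combined with a polynomial count of tuples to justify interchanging limit and sum. Your explicit treatment of the $\Delta_t$-dependence of the test function (the $+\Delta_t$ shift in \eqref{*7}), via the modulus of continuity of $g$, is a small refinement the paper leaves implicit, but it does not change the argument.
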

\begin{proof} We follow the approach developed in the proof of Proposition 
\ref{convmarg}. Let $f:\Rset^n\to\Rset$ be a bounded and continuous function. We have:
\[
\E\Big[f(\tilde{Z}^\Delta_{s_1},\ldots,\tilde{Z}^\Delta_{s_n})\Big]=\sum_{k_1,\ldots,k_n}
E_\Delta(k_1,\dots,k_n),
\]
where the sum is extended to $(k_1,\ldots,k_n)\in\Nset^n$ so that
$k_1\le k_2\le\ldots\le k_n$ and
\[
E_\Delta(k_1,\dots,k_n)=\E\Big[f(\tilde{Z}^\Delta_{s_1},\ldots,\tilde{Z}^\Delta_{s_n})1_{\{
N^{\Delta}_{s_1}=k_1,\ldots,N^{\Delta}_{s_n}=k_n\}}\Big].
\]
Identity \eqref{*7} implies the existence of a bounded continuous
function $\psi_n:\Rset^{k_n}\to\Rset$ so that
\[
E_\Delta(k_1,\dots,k_n)=\E\Big[\psi_n(\Delta_tA_1,\ldots,\Delta_tA_{k_n})1_{\{
N^{\Delta}_{s_1}=k_1,\ldots,N^{\Delta}_{s_n}=k_n\}}\Big].
\]
Applying Lemma \ref{lem:convmarggen}, we get
\[
\lim_{\Delta_t\to
0}E_\Delta(k_1,\dots,k_n)=\E[\psi_{n}(\lambda_1 
e_1,\ldots,\lambda_{k_n}e_{k_n})1_{\{\N_{s_1}=k_1,\ldots,\N_{s_n}=k_n\}}].
\]
According to the definition of the process $\Z_s$, we may deduce
:
\[
\lim_{\Delta_t\to 0}E_\Delta(k_1,\dots,k_n)=\E[f(-\Z_{s_1},\ldots,-\Z_{s_n})
1_{\{\N_{s_1}=k_1,\ldots,\N_{s_n}=k_n\}}].
\]
In order to obtain that
\[
\lim_{\Delta_t\to
0}\E[f(\tilde{Z}^\Delta_{s_1},\ldots,\tilde{Z}^\Delta_{s_n})]=\E[f(-\Z_{s_1},\ldots,-\Z_{s_n}
)],
\]
it suffices (cf the proof of Proposition \ref{convmarg}) to prove
that
\[
\sum_{k_1,\ldots, k_{n-1}}\sup_{\Delta_t}\vert
E_\Delta(k_1,\dots,k_n)\vert<\infty.
\]
Since $f$ is bounded,
\[
\vert E_\Delta(k_1,\dots,k_n)\vert\le M\P(N_{s_n}^\Delta=k_n)
\]
Using moreover \eqref{lebesgue} we get
\[
\sum_{k_1,\ldots, k_{n}}\vert E_\Delta(k_1,\dots,k_n)\vert\le M
e^{s_n}\sum_{k_n}(k_n)^{n-1}r^{k_n}<\infty
\]
since $r<1$.\end{proof} We are now able to complete the proof of Theorem \ref{cas1}. 
Since $(\tilde{Z}^\Delta_s)$ and $(\Z_s)$
are both continuous processes, the convergence of the process
$(\tilde{Z}^\Delta_s)$ to the process $(-\Z_s)$ will be proved as
soon as the following measure tension criterium (cf Theorem 8.3
p.56 in \cite{Billingsley99}) holds :  for all $\eps>0$ and
$\eta_0$, there exists some constants $\delta\in]0,1[$ and $\mu>0$
such that
\begin{equation}\label{tension}
\frac{1}{\delta} \P\Big(\sup_{s\le u\le s+\delta}\vert
\tilde{Z}^\Delta_{u}-\tilde{Z}^\Delta_s\vert\ge\eps
\Big)\le\eta_0,\quad\mbox{for any}\ \Delta_t\le\mu.
\end{equation}
Since $(\tilde{Z}^\Delta_s,\ s\ge 0)$ is the interpolated
persistent random walk, its slope is always equal to $1$ or $-1$.
Hence we obtain for any $(u,s)\in\Rset_+^2$,
\[
\vert \tilde{Z}^\Delta_u-\tilde{Z}^\Delta_s\vert \le\vert
u-s\vert.
\]
Consequently
\[
\sup_{s\le u\le s+\delta}\vert
\tilde{Z}^\Delta_u-\tilde{Z}^\Delta_s\vert \le \delta.
\]
By choosing $\delta=\eps/2$ we get the tension criterium and so
the convergence of the process $(\tilde{Z}^\Delta_s)$ to the
process $(-\Z_s)$.
\section{Two extensions of Theorem \ref{cas1}}
\label{extensions}
First of all, the extensions presented in this section concerns 
the regime $\Delta_x=\Delta_t$.
\subsection{The case when $(Y_t)$ takes $k$ values.}
Let us introduce our parameters. Let $k\ge 2$, $y_1,\ldots,y_k$ denote $k$ real 
numbers, and $(c(i,j);\ 1\le i,j\le k)$ a matrix so that 
\begin{equation}
\label{cadre}
c(i,j)\ge 0\ \mbox{for any}\ i\neq j,\ c(i,i)=0,\ \sum_{l=1}^k c(i,l)>0\ \forall i.
\end{equation}
We directly consider the asymptotic regime. Let $(Y_t)$ be a 
$\{y_1,\ldots, y_k\}$-valued Markov chain, with transition probability matrix:
\begin{eqnarray}
\label{gen:matrix}
\pi^\Delta(y_i,y_j)=\left\{\begin{array}{ll}
c(i,j)\Delta_t & i\neq j\\[5pt]
1-\Big( \sum_{l=1}^k c(i,l) \Big)\Delta_t & i=j,
                         \end{array}\right.
\end{eqnarray}
where $\Delta_t>0$ is supposed to be small so that
\[
c(i,j)\Delta_t\le 1,\quad \Big( \sum_{l=1}^k c(i,l) \Big)\Delta_t<1.
\]
Similarly to the case $k=2$ and $y_1=-1$, $y_2=1$, we are interested in 
the linear interpolation $(\tilde{Z}_s^\Delta;\ s\ge 0)$ of the process 
$(Z^\Delta_s;\ s\ge 0)$ defined by \eqref{scal}.
\begin{thm}
\label{the:ext1}
Suppose $Y_0=y_i$. Then $(\tilde{Z}^\Delta_s;\ s\ge 0)$ converges in 
distribution, as $\Delta_t\to 0$, to the process $\Big( \int_0^t R_sds;\ t\ge 0\Big)$ 
where $(R_s)$ is a $\{y_1,\ldots, y_k\}$-valued continuous-time Markov chain 
starting at level $y_i$, whose dynamic is the following: $(R_t)$ stays on 
level $y_i$ an exponential time with parameter $1/\Big( \sum_{l=1}^k c(j,l) \Big)$ 
and jumps to $y_{j'}$ ($j'\neq j$) with probability 
$c(j,j')/\Big( \sum_{l=1}^k c(j,l) \Big)$.
\end{thm}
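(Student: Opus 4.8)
The plan is to transcribe the proof of Theorem~\ref{cas1} from Section~\ref{sectionpreuve1}, replacing the deterministic alternation of signs by a genuine embedded jump chain. First I would introduce the successive times $(T_k)_{k\ge0}$ at which $(Y_t)$ changes value ($T_0=0$, $T_{k+1}=\inf\{t>T_k:\ Y_t\neq Y_{T_k}\}$), the holding times $A_k=T_k-T_{k-1}$, the embedded chain $J_k=Y_{T_k}$ with $J_0=y_i$, and the counting process $N_t=\sum_{j\ge1}1_{\{T_j\le t\}}$. Writing $\sigma_i=\sum_{l=1}^k c(i,l)$, on $\{N_t=m\}$ one has
\[
X_t=\sum_{j=0}^{m-1}J_j A_{j+1}+J_m\,(t-T_m+1),
\]
so that, after the space--time rescaling \eqref{scal}--\eqref{*7} with $\Delta_x=\Delta_t$, $\tilde Z^\Delta_s$ is \emph{exactly the same} affine functional of $(J_j)_{j\ge0}$ and $(\Delta_t A_{j+1})_{j\ge0}$ (with $m=N^\Delta_s$) as the one expressing $\int_0^s R_u\,du$ in terms of the jump chain and holding times of $(R_u)$. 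Here the candidate limit $(R_u)$ is the continuous-time chain built from the embedded chain with transition probabilities $c(i,j)/\sigma_i$ and, conditionally on it, independent holding times exponential with parameters $1/\sigma_{J_{j}}$; it is non-explosive because every exit rate is $\le\sigma_{\max}:=\max_i\sigma_i<\infty$, hence $\big(\int_0^s R_u\,du\big)_{s\ge0}$ is a well-defined continuous process.

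The key structural observation --- and this is precisely where $k\ge3$ differs from the case $k=2$, in which the embedded chain is deterministic --- is that the law of the discrete chain $(J_j)_{j\ge0}$ does \emph{not} depend on $\Delta_t$: by the Markov property of $(Y_t)$, from state $y_a$ the walk leaves after a $\mathrm{Geom}(\sigma_a\Delta_t)$ number of steps and then lands at $y_b$ with probability $c(a,b)\Delta_t/(\sigma_a\Delta_t)=c(a,b)/\sigma_a$, \emph{independently} of that holding time; and, conditionally on the whole trajectory $(J_j)_{j\ge0}$, the holding times $(A_j)_{j\ge1}$ are independent with $A_j\sim\mathrm{Geom}(\sigma_{J_{j-1}}\Delta_t)$. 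Consequently the computation \eqref{25**2} shows that, conditionally on $(J_j)_{j\ge0}$, $\Delta_t A_j$ converges in distribution to the exponential law of parameter $1/\sigma_{J_{j-1}}$, jointly in $j$ and conditionally independently. Combined with the $\Delta_t$-independence of $(J_j)$, this yields --- exactly as in Lemmas~\ref{convloi}--\ref{lem:convmarggen} --- the joint convergence in distribution of $\big((J_j)_{j\le m},(\Delta_t A_j)_{j\le m},N^\Delta_{s_1},\dots,N^\Delta_{s_n}\big)$ to the corresponding object attached to $(R_u)$; the only point to check, as there, is that the discontinuity set of the functional involved is Lebesgue-null, and since $(J_j)$ takes finitely many values one simply conditions on each value of $(J_0,\dots,J_{k_n})$ and is reduced to the continuous holding-time variables.

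With this at hand I would reproduce Propositions~\ref{convmarg} and \ref{convmarg_gen}: for fixed $0\le s_1\le\dots\le s_n$ and $f:\Rset^n\to\Rset$ bounded by $M$ and continuous, decompose $\E[f(\tilde Z^\Delta_{s_1},\dots,\tilde Z^\Delta_{s_n})]$ according to the values $(k_1,\dots,k_n)$ of $(N^\Delta_{s_1},\dots,N^\Delta_{s_n})$ (and of $(J_0,\dots,J_{k_n})$), pass to the limit in each summand using the previous paragraph --- each summand being an affine, hence continuous, function of the $\Delta_t A_j$ --- and dominate the tail by the analogue of \eqref{lebesgue}. Indeed, since $x\mapsto x/(1+x)$ is increasing, $e^{\Delta_t}-1\ge\Delta_t$, and $0<\sigma_i<\infty$ for all $i$ by \eqref{cadre}, the computation \eqref{25**2} gives $\E[e^{-\Delta_t A_j}\mid(J_l)_{l\ge0}]\le\sigma_{\max}/(1+\sigma_{\max})=:r<1$, whence $\P(N^\Delta_s=k)\le e^{s}r^{k}$ and $\sum_{k_1,\dots,k_n}\sup_{\Delta_t}|E_\Delta(k_1,\dots,k_n)|\le M e^{s_n}\sum_{k}k^{n-1}r^{k}<\infty$; dominated convergence then gives convergence of all finite-dimensional distributions of $(\tilde Z^\Delta_s)$ to those of $\big(\int_0^s R_u\,du\big)$.

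Finally, tightness is identical to Section~\ref{sectionpreuve1}: the interpolated walk has slopes in $\{y_1,\dots,y_k\}$, so $|\tilde Z^\Delta_u-\tilde Z^\Delta_s|\le(\max_i|y_i|)\,|u-s|$, and the tension criterion (Theorem~8.3 p.56 in \cite{Billingsley99}) holds with $\delta=\eps/(2\max_i|y_i|)$; since both processes are continuous, convergence of finite-dimensional distributions together with tightness gives the announced weak convergence in $C([0,\infty),\Rset)$. I expect the only genuine work, beyond this bookkeeping, to be the careful formulation and proof of the conditional version of Lemma~\ref{convloi} --- the joint convergence of the rescaled holding times \emph{given} the now non-trivial (but $\Delta_t$-free) embedded chain; once that is isolated, every remaining step is a transcription of the $k=2$ arguments.
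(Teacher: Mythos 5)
Your proposal is correct and follows essentially the same route as the paper: the paper also reduces to the joint law of the rescaled holding time and the landing site (via the Laplace transform $V^\Delta(\lambda,j)$ of $(\Delta_t T_1, Y_{T_1})$, which factorizes in the limit into an exponential time of parameter $1/\sum_l c(i,l)$ independent of a jump to $y_j$ with probability $c(i,j)/\sum_l c(i,l)$) and then invokes the machinery of Section \ref{sectionpreuve1}, leaving the details to the reader. Your version in fact supplies more of those details — notably the exact ($\Delta_t$-free) law of the embedded chain, the conditional independence of the geometric holding times, the tail bound $r=\sigma_{\max}/(1+\sigma_{\max})$, and the tightness constant $\max_i|y_i|$ — all of which are consistent with the paper's argument.
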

\begin{rem}
In the case $k=2$, $y_1=-1$ and $y_2=1$, then $((-1)^{\N_t};\ t\ge 0)$ 
(cf \eqref{**1.1}) may be chosen as a realization of $(R_t)$ when it starts at $R_0=-1$.
\end{rem}
\noindent
{\it Proof of Theorem \ref{the:ext1}.}
We proceed as in the proof of Theorem \ref{cas1} developed in Section \ref{sectionpreuve1}.\\
Let $(T_n)_{n\ge 1}$ be the sequence of stopping times defined by \eqref{*1}. Then:
\begin{eqnarray*}
X_t=\left\{\begin{array}{ll}
y_i(t+1) & 0\le t<T_1\\[5pt]
y_i T_1+Y_{T_1}(t-T_1+1) & T_1\le t <T_2.
           \end{array}\right.
\end{eqnarray*}
Recall that $(Z_s^\Delta;\ s/\Delta_t\in\Nset)$ has been defined by \eqref{*5}. 
From the relations above, it is easy to deduce:
\begin{eqnarray*}
Z_s^\Delta=\left\{\begin{array}{l}
y_i(s+\Delta_t)\quad 0\le s\le \Delta_t T_1\\[5pt]
y_i(\Delta_t T_1)+Y_{T_1}(s-\Delta_t T_1+\Delta_t)\quad \Delta_t T_1\le s<\Delta_t T_2.
                  \end{array}\right.
\end{eqnarray*}
Let us determine the limit distribution of $(\Delta_t T_1, Y_{T_1})$ as $\Delta_t\to 0$. Set
\[
V^\Delta(\lambda, j)=\E\Big[ e^{-\lambda \Delta_t T_1}1_{\{ Y_{T_1}=y_{j} \}} \Big],
\quad \lambda>0,\ j\neq i.
\]
Proceeding as in the proof of Lemma \ref{convloi}, we obtain:
\[
V^\Delta(\lambda, j)=\frac{e^{-\lambda\Delta_t} c(i,j)\Delta_t}{1-\Big[1-\Big( \sum_{l=1}^k c(i,l)\Big)\Delta_t\Big]e^{-\lambda\Delta_t}}.
\]
Using standard analysis, we deduce that $(\Delta_t T_1, Y_{T_1})$ converges in 
distribution as $\Delta_t\to 0$ to $(e'_1, U_1)$ where:
\[
\E\Big[ e^{-\lambda e'_1}1_{\{ U_1=j\}} \Big]=\frac{c(i,j)}{\lambda+\sum_{l=1}^k c(i,l)}.
\]
As a result, $e'_1$ and $U_1$ are independent, $e'_1$ is exponentially distributed 
with parameter $1/\sum_{l=1}^k c(i,l)$ and
\[
\P(U_1=j)=\frac{c(i,j)}{\sum_{l=1}^k c(i,l)}.
\]
Using the approach developed in Section \ref{sectionpreuve1}, we can 
prove Theorem \ref{the:ext1}. The details are left to the reader.
\hfill{$\Box$}
\subsection{The case when $(Y_t)$ is a Markov chain of order $2$.}
\label{ext2}
Let $(Y_t)$ be a Markov chain with order $2$. For simplicity we suppose that 
it takes its values in $\{-1,1\}$. Obviously $(Y_t,Y_{t+1})_{t\ge 0}$ is a 
Markov chain with state space 
\[
E=\{(-1,-1),(-1,1),(1,-1),(1,1)\}.
\] 
Let $\pi^\Delta$ be the transition probability matrix:
\begin{eqnarray}
\label{matrixx}
\pi^\Delta=\left(\begin{array}{cccc}
1-c_0\Delta_t & c_0\Delta_t & 0 & 0\\
0 & 0 & 1-p_0 & p_0\\
p_1 & 1-p_1 & 0 & 0\\
0 & 0 & c_1 \Delta_t & 1-c_1\Delta_t
                 \end{array}\right)
\end{eqnarray}
where $\Delta_t,c_0,c_1,p_0,p_1>0$ and $c_0\Delta_t, c_1\Delta_t, p_0, p_1<1$. \\
Let us introduce:
\begin{equation}
\label{defv}
v_i=\frac{p_i}{1-(1-p_0)(1-p_1)},\quad c'_i=c_iv_i,\ i=0,1.
\end{equation}
Recall that $(Z^\Delta_t)$ and $(\tilde{Z}^\Delta_t)$ have been defined by \eqref{4.4b}, 
resp. \eqref{*7}, $(\N_t)$ is the counting process defined by \eqref{**1.1}, and
\[
\Z_t=\int_0^t (-1)^{\N_u}du,\ t\ge 0.
\]
\begin{thm}
\label{the:ext2}
1) Suppose that $Y_0=Y_1=-1$ (resp. $Y_0=Y_1=1$) then $(\tilde{Z}^\Delta_s;\ s\ge 0)$ 
converges in distribution, as $\Delta_t\to 0$, to $(-Z^{c'_0,c'_1}_s;\ s\ge 0)$ 
(resp. $(Z^{c'_1,c'_0}_s;\ s\ge 0)$).\\
2) Suppose $Y_0=1$ and $Y_1=-1$ (resp. $Y_0=-1$, $Y_1=1$) then 
$(\tilde{Z}^\Delta_s;\ s\ge 0)$ converges in distribution, as $\Delta_t\to 0$, to 
\[
\Big((\epsilon-1)\int_0^s(-1)^{\NN_u}du+\epsilon\int_0^s(-1)^{\nn_u}du;\ s\ge 0  \Big)
\]
where $\epsilon$ is independent from $(\NN_u)$, $(\nn_u)$ and 
\[
\P(\epsilon=0)=1-\P(\epsilon=1)=v_1\quad (\mbox{resp.}\ \P(\epsilon=1)=1-\P(\epsilon=0)=v_0).
\]
\end{thm}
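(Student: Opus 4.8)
The plan is to transpose the scheme of Section \ref{sectionpreuve1} to the four-state Markov chain $\big((Y_t,Y_{t+1}),\,t\ge 0\big)$ on $E=\{(-1,-1),(-1,1),(1,-1),(1,1)\}$. The structural fact that drives the proof is that $E$ splits into two \emph{corner} states, $(-1,-1)$ and $(1,1)$, at which the chain performs geometric sojourns of parameter $c_0\Delta_t$ (resp.\ $c_1\Delta_t$), and two \emph{middle} states, $(-1,1)$ and $(1,-1)$, which form a transient sub-chain: from a middle state the chain leaves, after a number of steps stochastically dominated by a geometric variable of parameter $\min(p_0,p_1)>0$ that does \emph{not} depend on $\Delta_t$, either towards $(-1,-1)$ or towards $(1,1)$. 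Hence a middle excursion lasts $O(1)$ elementary steps, i.e.\ $O(\Delta_t)$ units of rescaled time, and since $|Y_t|=1$ it alters the walk $X$ by $O(1)$, i.e.\ $\tilde{Z}^\Delta$ by $O(\Delta_t)$. Up to any fixed rescaled time the number of such middle excursions is tight, so their cumulated contribution---both in time and in space---tends to $0$ in probability, and one may work with the process obtained by keeping only the ``macroscopic'' sojourns at the two corners.

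Next I would identify the law of a macroscopic sojourn. Starting from $(-1,-1)$, the chain makes a geometric$(c_0\Delta_t)$ sojourn, then a middle excursion which---by an elementary absorption computation for the two-state middle sub-chain---returns to $(-1,-1)$ with probability $1-v_0$ and reaches $(1,1)$ with probability $v_0$; this is precisely where the constant $v_0=p_0/\big(1-(1-p_0)(1-p_1)\big)$ of \eqref{defv} enters. The pattern then repeats, so the number of elementary $(-1,-1)$-sojourns performed before the chain escapes to $(1,1)$ is geometric with parameter $v_0$ and is independent of the sojourn lengths. By Lemma \ref{convloi} (computation \eqref{25**2}) each elementary sojourn, rescaled by $\Delta_t$, converges to an exponential variable of mean $1/c_0$, and a geometric$(v_0)$ sum of i.i.d.\ exponentials of mean $1/c_0$ is exponential of mean $1/(c_0v_0)=1/c'_0$. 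Combining this with the strong Markov property of $\big((Y_t,Y_{t+1})\big)$ at the successive corner-hitting times---alternating the roles of $(c_0,v_0)$ and $(c_1,v_1)$---gives, when $Y_0=Y_1=-1$, the joint convergence of the rescaled successive macroscopic sojourn times to the inter-jump times $\lambda_1e_1,\lambda_2e_2,\dots$ of $\NN$ (with $\lambda_k$ as in \eqref{.*.} for the pair $(c'_0,c'_1)$), and to those of $\nn$ when $Y_0=Y_1=1$.

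From here the arguments of Section \ref{sectionpreuve1} carry over: the analogues of Lemmas \ref{lem:convmarg} and \ref{lem:convmarggen} give the joint convergence of the rescaled macroscopic sojourns together with the associated counting process, the analogues of Propositions \ref{convmarg} and \ref{convmarg_gen} give the convergence of every finite-dimensional marginal of $\tilde{Z}^\Delta$ (the geometric domination \eqref{lebesgue} still holds, since the small parameters $c_i\Delta_t$ only force the sojourns to be long), the global sign being $-1$ when the first corner visited is $(-1,-1)$ and $+1$ when it is $(1,1)$; and tightness follows exactly as in \eqref{tension} from the fact that $\tilde{Z}^\Delta$ is $1$-Lipschitz. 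This proves part 1).

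For part 2) the chain starts from a middle state, say $(1,-1)$ if $Y_0=1$, $Y_1=-1$. Let $\sigma$ be the first time the chain occupies a corner and put $\epsilon=1$ on $\{(Y_\sigma,Y_{\sigma+1})=(1,1)\}$ and $\epsilon=0$ on $\{(Y_\sigma,Y_{\sigma+1})=(-1,-1)\}$. The same absorption computation gives $\P(\epsilon=0)\to v_1$, $\P(\epsilon=1)\to 1-v_1$ (and, symmetrically, $\P(\epsilon=1)\to v_0$ when $Y_0=-1$, $Y_1=1$). Since $\sigma=O(1)$ steps, both $\Delta_t\sigma$ and $\Delta_t X_\sigma$ tend to $0$, so the induced time- and space-shifts are asymptotically negligible; by the strong Markov property at $\sigma$ and part 1), conditionally on $\{\epsilon=0\}$ (resp.\ $\{\epsilon=1\}$) the process $\tilde{Z}^\Delta$ converges to $-\ZZ$ (resp.\ to $\zz$), and $\epsilon$ becomes asymptotically independent of the post-$\sigma$ evolution. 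Mixing the two limits with weights $v_1$ and $1-v_1$ is exactly the process $(\epsilon-1)\int_0^s(-1)^{\NN_u}\,du+\epsilon\int_0^s(-1)^{\nn_u}\,du$ with $\epsilon$ independent of $(\NN_u)$ and $(\nn_u)$, which is the announced limit. The main obstacle is the content of the first two paragraphs---making rigorous, uniformly in $\Delta_t$, both the negligibility and the non-accumulation of the middle excursions and the renewal decomposition producing the geometric$(v_i)$ sums, hence the effective rates $c'_i=c_iv_i$; once this is settled the remainder is a routine re-run of the estimates of Section \ref{sectionpreuve1}.
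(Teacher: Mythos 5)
Your proposal is correct and follows essentially the same route as the paper: decompose the four-state pair chain $\big((Y_t,Y_{t+1})\big)$ into corner sojourns and $O(1)$-step middle excursions, show the latter are negligible in time and space after rescaling, and obtain the effective rates $c_i'=c_iv_i$ from the absorption probabilities $v_i$ of the middle sub-chain. The only cosmetic difference is that you perform the geometric compounding of the corner sojourns at the discrete level, whereas the paper first passes to the limiting $\{-1,1\}$-valued chain with self-loops and then invokes the classical thinning identity; you also spell out part 2), which the paper treats only implicitly by symmetry.
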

\begin{rem}
1) Note that $(Y_t)_{t\in\Nset}$ is a Markov chain if and only if $1-c_0\Delta_t=p_1$ 
and $1-c_1\Delta_t=p_0$. If we replace formally $p_0$ (resp. $p_1$) by $1-c_1\Delta_t$ 
(resp. $1-c_0\Delta_t$) in \eqref{defv} and take the limit $\Delta_t\to 0$, we obtain 
$v_i=p_i$ and $c'_i=c_i$. We recover Theorem \ref{cas1}.\\
2) The fact that $(Y_t)$ is a Markov chain with order $2$ does not modify drastically 
the limit. The limit process can be expressed in terms of processes of the 
type $(Z_s^{\alpha,\beta};\ s\ge 0)$.
\end{rem}
\noindent
{\it Proof of Theorem \ref{the:ext2}.} {\bf 1)} We only consider the case $Y_0=Y_1=1$. 
Let us define $T_1$, $T_2$ and $T_3$ as follows:
\[
T_1=\inf\{ t\ge 1,\ Y_t=-1 \},\quad T_2=\inf\{ t\ge T_1+1,\ Y_t=Y_{t-1} \},
\quad T_3=\inf\{ t\ge T_2+1,\ Y_t\neq Y_{T_2} \}.
\]
Using the definition (cf \eqref{25*1}) of $(X_t)$ we easely obtain:
\begin{eqnarray*}
X_t=\left\{\begin{array}{l}
t+1\quad 0\le t<T_1\\
T_1+\hat{X}_t\quad T_1\le t<T_2
           \end{array}\right.
\end{eqnarray*}
where $\hat{X}_t$ equals either $-1$ or $0$.\\
Moreover, when $T_2\le t<T_3$, we have:
\begin{eqnarray*}
X_t=\left\{\begin{array}{l}
T_1-2-(t-T_2)\quad \mbox{if}\ T_2-T_1\ \mbox{is odd}\\
T_1+1+(t-T_2)\quad \mbox{otherwise.}
           \end{array}\right.
\end{eqnarray*}
According to \eqref{scal}, we can deduce:
\begin{eqnarray*}
Z_s^\Delta=\left\{\begin{array}{ll}
s+\Delta_t & 0\le s\le \Delta_t T_1\\[5pt]
\Delta_t T_1+\Delta_t\hat{X}_{s/\Delta_t}& \Delta_t T_1\le s<\Delta_t T_2\\[5pt]
\Delta_t T_1-2\Delta_t-(s-\Delta_t T_2) & \Delta_t T_2\le s<\Delta_t T_3,\ Y_{T_2}=-1\\[5pt]
\Delta_t T_1+\Delta_t+s-\Delta_t T_2 & \Delta_t T_2\le s<\Delta_t T_3,\ Y_{T_2}=1
                \end{array}\right.
\end{eqnarray*}
(note that $T_2-T_1$ is odd if and only if $Y_{T_2}=-1$).\\
{\bf 2) a)} Proceeding as in the proof of Theorem \ref{cas1}, we can prove 
that $\Delta_t T_1$ converges in distribution, as $\Delta_t\to 0$, to $e'_1$, 
where $e'_1$ is exponentially distributed with parameter $1/c_1$. 
Then $(\tilde{Z}_s^\Delta; \ 0\le s\le \Delta_t T_1)\stackrel{(d)}{\longrightarrow}
(s;\ s\le e'_1)$, as $\Delta_t\to 0$.\\
{\bf b)} The distribution of $T_2-T_1$ does not depend on $\Delta_t$.
 Moreover $\vert\hat{X}_{\cdot}\vert\le 1$, then the limit of 
 the length of the interval $[\Delta_t T_1,\Delta_t T_2]$ is null. We have
\[
\P( Y_{T_2}=-1)=\sum_{l\ge 0} \Big( (1-p_1)(1-p_0) \Big)^l p_1=v_1.
\]
{\bf c)} Using the strong Markov property, we easely show that 
$(\tilde{Z}_{s+\Delta_t T_2}^\Delta; \ 0\le s\le \Delta_t (T_3-T_1))\stackrel{(d)}{\longrightarrow}
(e'_1+Y_{T_1}s;\ 0\le s\le e'_2)$, as $\Delta_t\to 0$, where $(e'_1, Y_{T_1})$ 
(resp. $(e'_1,e'_2)$) are independent r.v.'s and conditionally on $Y_{T_2}=1$ 
(resp. $Y_{T_2}=-1$) $e'_2$ is exponentially distributed with parameter $1/c_1$ 
(resp. $1/c_0$).\\
{\bf d)} Let us summarize the former analysis. We have proved that  
$(\tilde{Z}_s^\Delta;\ s\ge 0)\stackrel{(d)}{\longrightarrow}(\int_0^s \hat{R}_udu,\ s\ge 0)$,
where $(\hat{R}_u)$ is a continuous-time Markov chain which takes its values in 
$\{-1,1\}$ and $\hat{R}_0=1$. Moreover the dynamic of $(\hat{R}_u)$ is the following:
$(\hat{R}_u)$ stays in $1$ (resp. $-1$) an exponential time with parameter $1/c_1$ 
(resp. $1/c_0$) and moves to $-1$ (resp. $1$) with probability $v_1$ (resp. $v_0$). 
Note that $(\hat{R}_u)$ is allowed to stay in the same site. It is classical 
(cf \cite{ross03}) to prove that $(\hat{R}_u)_{u\ge 0}\stackrel{(d)}{=}(\zz_u)_{u\ge 0}$ 
where $c'_0$ and $c'_1$ are defined by \eqref{defv}.\hfill{$\Box$}

\section{Convergence of the persistent random walk towards the Brownian motion with drift}
\label{section preuve2} In subsection \ref{mgf} below we determine
the generating function of $X_t$, where $X_t$ is the persistent
random walk defined by \eqref{25*1}. This allows to prove Theorem
\ref{conv1} and Proposition \ref{cas_limit} in subsections \ref{***}, \ref{scas_limit}.
\mathversion{bold}
\subsection{The moment generating function of $X_t$}\label{mgf}
\mathversion{normal} Let us recall that the increments process
$(Y_t,\, t\in\mathbb{N})$ is a Markov chain valued in the state
space $E=\{-1,1\}$. Its transition probability is given by
\begin{eqnarray*}
\pi=\left(\begin{array}{cc} 1-\alpha & \alpha\\
\beta & 1-\beta\end{array}\right)\quad\quad 0<\alpha<1,\quad
0<\beta<1.
\end{eqnarray*}
The persistent random walk $(X_t,\, t\in\mathbb{N})$ is defined by
the partial sum:
\[
X_t=\sum_{i=0}^t Y_i\quad\mbox{with}\quad X_0=Y_0=1\ \mbox{or}\
-1.
\]
\begin{lem}
\label{***1} Let us define the functions $a_t$ and $b_t$:
\begin{equation}
\label{eq:***1}
a_t(j)=\P(X_t=j, Y_t=-1)\quad\mbox{and}\quad b_t(j)=\P(X_t=j,
Y_t=1).
\end{equation}
Then,
\begin{equation}
\label{aj} a_{t+1}(j)=(1-\alpha)a_t(j+1)+\beta b_t(j+1)
\end{equation}
\begin{equation}
\label{bj} b_{t+1}(j)=\alpha a_t(j-1)+(1-\beta)b_t(j-1).
\end{equation}
\end{lem}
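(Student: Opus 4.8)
The plan is to perform a one-step analysis of the Markov chain $(Y_t)$, exploiting the relation $X_{t+1}=X_t+Y_{t+1}$ together with the Markov property of $(Y_t)$. Since $X_t$ is $\mathcal{F}_t$-measurable and the conditional law of $Y_{t+1}$ given $\mathcal{F}_t=\sigma(Y_0,\dots,Y_t)$ depends only on $Y_t$ through the matrix $\pi$, one has $\P(Y_{t+1}=y\mid X_t,Y_t)=\pi(Y_t,y)$; this is the only probabilistic ingredient needed.

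First I would fix $j\in\Zset$ and write $a_{t+1}(j)=\P(X_{t+1}=j,\,Y_{t+1}=-1)$. On the event $\{Y_{t+1}=-1\}$ we have $X_{t+1}=X_t-1$, hence $X_t=j+1$. Decomposing according to the value of $Y_t\in\{-1,1\}$ and then conditioning on $\mathcal{F}_t$ gives
\[
a_{t+1}(j)=\P(X_t=j+1,Y_t=-1)\,\pi(-1,-1)+\P(X_t=j+1,Y_t=1)\,\pi(1,-1),
\]
which is exactly $(1-\alpha)a_t(j+1)+\beta b_t(j+1)$, i.e. \eqref{aj}.

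The computation for $b_{t+1}(j)$ is symmetric: on $\{Y_{t+1}=1\}$ one has $X_{t+1}=X_t+1$, so $X_t=j-1$, and the same conditioning yields
\[
b_{t+1}(j)=\P(X_t=j-1,Y_t=-1)\,\pi(-1,1)+\P(X_t=j-1,Y_t=1)\,\pi(1,1)=\alpha a_t(j-1)+(1-\beta)b_t(j-1),
\]
which is \eqref{bj}. There is essentially no obstacle here beyond bookkeeping; the only point that deserves a line of justification is the identity $\P(Y_{t+1}=y\mid X_t,Y_t)=\pi(Y_t,y)$, which holds because $\sigma(X_t,Y_t)$ is a sub-$\sigma$-algebra of $\mathcal{F}_t$ and $(Y_t)$ is a Markov chain with transition matrix $\pi$.
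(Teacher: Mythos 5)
Your proof is correct and follows essentially the same route as the paper: decompose on the value of $Y_t$, use $X_{t+1}=X_t+Y_{t+1}$ to rewrite the event $\{X_{t+1}=j,\,Y_{t+1}=\pm1\}$ in terms of $X_t$, and apply the Markov property of $(Y_t)$ to extract the entries of $\pi$. Your additional remark justifying $\P(Y_{t+1}=y\mid X_t,Y_t)=\pi(Y_t,y)$ via $\sigma(X_t,Y_t)\subset\mathcal{F}_t$ is a welcome (if minor) precision that the paper leaves implicit.
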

\begin{proof} Using the Markov property of $(Y_t)$ we have:
\begin{eqnarray*}
 a_{t+1}(j)&=&\P(X_{t+1}=j, Y_{t+1}=-1,
Y_t=-1)+\P(X_{t+1}=j, Y_{t+1}=-1, Y_t=1)\\
&=&\P(X_{t}=j+1, Y_{t+1}=-1, Y_t=-1)+\P(X_{t}=j+1, Y_{t+1}=-1,
Y_t=1)\\
&=& (1-\alpha)a_t(j+1)+\beta b_t(j+1).
\end{eqnarray*}
The second recursive formula involving $(b_t(j))$ can be obtained
similarly.\end{proof} Let us define the moment generating function
$\Phi(\lambda,t)=\E[\lambda^{X_t}],\quad(\lambda>0)$. We decompose
$\Phi(\lambda,t)$ as
\begin{equation}
 \label{phi}
\Phi(\lambda,t)=\Phi_-(\lambda,t)+\Phi_+(\lambda,t),
\end{equation}
with
\begin{equation}
\label{phi_decomp}
\Phi_-(\lambda,t)=\E[\lambda^{X_t}1_{\{Y_t=-1\}}],\quad\quad
\Phi_+(\lambda,t)=\E[\lambda^{X_t}1_{\{Y_t=1\}}].
\end{equation}
\begin{lem} 1) $\Phi_-(\lambda,0)=\frac{1}{\lambda}\P(Y_0=-1)$
and
$\Phi_+(\lambda,0)=\lambda\P(Y_0=1)$.\\
2) The moment generating function verifies the following induction
equations:
\begin{eqnarray}
\label{rec1} \Phi_-(\lambda,t+1)=\frac{1-\alpha}{\lambda}\
\Phi_-(\lambda,t)+\frac{\beta}{\lambda}\ \Phi_+(\lambda,t)\\
\label{rec2}\Phi_+(\lambda,t+1)=\alpha\lambda
\Phi_-(\lambda,t)+(1-\beta)\lambda \Phi_+(\lambda,t)
\end{eqnarray}
\end{lem}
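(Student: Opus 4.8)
The plan is to obtain the induction equations of part 2) directly from the recursions for $(a_t(j))$ and $(b_t(j))$ established in Lemma \ref{***1}, by passing to generating functions; I will also record the short self-contained probabilistic argument. Before starting, I note that for each fixed $t$ the walk $X_t=\sum_{i=0}^tY_i$ satisfies $|X_t|\le t+1$, hence takes finitely many values, so the series defining $\Phi_-(\lambda,t)$ and $\Phi_+(\lambda,t)$ are finite sums and every index shift below is unproblematic for any $\lambda>0$.

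For part 1), I would simply use the initial condition $X_0=Y_0\in\{-1,1\}$ from \eqref{25*1}: on $\{Y_0=-1\}$ we have $\lambda^{X_0}=\lambda^{-1}$ and on $\{Y_0=1\}$ we have $\lambda^{X_0}=\lambda$, so by \eqref{phi_decomp}, $\Phi_-(\lambda,0)=\E[\lambda^{-1}1_{\{Y_0=-1\}}]=\lambda^{-1}\P(Y_0=-1)$ and $\Phi_+(\lambda,0)=\E[\lambda\,1_{\{Y_0=1\}}]=\lambda\,\P(Y_0=1)$.

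For part 2), I would write, using \eqref{eq:***1} and \eqref{phi_decomp}, $\Phi_-(\lambda,t)=\sum_j a_t(j)\lambda^j$ and $\Phi_+(\lambda,t)=\sum_j b_t(j)\lambda^j$, multiply the recursion \eqref{aj} by $\lambda^j$, sum over $j$ and shift the summation index:
\[
\Phi_-(\lambda,t+1)=\sum_j\big((1-\alpha)a_t(j+1)+\beta b_t(j+1)\big)\lambda^j=\frac{1-\alpha}{\lambda}\sum_j a_t(j+1)\lambda^{j+1}+\frac{\beta}{\lambda}\sum_j b_t(j+1)\lambda^{j+1},
\]
which is exactly \eqref{rec1}. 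Applying the same operation to \eqref{bj} — where the index shift now produces a factor $\lambda$ instead of $\lambda^{-1}$ — gives \eqref{rec2}.

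Alternatively, one can bypass Lemma \ref{***1}: from $X_{t+1}=X_t+Y_{t+1}$ one has $\lambda^{X_{t+1}}1_{\{Y_{t+1}=-1\}}=\lambda^{-1}\lambda^{X_t}1_{\{Y_{t+1}=-1\}}$; splitting the event $\{Y_{t+1}=-1\}$ over the two values of $Y_t$, using that $\lambda^{X_t}$ and $1_{\{Y_t=\pm1\}}$ are $\mathcal{F}_t$-measurable, and applying the Markov property $\P(Y_{t+1}=-1\mid\mathcal{F}_t)=(1-\alpha)1_{\{Y_t=-1\}}+\beta\,1_{\{Y_t=1\}}$ yields \eqref{rec1}; symmetrically, $\lambda^{X_{t+1}}1_{\{Y_{t+1}=1\}}=\lambda\,\lambda^{X_t}1_{\{Y_{t+1}=1\}}$ together with $\P(Y_{t+1}=1\mid\mathcal{F}_t)=\alpha\,1_{\{Y_t=-1\}}+(1-\beta)1_{\{Y_t=1\}}$ yields \eqref{rec2}. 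There is no genuine obstacle in this lemma; the only point that deserves a word is the finiteness of the sums defining $\Phi_\pm(\lambda,t)$, which I addressed at the outset and which guarantees all the manipulations are valid.
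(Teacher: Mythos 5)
Your proof is correct and follows essentially the same route as the paper: part 1) from the initial condition, and part 2) by multiplying the recursions of Lemma \ref{***1} by $\lambda^j$, summing over $j$ and shifting the index (the paper likewise notes the sum is finite, running over $j\in\{-t-1,\dots,t+1\}$). The alternative conditioning argument you sketch is a valid bonus but not needed.
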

\noindent \begin{proof} Definition \eqref{eq:***1} implies that
$$\Phi_-(\lambda,t)=\sum_{j\in\mathbb{Z}}\lambda^j
a_t(j)=\sum_{j=-t-1}^{t+1}\lambda^j a_t(j).$$ Hence,
\begin{eqnarray*}
\Phi_-(\lambda,t+1)&=&\sum_j \lambda^j
a_{t+1}(j)=(1-\alpha)\sum_j\lambda^j a_t(j+1)+\beta\sum_j
\lambda^j
b_t(j+1)\\
&=&(1-\alpha)\frac{1}{\lambda}\sum_j
\lambda^{j+1}a_t(j+1)+\frac{\beta}{\lambda}\sum_j
\lambda^{j+1}b_t(j+1)\\
&=&\frac{1-\alpha}{\lambda}\Phi_-(\lambda,t)+\frac{\beta}{\lambda}\Phi_+(\lambda,t).
\end{eqnarray*}
The proof of \eqref{rec2} is similar.\end{proof}
\begin{lem}\label{***3}
Let $f(\lambda,t)$ be equal to either $\Phi_-(\lambda,t)$ or $\Phi_+(\lambda,t)$, then
\begin{equation}
\label{seconddegr}
f(\lambda,t+2)-\Big(\frac{1-\alpha}{\lambda}+(1-\beta)\lambda\Big)f(\lambda,t+1)
+(1-\alpha-\beta)f(\lambda,t)=0.
\end{equation}
\end{lem}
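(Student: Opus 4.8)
The plan is to read the pair of first–order recursions \eqref{rec1}--\eqref{rec2} as a single two–dimensional linear recursion. Put $V_t=\big(\Phi_-(\lambda,t),\Phi_+(\lambda,t)\big)^{\top}$ and
\[
M=\begin{pmatrix}\dfrac{1-\alpha}{\lambda} & \dfrac{\beta}{\lambda}\\[8pt]\alpha\lambda & (1-\beta)\lambda\end{pmatrix},
\]
so that \eqref{rec1}--\eqref{rec2} say exactly $V_{t+1}=MV_t$. By the Cayley--Hamilton theorem, $M^2=(\operatorname{tr}M)\,M-(\det M)\,I$, hence $V_{t+2}=(\operatorname{tr}M)\,V_{t+1}-(\det M)\,V_t$. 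Reading off either coordinate of this vector identity yields the \emph{same} scalar recursion for $\Phi_-(\lambda,\cdot)$ and for $\Phi_+(\lambda,\cdot)$, which is precisely why the statement holds for $f$ equal to either one: both components evolve under the common characteristic polynomial of $M$.

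It then remains to compute the two coefficients. One finds $\operatorname{tr}M=\dfrac{1-\alpha}{\lambda}+(1-\beta)\lambda$ and
\[
\det M=\frac{1-\alpha}{\lambda}\cdot(1-\beta)\lambda-\frac{\beta}{\lambda}\cdot\alpha\lambda=(1-\alpha)(1-\beta)-\alpha\beta=1-\alpha-\beta .
\]
Substituting these into $V_{t+2}=(\operatorname{tr}M)V_{t+1}-(\det M)V_t$ gives, coordinatewise,
\[
f(\lambda,t+2)-\Big(\tfrac{1-\alpha}{\lambda}+(1-\beta)\lambda\Big)f(\lambda,t+1)+(1-\alpha-\beta)f(\lambda,t)=0,
\]
which is \eqref{seconddegr}.

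If one prefers to avoid the matrix formalism, the same result follows by direct elimination: solve \eqref{rec1} for $\beta\Phi_+(\lambda,t)$, write \eqref{rec1} at time $t+1$, and use \eqref{rec2} to replace $\Phi_+(\lambda,t+1)$; after dividing by $\lambda$ one obtains the recursion for $\Phi_-$, and the symmetric manipulation (eliminating $\Phi_-$) gives it for $\Phi_+$. Either way there is no real obstacle here — the argument is a short computation; the only point that needs a moment's care is the sign in the constant term, namely $\alpha\beta-(1-\alpha)(1-\beta)=-(1-\alpha-\beta)$.
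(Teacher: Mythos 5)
Your proof is correct, and it takes a somewhat different route from the paper's. The paper proceeds by explicit elimination: it solves \eqref{rec1} for $\Phi_+(\lambda,t)$ in terms of $\Phi_-(\lambda,t+1)$ and $\Phi_-(\lambda,t)$, shifts $t$ to $t+1$, substitutes both expressions into \eqref{rec2}, and collects terms to get \eqref{seconddegr} for $\Phi_-$; the case of $\Phi_+$ is then declared ``similar and left to the reader.'' You instead package \eqref{rec1}--\eqref{rec2} as a single vector recursion $V_{t+1}=MV_t$ and invoke Cayley--Hamilton, so that $V_{t+2}=(\operatorname{tr}M)V_{t+1}-(\det M)V_t$ holds coordinatewise; your computations $\operatorname{tr}M=\frac{1-\alpha}{\lambda}+(1-\beta)\lambda$ and $\det M=(1-\alpha)(1-\beta)-\alpha\beta=1-\alpha-\beta$ are both right. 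What your version buys is that both cases ($f=\Phi_-$ and $f=\Phi_+$) are handled simultaneously rather than by a repeated symmetric manipulation, and it makes structurally transparent why the characteristic polynomial \eqref{polybis} governs the sequel: $\theta_\pm$ are just the eigenvalues of $M$. The elimination argument you sketch at the end as a fallback is essentially the paper's own proof, so nothing is missing.
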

\noindent \begin{proof} By \eqref{rec1}, we get
\begin{equation}
\label{poly}
\Phi_+(\lambda,t)=\Big\{\Phi_-(\lambda,t+1)-\frac{1-\alpha}{\lambda}\Phi_-(\lambda,t)\Big\}
\frac{\lambda}{\beta}.
\end{equation}
Replacing $t$ by $t+1$ in \eqref{poly}, we obtain
\begin{equation}\label{poly_2}
\Phi_+(\lambda,t+1)=\Big\{\Phi_-(\lambda,t+2)-\frac{1-\alpha}{\lambda}
\Phi_-(\lambda,t+1)\Big\}\frac{\lambda}{\beta}.
\end{equation}
Using successively \eqref{rec2}, \eqref{poly_2} and
\eqref{poly}, we have:
\begin{eqnarray*}
\alpha\lambda\Phi_-(\lambda,t)&=&\Phi_+(\lambda,t+1)-(1-\beta)\lambda\Phi_+(\lambda,t)\\
&=&\frac{\lambda}{\beta}\left\{\Phi_-(\lambda,t+2)-\frac{1-\alpha}{\lambda}\Phi_-(\lambda,t+1)
\right\}\\
&&-\frac{1-\beta}{\beta}\lambda^2\left\{\Phi_-(\lambda,t+1)-\frac{1-\alpha}{\lambda}
\Phi_-(\lambda,t)\right\}.
\end{eqnarray*}
Finally
\[
\Phi_-(\lambda,t+2)-\left(\frac{1-\alpha}{\lambda}+(1-\beta)\lambda
\right)\Phi_-(\lambda,t+1)+\Big((1-\alpha)(1-\beta)-\alpha\beta\Big)\Phi_-(\lambda,t)=0.
\]
The proof concerning $f(\lambda,t)=\Phi_-(\lambda,t)$ is similar and is left to the reader.
\end{proof}
In order to obtain the explicit form of $\Phi_-(\lambda,t)$ and $\Phi_+(\lambda,t)$ 
in terms of $\lambda$ and $t$, it suffices to compute the roots $\theta_-$ and $\theta_+$ 
of the following polynomial
\begin{equation}
\label{polybis}
\theta^2-\left(\frac{1-\alpha}{\lambda}+(1-\beta)\lambda\right)\theta+1-\alpha-\beta=0
\end{equation}
Its discriminant equals
\begin{equation}
\label{5A}
\mathcal{D}=\Big(\frac{1-\alpha}{\lambda}+(1-\beta)\lambda
\Big)^2-4(1-\alpha-\beta).
\end{equation}
It is clear that
\begin{eqnarray}
\label{5AA}
\mathcal{D}&=&\Big(\frac{1-\alpha}{\lambda}+(1-\beta)\lambda+2\sqrt{\rho}\Big)
\Big(\frac{1-\alpha}{\lambda}+(1-\beta)\lambda-2\sqrt{\rho}\Big)\nonumber\\
&=&\frac{1}{\lambda}\Big(\frac{1-\alpha}{\lambda}+(1-\beta)\lambda+2\sqrt{\rho}\Big)
\Big((1-\beta)\lambda^2-2\sqrt{\rho}\lambda+1-\alpha\Big).
\end{eqnarray}
Since the discriminant of $\lambda\to (1-\beta)\lambda^2-2\sqrt{\rho}\lambda+1-\alpha$ 
is equal to $-4\alpha\beta$ then $\mathcal{D}>0$ for any $\lambda>0$.\\
Consequently the roots of \eqref{polybis} are:
\begin{equation}\label{B1}
\theta_\pm=\frac{1}{2}\Big(\frac{1-\alpha}{\lambda}+(1-\beta)\lambda
\pm\sqrt{\mathcal{D}}\Big).
\end{equation}
We deduce the following result.
\begin{prop}\label{a+a-1}
1) The moment generating
function $\Phi(\lambda,t)$ satisfies
\begin{equation}
\label{puiss} \Phi(\lambda,t)=a_+\theta_+^t+a_-\theta_-^t
\end{equation}
with \[
a_+=\frac{1-\alpha+\lambda(\lambda\alpha-\theta_-)}{\lambda^2\sqrt{\mathcal{D}}}
\quad\quad\mbox{and}\quad
a_-=\frac{1}{\lambda}-a_+\quad\mbox{if}\ X_0=Y_0=-1
\]
and \[
a_+=\frac{(1-\beta)\lambda^2+\beta-\lambda\theta_-}{\sqrt{\mathcal{D}}}\quad
\quad\mbox{and}\quad
a_-=\lambda-a_+\quad\mbox{if}\ X_0=Y_0=1.
\]
\end{prop}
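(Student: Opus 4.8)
The plan is to exploit Lemma~\ref{***3}. Since both $\Phi_-(\lambda,\cdot)$ and $\Phi_+(\lambda,\cdot)$ satisfy the linear recurrence \eqref{seconddegr}, their sum $\Phi(\lambda,\cdot)$ satisfies it as well. The associated characteristic equation is exactly \eqref{polybis}, whose discriminant $\mathcal{D}$ was shown (just after \eqref{5AA}) to be strictly positive for every $\lambda>0$; hence the roots $\theta_\pm$ of \eqref{B1} are distinct, the sequences $(\theta_+^t)_{t\ge 0}$ and $(\theta_-^t)_{t\ge 0}$ form a basis of the solution space of \eqref{seconddegr}, and \eqref{puiss} holds for a unique pair $(a_+,a_-)$.

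To identify $a_+$ and $a_-$ I would impose the two initial conditions at $t=0$ and $t=1$. Directly from $X_0=Y_0$ one gets $\Phi(\lambda,0)=1/\lambda$ when $Y_0=-1$ and $\Phi(\lambda,0)=\lambda$ when $Y_0=1$. For $t=1$, note that $X_1=Y_0+Y_1$ where, conditionally on $Y_0$, $Y_1$ is drawn from the corresponding row of $\pi$; this gives $\Phi(\lambda,1)=(1-\alpha)\lambda^{-2}+\alpha$ when $Y_0=-1$ and $\Phi(\lambda,1)=(1-\beta)\lambda^2+\beta$ when $Y_0=1$. (Equivalently, one may start from part~1) of the preceding lemma and apply \eqref{rec1}--\eqref{rec2} once.)

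It then remains to solve the $2\times 2$ system $a_++a_-=\Phi(\lambda,0)$, $a_+\theta_++a_-\theta_-=\Phi(\lambda,1)$, which yields $a_+=\bigl(\Phi(\lambda,1)-\theta_-\Phi(\lambda,0)\bigr)/(\theta_+-\theta_-)$ and $a_-=\Phi(\lambda,0)-a_+$. Using $\theta_+-\theta_-=\sqrt{\mathcal{D}}$ from \eqref{B1} and substituting the values above, a short simplification gives the stated formulas. The only mildly delicate step is the rearrangement in the case $Y_0=-1$: one must clear the factor $\lambda^{-2}$ in the numerator to recognize it as $1-\alpha+\lambda(\lambda\alpha-\theta_-)$, after which $a_-=1/\lambda-a_+$ is immediate. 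I do not expect any genuine obstacle here; the argument is the standard solution of a constant-coefficient linear recurrence, and all the needed ingredients ($\mathcal{D}>0$, the roots $\theta_\pm$, the recurrence itself) are already in hand.
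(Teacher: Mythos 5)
Your proposal is correct and follows essentially the same route as the paper: deduce that $\Phi=\Phi_-+\Phi_+$ satisfies the second-order recurrence of Lemma~\ref{***3}, compute $\Phi(\lambda,0)$ and $\Phi(\lambda,1)$ from the initial condition and one step of the chain, and solve the resulting $2\times 2$ system using $\theta_+-\theta_-=\sqrt{\mathcal{D}}$. Your explicit remark that $\mathcal{D}>0$ guarantees distinct roots (hence a unique pair $(a_+,a_-)$) is the only detail the paper leaves as ``standard arguments.''
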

\begin{proof} Suppose that $X_0=Y_0=-1$. Let us first determine the values of 
the generating function at time $t=0$ and $t=1$:
\[
\Phi(\lambda,0)=\Phi_+(\lambda,0)+\Phi_-(\lambda,0)=\frac{1}{\lambda}\P(Y_0=-1)
+\lambda\P(Y_0=1)=\frac{1}{\lambda}=a_++a_-
\]
Moreover, using \eqref{rec1} and \eqref{rec2} with $t=0$, we get
\[
\Phi(\lambda,1)=\Phi_+(\lambda,1)+\Phi_-(\lambda,1)=\Big(\frac{1-\alpha}{\lambda}
+\alpha\lambda
\Big)\Phi_-(\lambda,0)=\frac{1-\alpha}{\lambda^2}+\alpha=
a_+\theta_++ a_-\theta_-.
\]
It is clear that Lemma \ref{***3} and $\Phi(\lambda,t)=\Phi_+(\lambda,t)+\Phi_-(\lambda,t)$
implies that $\Phi(\lambda,t)$ satisfies \eqref{seconddegr}. Then \eqref{puiss} follows by
standard arguments. The second case $X_0=Y_0=1$ can be proved in a similar way.\end{proof}
\subsection{Proof of Theorem \ref{conv1}}
\label{***}
We keep the notations given in Section \ref{section_not}. Let $\alpha_0$ and $\beta_0$ 
be two real numbers in $[0,1]$. Let $\Delta_x$ be a small space parameter so that:
\[
0\le \alpha_0+c_0\Delta_x\le 1,\quad 0\le \beta_0+c_1\Delta_x\le 1,
\]
where $c_0$ and $c_1$ belong to $\Rset$.\\
Note that $\alpha_0>0$ (resp. $\beta_0>0$) implies that $\alpha_0+c_0\Delta_x> 0$ 
(resp. $\beta_0+c_1\Delta_x> 0$) when $\Delta_x$ is small enough. If $\alpha_0<1$ 
(resp. $\beta_0<1$), similarly $\alpha_0+c_0\Delta_x< 1$ (resp. $\beta_0+c_1\Delta_x< 1$) 
as soon as $\Delta_x$ is small. In the case $\alpha_0=1$ (resp. $\beta_0=1$) $c_0$ 
(resp. $c_1$) has to be chosen in $]-\infty,0]$.\\
We assume that the coefficients of the transition probability matrix $\pi^\Delta$ 
of the Markov chain $(Y_t)$ satisfy:
\begin{equation}
\label{*3**}
\alpha=\alpha_0+c_0\Delta_x,\quad \beta=\beta_0+c_1\Delta_x
\end{equation}
i.e. $\pi^\Delta$ is given by \eqref{1-4B}.
$(X_t)$ is defined by \eqref{25*1} and $(Z_s^\Delta)$ is the normalized persistent random walk:
\[
 Z^\Delta_s=\Delta_x X_{s/\Delta_t}, \quad (\Delta_t>0, \ \Delta_x>0, \ s\in\Delta_t\Nset).
 \]
$(\tilde{Z}_t^\Delta;\ t\ge 0)$ denotes the linear interpolation of $(Z_t^\Delta)$.\\
Recall that  $\rho_0=1-\alpha_0-\beta_0$
and  $\eta_0=\beta_0-\alpha_0$. Note that $\rho_0\neq 1\Longleftrightarrow \alpha_0+\beta_0\neq 0$
\begin{prop}
\label{conv11} Let $\rho_0\neq1$,\\
1) if $r\Delta_t=\Delta_x$ with $r>0$ then $\tilde{Z}^\Delta_t$ converges 
towards the deterministic limit $-\frac{rt\eta_0}{1-\rho_0}$ as $\Delta_x$ tends to $0$.\\
2) if $r\Delta_t=\Delta_x^2$ with $r>0$,
$\tilde{Z}^\Delta_t+\frac{t\sqrt{r}\eta_0}{(1-\rho_0)\sqrt{\Delta_t}}$ converges in
distribution to the Gaussian law with mean
\begin{equation}
\label{moyenne} m=rt\Big(\frac{-\overline{c}}{1-\rho_0}+\frac{\eta_0
c}{(1-\rho_0)^2} \Big)
\end{equation}
and variance
\begin{equation}\label{variance}
\sigma^2=\frac{r(1+\rho_0)}{1-\rho_0}\Big(1-\frac{\eta_0^2}{(1-\rho_0)^2}\Big)t,
\end{equation}
where
\begin{equation}
\label{encorcoeff} c=c_0+c_1\quad \mbox{and}\quad
\overline{c}=c_1-c_0.
\end{equation}
\end{prop}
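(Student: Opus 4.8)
The plan is to pass through the moment generating function $\Phi(\lambda,t)=\E[\lambda^{X_t}]$ of Proposition \ref{a+a-1} and let $\Delta_x\to 0$. A routine interpolation estimate shows that, for every fixed $\mu\in\Rset$, $\E[e^{-\mu\tilde{Z}^\Delta_t}]$ has the same limit as $\Phi(e^{-\mu\Delta_x},n)$ in case 1), while $\E[e^{-\mu\xi^\Delta_t}]$ has the same limit as $e^{-\mu\,t\sqrt r\,\eta_0/((1-\rho_0)\sqrt{\Delta_t})}\,\Phi(e^{-\mu\Delta_x},n)$ in case 2), where $n=\lfloor t/\Delta_t\rfloor$. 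It therefore suffices to study $\Phi(e^{-\mu\Delta_x},n)$, which by Proposition \ref{a+a-1}, with $\lambda=e^{-\mu\Delta_x}$, $\alpha=\alpha_0+c_0\Delta_x$, $\beta=\beta_0+c_1\Delta_x$, equals $a_+\theta_+^n+a_-\theta_-^n$, the $\theta_\pm$ being the roots of \eqref{polybis}.

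At the base point $\Delta_x=0$, $\lambda=1$, the polynomial \eqref{polybis} factors as $(\theta-1)(\theta-\rho_0)$, so $\theta_+\to 1$ and $\theta_-\to\rho_0$; moreover $\mathcal{D}\to(\alpha_0+\beta_0)^2=(1-\rho_0)^2>0$ since $\rho_0\neq 1$, so $\sqrt{\mathcal{D}}$ stays bounded away from $0$, $a_+\to 1$, and $a_-$ stays bounded (for both admissible initial conditions $Y_0=\pm1$). In the regime $\rho_0\neq 1$ one has $|\rho_0|<1$ (the degenerate value $\rho_0=-1$ being treated in Proposition \ref{cas_limit}), hence $|\theta_-|\le\varrho<1$ for $\Delta_x$ small and $\theta_-^n\to 0$ faster than any power of $\Delta_x$; consequently $a_-\theta_-^n\to 0$, and even $e^{C/\Delta_x}a_-\theta_-^n\to 0$, which is what is needed after the centering in 2). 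Everything thus reduces to the asymptotics of $a_+\theta_+^n$ with $a_+\to 1$.

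The heart of the argument is the Taylor expansion of $\theta_+$, and hence of $\log\theta_+$, in powers of $\Delta_x$. Implicit differentiation of \eqref{polybis} at $\theta=1$, $\Delta_x=0$ (where the $\theta$-derivative of the left-hand side of \eqref{polybis} equals $2-(1+\rho_0)=1-\rho_0$) gives $\theta_+=1+\frac{\mu\eta_0}{1-\rho_0}\,\Delta_x+\kappa\,\Delta_x^2+O(\Delta_x^3)$, whence $\log\theta_+=\frac{\mu\eta_0}{1-\rho_0}\,\Delta_x+\widetilde\kappa\,\Delta_x^2+O(\Delta_x^3)$ for suitable coefficients $\kappa,\widetilde\kappa$ depending on $\mu,\rho_0,\eta_0,c_0,c_1$. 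In case 1), $n\sim t/\Delta_t=rt/\Delta_x$, so $n\log\theta_+\to\frac{rt\mu\eta_0}{1-\rho_0}$ and $\E[e^{-\mu\tilde{Z}^\Delta_t}]\to\exp\big(\tfrac{rt\mu\eta_0}{1-\rho_0}\big)$, the Laplace transform of the Dirac mass at $-\frac{rt\eta_0}{1-\rho_0}$; this is the deterministic limit. In case 2), $n\sim t/\Delta_t=rt/\Delta_x^2$, so $n\log\theta_+=\frac{rt\mu\eta_0}{(1-\rho_0)\Delta_x}+rt\widetilde\kappa+O(\Delta_x)$; since $\frac{t\sqrt r\,\eta_0}{(1-\rho_0)\sqrt{\Delta_t}}=\frac{rt\eta_0}{(1-\rho_0)\Delta_x}$, in $\E[e^{-\mu\xi^\Delta_t}]= e^{-\mu rt\eta_0/((1-\rho_0)\Delta_x)}\big(a_+\theta_+^n+a_-\theta_-^n\big)$ the divergent terms cancel and one is left with $\E[e^{-\mu\xi^\Delta_t}]\to e^{rt\widetilde\kappa}$.

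It then remains to carry out the second-order expansion explicitly and to check that $\widetilde\kappa$ is the quadratic polynomial in $\mu$ with $rt\widetilde\kappa=-m\mu+\tfrac12\sigma^2\mu^2$, where $m$ and $\sigma^2$ are those of \eqref{moyenne} and \eqref{variance}; this requires expanding $e^{-\mu\Delta_x}$, $\mathcal{D}$ and $\sqrt{\mathcal{D}}$ to order $\Delta_x^2$. As $e^{rt\widetilde\kappa}$ is then the moment generating function of the $\normal(m,\sigma^2)$ law, and pointwise convergence of moment generating functions on a neighbourhood of the origin implies convergence in distribution, this proves 2). The \emph{main obstacle} is precisely this bookkeeping of the $\Delta_x^2$-coefficient, together with the verification that the remainders, multiplied by $n$ of order $\Delta_x^{-2}$, still tend to $0$; the latter follows from the analyticity of $\theta_+$ as a function of $\Delta_x$ near the base point, guaranteed by $\mathcal{D}>0$ there.
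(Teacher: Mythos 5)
Your strategy is exactly the one the paper follows: pass to the moment generating function of Proposition \ref{a+a-1} with $\lambda=e^{-\mu\Delta_x}$, observe that $a_+\to1$ and that the $\theta_-$-branch is annihilated even after multiplication by the divergent centering factor (your estimate $e^{C/\Delta_x}a_-\theta_-^{n}\to0$ for $|\theta_-|\le\varrho<1$ and $n\asymp\Delta_x^{-2}$ is the same point as the paper's \eqref{reflim}), reduce everything to $n\log\theta_+$, and invoke Curtiss' theorem to convert convergence of Laplace transforms on all of $\Rset$ into convergence in distribution. The interpolation step, the first-order coefficient $\mu\eta_0/(1-\rho_0)$ of $\theta_+$ (which your implicit-differentiation computation gets right, since the $\theta$-derivative of \eqref{polybis} at the base point is $1-\rho_0$ and the $\Delta_x$-derivative is $-\mu\eta_0$), and the cancellation of the divergent term in case 2) are all handled correctly.

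The one thing your write-up does not do is the thing the Proposition actually asserts: that the second-order coefficient $\widetilde\kappa$ of $\log\theta_+$ equals $\bigl(-m\mu+\tfrac12\sigma^2\mu^2\bigr)/(rt)$ with $m$ and $\sigma^2$ given by \eqref{moyenne} and \eqref{variance}. You correctly identify this as the remaining obstacle, but you assert the identification rather than prove it; as written, your argument only shows that $\xi^\Delta_t$ converges to a Gaussian law whose mean and variance are \emph{some} functions of $(\mu$-coefficients of$)$ the second-order expansion. That computation is not a formality --- it is essentially the whole body of the paper's proof: one must expand $\mathcal{D}$ of \eqref{5A} as $A_0+A_1\Delta_x+A_2\Delta_x^2+o(\Delta_x^2)$, take the square root (legitimate because $A_0=(1-\rho_0)^2>0$, as you note), and track the three coefficients $B_{20},B_{21},B_{22}$ of the resulting quadratic in $\mu$; the cancellation $B_{20}=0$ and the combination of $\sqrt{\mathcal{D}}$ with the other half of the root formula \eqref{B1} produce exactly \eqref{***5D} and hence \eqref{juil1}--\eqref{juil2}. (Your structural remark that $\theta_+\equiv1$ at $\mu=0$ guarantees $\widetilde\kappa(0)=0$, so the limit is at least a probability law, but it does not pin down $m$ or $\sigma^2$.) So: right route, all the soft analysis in place, but the quantitative heart of the statement is left unverified.
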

\begin{proof} We shall prove the statement under the condition
$X_0=Y_0=-1$. If $X_0=Y_0=+1$, the limit is obtained by changing the sign and 
replacing $c_0$ (resp. $c_1$) by $c_1$ (resp. $c_0$).\\
{\bf 1)} Let $\Phi(\lambda,t)$ be the generating function associated with $X_t$. 
In order to determine the limit distribution of $Z_t^\Delta$, let us introduce:
\begin{equation}
\label{lapla} \phi(\mu,t)=\mathbb{E}_{-1}[e^{-\mu \tilde{Z}_t^\Delta}],
\end{equation}
where $\E_{-1}$ denotes the expectation when $Y_0=-1$. Observe that
\begin{equation}
\label{5.22b} \phi(\mu,t)=\Phi(e^{-\mu\Delta_x},\frac{t}{\Delta_t})=
\mathbb{E}_{-1}[e^{-\mu \Delta_x X(t/\Delta_t)}],
\end{equation}
when $t/\Delta_t\in\Nset$.\\
According to Proposition \ref{a+a-1}, when $t/\Delta_t\in\Nset$, $\phi(\mu,t)$ 
can be expressed in terms of $a_+$, $a_-$ and $\sqrt{\mathcal{D}}$.\\
First let us study the asymptotic expansion of the discriminant $\mathcal{D}$ 
as $\Delta_x\to 0$. It is convenient to set:
 \begin{equation}
 \label{star}
 \bar{\delta}=c_0\Delta_x\quad\mbox{and}\quad \hat{\delta}=c_1\Delta_x.
 \end{equation}
Applying \eqref{5A} with $\alpha=\alpha_0+\bar{\delta}$ and $\beta=\beta_0
+\hat{\delta}$ we have:
\[
\mathcal{D}=\left((1-\alpha_0-\bar{\delta})e^{\mu
\Delta_x}+(1-\beta_0-\hat{\delta})e^{-\mu
\Delta_x}\right)^2-4(1-\alpha_0-\beta_0-\bar{\delta}-\hat{\delta}).
\]
By \eqref{star} we get
\begin{eqnarray}\label{delta}
\mathcal{D}&=\Big(&(2-\alpha_0-\beta_0)+\Delta_x\Big(\mu(\beta_0-\alpha_0)-c_0-c_1\Big)\\
&&+\Delta_x^2\left(\frac{\mu^2}{2}\,(2-\alpha_0-\beta_0)+\mu(c_1-c_0)\right)
+o(\Delta_x^2)\Big)^2\nonumber\\
&-&4\Big(1-\alpha_0-\beta_0-\Delta_x(c_0+c_1)\Big)
\end{eqnarray}
It is clear that $\mathcal{D}$ admits the following asymptotic expansion, as $\Delta_x\to 0$:
\[
\mathcal{D}=A_0+A_1\Delta_x+A_2\Delta_x^2+o(\Delta_x^2)
\]
It is usefull to note that $\alpha_0$ and $\beta_0$ can be expressed in terms of 
$\eta_0$ and $\rho_0$:
\[
\alpha_0=\frac{1-\eta_0-\rho_0}{2}\quad\mbox{and}\quad
\beta_0=\frac{1+\eta_0-\rho_0}{2}.
\]
Let us compute $A_0$, $A_1$ and $A_2$ using standard analysis:
\[
A_0=(2-\alpha_0-\beta_0)^2-4(1-\alpha_0-\beta_0)=\alpha_0^2+\beta_0^2
+2\alpha_0\beta_0=(\alpha_0+\beta_0)^2=(1-\rho_0)^2.
\]
\begin{eqnarray}
\label{5B}
A_1&=&2(2-\alpha_0-\beta_0)\Big(\mu(\beta_0-\alpha_0)-(c_0+c_1)\Big)+4(c_0+c_1)\nonumber\\
&=&2\mu(2-\alpha_0-\beta_0)(\beta_0-\alpha_0)-4(c_0+c_1)+2(\alpha_0+\beta_0)(c_0+c_1)
+4(c_0+c_1)\nonumber\\
&=&2\Big\{\mu(2-\alpha_0-\beta_0)(\beta_0-\alpha_0)+(\alpha_0+\beta_0)(c_0+c_1)\Big\}\nonumber\\
&=&2\Big(\mu\eta_0(1+\rho_0)+c(1-\rho_0)\Big).
\end{eqnarray}
\begin{eqnarray}
\label{5C}
A_2&=&2(2-\alpha_0-\beta_0)\Big(\frac{\mu^2}{2}(2-\alpha_0-\beta_0)
+\mu(c_1-c_0)\Big)+\Big(\mu(\beta_0-\alpha_0)-(c_0+c_1)\Big)^2\nonumber\\
&=&\mu^2\Big((2-\alpha_0-\beta_0)^2+(\beta_0-\alpha_0)^2 \Big)\nonumber\\
&&+2\mu\Big((2-\alpha_0-\beta_0)(c_1-c_0)-(\beta_0-\alpha_0)(c_0+c_1) \Big) +(c_0+c_1)^2\nonumber\\
&=&2\mu^2\Big((\alpha_0-1)^2+(\beta_0-1)^2\Big)+4\mu\Big((1-\beta_0)c_1-(1-\alpha_0)c_0 \Big)
+(c_0+c_1)^2\nonumber\\
&=&\mu^2(\eta_0^2+(1+\rho_0)^2)+2\mu\Big((1+\rho_0)\overline{c}-\eta_0
c\Big)+c^2.
\end{eqnarray}
Under the condition $\rho_0\neq 1$, we have
\[
\sqrt{\mathcal{D}}=(1-\rho_0)\sqrt{1+\frac{A_1}{(1-\rho_0)^2}\Delta_x
+\frac{A_2}{(1-\rho_0)^2}\Delta_x^2+o(\Delta_x^2)}
\]
Hence
\[
\sqrt{\mathcal{D}}=B_0+B_1\Delta_x+B_2\Delta_x^2+o(\Delta_x^2)
\]
with
\[
B_0=1-\rho_0,
\]
\begin{eqnarray*}
B_1&=&\frac{1}{2}\frac{A_1}{1-\rho_0}=\frac{1}{1-\rho_0}
\Big\{\mu(2-\alpha_0-\beta_0)(\beta_0-\alpha_0)+(\alpha_0+\beta_0)(c_0+c_1)\Big\}\\
&=&\mu \,\frac{\eta_0(1+\rho_0)}{1-\rho_0}+c
\end{eqnarray*}
\begin{eqnarray}
\label{5D}
B_2&=&\frac{1}{2}\frac{A_2}{1-\rho_0}-\frac{1}{8}\frac{A_1^2}{(1-\rho_0)^3}.
\end{eqnarray}
As a result, $B_2$ is a second order polynomial function with respect to the 
$\mu$-variable: $$B_2=\mu^2 B_{22}+\mu B_{21}+B_{20}.$$
Identities \eqref{5B}, \eqref{5C} and \eqref{5D} imply:
\[
B_{20}=\frac{c^2}{2(1-\rho_0)}-\frac{\Big(2c(1-\rho_0)\Big)^2}{8(1-\rho_0)^3}=0
\]
\[
B_{21}=\frac{1}{2}\frac{2\Big((1+\rho_0)\overline{c}-\eta_0
c\Big)}{1-\rho_0}-\frac{1}{8}\frac{8\eta_0
c(1-\rho_0)(1+\rho_0)}{(1-\rho_0)^3}=\overline{c}\frac{1+\rho_0}{1-\rho_0}-\frac{2\eta_0
c}{(1-\rho_0)^2}
\]
\begin{eqnarray*}
B_{22}&=&\frac{1}{2}\frac{\eta_0^2+(1+\rho_0)^2}{1-\rho_0}
-\frac{1}{8}\frac{4\eta_0^2(1+\rho_0)^2}{(1-\rho_0)^3}=
\frac{1}{2}\frac{\Big(\eta_0^2+(1+\rho_0)^2\Big)(1-\rho_0)^2
-\eta_0^2(1+\rho_0)^2}{(1-\rho_0)^3}\\
&=&\frac{(1+\rho_0)^2}{2(1-\rho_0)}-\frac{2\eta_0^2\rho_0}{(1-\rho_0)^3}.
\end{eqnarray*}
Consequently \begin{eqnarray}\label{B3}
\sqrt{\mathcal{D}}&=&1-\rho_0+\Big(\mu\,\frac{\eta_0(1+\rho_0)}{1-\rho_0}
+c\Big)\Delta_x+\Big\{\mu^2\Big(\frac{(1+\rho_0)^2}{2(1-\rho_0)}
-\frac{2\eta_0^2\rho_0}{(1-\rho_0)^3}\Big)\nonumber\\
&&+\mu\Big(\overline{c}\,\frac{1+\rho_0}{1-\rho_0}-\frac{2\eta_0
c}{(1-\rho_0)^2}\Big) \Big\}\Delta_x^2+o(\Delta_x^2).
\end{eqnarray}
{\bf 2)} The first order development suffices to determine the limit of 
$\phi(\mu,t)$ as $\Delta_x\to 0$. Indeed
\begin{eqnarray}\label{B2}
\sqrt{\mathcal{D}}&=&1-\rho_0+\frac{\Delta_x}{1-\rho_0}\Big\{\mu\eta_0(1+\rho_0)
+c(1-\rho_0)\Big\}+o(\Delta_x).
\end{eqnarray}
From \eqref{B1} and \eqref{*3**} we can easely deduce
\begin{eqnarray*}
\theta_\pm&=&\frac{1}{2}(1+\rho_0)+\frac{\Delta_x}{2}(\mu\eta_0-c)
\pm\frac{1}{2}\Big\{1-\rho_0+\Delta_x\Big(\frac{\mu\eta_0(1+\rho_0)}{1-\rho_0}+c\Big)\Big\}
+o(\Delta_x).
\end{eqnarray*}
Then
\begin{equation}
\label{R4} \theta_+=1+\Delta_x\frac{\mu\eta_0}{1-\rho_0}+o(\Delta_x)
\quad\mbox{and}\quad\theta_-=\rho_0-\Delta_x\Big(\frac{\mu\eta_0\rho_0}{1-\rho_0}
+c\Big)+o(\Delta_x).
\end{equation}
Let $t'=\lfloor\frac{t}{\Delta_t} \rfloor \Delta_t$. Since
$\tilde{Z}^\Delta_t=\tilde{Z}^\Delta_{t'}+(t-t')\Delta_x Y_{\lfloor t/\Delta_t\rfloor+1}$ 
and $\vert Y_n\vert\le 1$, then
\begin{equation}
\label{5.31b}
\vert \phi(\mu,t)-\phi(\mu,t')\vert\le C\Delta_x\Delta_t,
\end{equation}
where $C$ is a constant which only depends on $\mu$.\\
Recall that identity \eqref{lapla} and Proposition \ref{a+a-1} lead to
\begin{equation}\label{5.27b}
\phi(\mu,t')=a_+\theta_+^{t'/\Delta_t}+a_-\theta_-^{t'/\Delta_t}
\end{equation}
where
\begin{equation}
\label{R3}
a_+=\frac{(1-\alpha)e^{2\mu\Delta_x}+\alpha-\theta_-e^{\mu\Delta_x}}{\sqrt{\mathcal{D}}}
\quad\quad\mbox{and}\quad a_-=e^{\mu\Delta_x}-a_+.
\end{equation}
It is obvious that \eqref{R3} and \eqref{R4} imply: $\lim_{\Delta_x\to 0}a_+=1$
and $\lim_{\Delta_x\to 0}a_-=0$.\\ Since $\lim_{\Delta_t\to
0}\theta_-=\rho_0$ and $-1<\rho_0<1$ then
\begin{equation}
\label{reflim} \lim_{\Delta_x\, \Delta_t\to 0}a_-\theta_-^{t'/\Delta_t}=0.
\end{equation}
Consequently, the second term in \eqref{5.27b} tends to $0$. It is important to 
note that the initial condition $X_0=Y_0=-1$ disappears. Let us study the first 
term in the right hand side of \eqref{5.27b}. Note that $\lim_{\Delta_x\to 0}\theta_+=1$, 
then if $\Delta_x$ is small enough, we can take the logarithm of $\theta_+$. 
From \eqref{R4} a straightforward calculation gives
\[
\log\theta_+=\Delta_x\frac{\mu\eta_0}{1-\rho_0}+o(\Delta_x)
\]
Choosing $r\Delta_t=\Delta_x$ and using \eqref{5.27b}, \eqref{reflim} and \eqref{5.31b}, 
we obtain the following limit:
\[
\lim_{\Delta_x\to 0}\phi(\mu,t)=\exp
\{\frac{r\mu\eta_0 t}{1-\rho_0}\}.
\]
Since the convergence holds for any $\mu\in\Rset$, we can conclude 
(cf Theorem 3 in \cite{curtiss42}) that
\[
\lim_{\Delta_x\to 0}\E_{-1}[\exp (iu \tilde{Z}^\Delta_t)]=\exp
\Big\{-\frac{iu r\eta_0 t}{1-\rho_0} \Big\},\quad\mbox{for any}\ u\in\Rset.
\]
Thus $\tilde{Z}^\Delta_t$ converges in distribution, as $\Delta_x\to 0$, to  
the Dirac measure at  $-\frac{r\eta_0 t}{1-\rho_0}$.\\
{\bf 3)} Next, we consider the convergence of the process
\[
\xi^\Delta_t=\tilde{Z}^\Delta_t+\frac{t\eta_0\sqrt{r}}{(1-\rho_0)\sqrt{\Delta_t}}.
\]
Hence we define
\[
\psi(\mu,t)=\E_{-1}[e^{-\mu \xi^\Delta_t}]=e^{-\frac{\mu
t\eta_0\sqrt{r}}{(1-\rho_0)\sqrt{\Delta_t}}}\phi(\mu,t).
\]
To determine the limit of $\psi(\mu,t)$ as $\Delta_t,\Delta_x\to 0$, 
from \eqref{5.27b} and \eqref{reflim} we may deduce that it suffices to compute 
the second order development of the root $\theta_+$. Using
\eqref{B1} and \eqref{B3} we get:
\begin{eqnarray*}
\theta_+&=&\frac{1}{2}(1+\rho_0)+\frac{\Delta_x}{2}(\mu\eta_0-c)
+\frac{\Delta_x^2}{2}\Big(\frac{\mu^2(1+\rho_0)}{2}+\mu\overline{c} \Big)\\
&+&\frac{1-\rho_0}{2}+\frac{\Delta_x}{2}\Big(\frac{\mu\eta_0(1+\rho_0)}{1-\rho_0}+c \Big)\\
&&+\frac{\Delta_x^2}{2}\Big(\mu^2\Big(\frac{(1+\rho_0)^2}{2(1-\rho_0)}
-\frac{2\eta_0^2\rho_0}{(1-\rho_0)^3}\Big)+\mu\Big(\overline{c}\frac{1+\rho_0}{1-\rho_0}
-\frac{2\eta_0 c}{(1-\rho_0)^2}\Big) \Big)+o(\Delta_x^2).
\end{eqnarray*}
As a result
\begin{equation}\label{***5D}
\theta_+=1+\Delta_x\frac{\mu\eta_0}{1-\rho_0}+\Delta_x^2\Big(\frac{\mu^2}{2}
\Big(\frac{1+\rho_0}{1-\rho_0}-\frac{2\eta_0^2\rho_0}{(1-\rho_0)^3}\Big)
+\mu\Big(\frac{\overline{c}}{1-\rho_0}-\frac{\eta_0
c}{(1-\rho_0)^2} \Big) \Big)+o(\Delta_x^2).
\end{equation}
We take $r\Delta_t=\Delta_x^2$. Then

\begin{eqnarray*}
\lim_{\Delta_x\to 0}\psi(\mu,t)&=&\lim_{\Delta_x\to 0}
\Big(a_+\theta_+^{rt/\Delta_x^2}\exp\Big\{-\frac{\mu r\eta_0 t}{1-\rho_0}
\frac{1}{\Delta_x}\Big\}\Big)\\
&=&\lim_{\Delta_x\to 0}\exp\Big\{-\frac{\mu r\eta_0 t}{1-\rho_0}
\frac{1}{\Delta_x}+\frac{rt}{\Delta_x^2}\log\theta_+\Big\}.
\end{eqnarray*}
It is straightforward to deduce
\begin{equation}
\label{5K}
\lim_{\Delta_x\to 0}\psi(\mu,t)=\exp\Big\{-m\mu+\frac{\sigma^2\mu^2}{2}\Big\}
\end{equation}
with
\begin{equation}
\label{juil1}
m=r\Big(\frac{-\bar{c}}{1-\rho_0}+\frac{\eta_0 c}{(1-\rho_0)^2}\Big)t
\end{equation}
\begin{equation}
\label{juil2}
\sigma^2=\frac{r(1+\rho_0)}{1-\rho_0}\Big(1-\frac{\eta_0^2}{(1-\rho_0)^2}\Big)t.
\end{equation}
{\bf 4)} Since \eqref{5K} holds for any $\mu\in\Rset$, this implies that 
$\xi^\Delta_t$ converges in distribution, as $\Delta_x\to 0$, to the Gaussian 
distribution with mean $m$ and variance $\sigma^2$.
 (see Theorem 3 in \cite{curtiss42})\end{proof}
\begin{prop}
\label{multimarginales} Assume that $\rho_0\neq 1$ and $r\Delta_t=(\Delta_x)^2$. 
Let us denote $\xi^\Delta$ the process defined by
\[
\xi^{\Delta}_t=\tilde{Z}^\Delta_t+\frac{t\sqrt{r}\eta_0}{(1-\rho_0)\sqrt{\Delta_t}}.
\]
Then
$(\xi^{\Delta}_{t_1},\xi_{t_2}^{\Delta},\ldots,\xi^{\Delta}_{t_n})$
converges in distribution, as $\Delta_x\to 0$, towards
$(\xi^0_{t_1},\xi^0_{t_2},\ldots,\xi^0_{t_n})$ where $\xi^0$ is given by
\[
\xi^0_t=r\Big(\frac{-\overline{c}}{1-\rho_0}+\frac{\eta_0 c}{(1-\rho_0)^2}\Big)t
+\sqrt{\frac{r(1+\rho_0)}{1-\rho_0}\Big(1-\frac{\eta_0^2}{(1-\rho_0)^2}\Big)}W_t.
\]
($W_t$, $t\ge 0$) is the one-dimensional Brownian motion starting at $0$.
\end{prop}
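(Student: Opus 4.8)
The plan is to prove convergence of the finite-dimensional Laplace transforms
$\psi_\Delta(\mu_1,\dots,\mu_n):=\E_{-1}\big[\exp(-\sum_{j=1}^n\mu_j\xi^\Delta_{t_j})\big]$ toward the corresponding transform of $(\xi^0_{t_1},\dots,\xi^0_{t_n})$ for every $(\mu_1,\dots,\mu_n)\in\Rset^n$, and then to conclude via Theorem~3 in \cite{curtiss42} exactly as in the proof of Proposition~\ref{conv11}. Since $|\tilde Z^\Delta_{t_j}|\le t_j$, all these transforms are finite, so no integrability issue arises.

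First I would rewrite everything in terms of increments. Setting $s_j=t_j/\Delta_t$ (the non-integer case being corrected by $\lfloor\cdot\rfloor$ through the estimate \eqref{5.31b}, extended verbatim to several times), $D_1=X_{s_1}$, $D_k=X_{s_k}-X_{s_{k-1}}$ for $k\ge2$, and $\tilde\mu_k=\sum_{j\ge k}\mu_j$, one gets $\sum_j\mu_j\xi^\Delta_{t_j}=\sum_k\tilde\mu_k\,\Xi^\Delta_k$, where $\Xi^\Delta_k=\Delta_x D_k+\tfrac{(t_k-t_{k-1})\sqrt r\,\eta_0}{(1-\rho_0)\sqrt{\Delta_t}}$ is precisely the increment analogue of $\xi^\Delta$ over a time span $t_k-t_{k-1}$. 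Because $(X_t,Y_t)$ is a Markov chain, conditioning successively on $Y_{s_1},Y_{s_2},\dots$ turns $\psi_\Delta$ into an iterated conditional expectation whose $k$-th inner factor is $e^{-\tilde\mu_k(t_k-t_{k-1})\sqrt r\,\eta_0/((1-\rho_0)\sqrt{\Delta_t})}\,\E[\lambda_k^{D_k}\mid Y_{s_{k-1}}=y]$ with $\lambda_k=e^{-\tilde\mu_k\Delta_x}$ and $y=\pm1$. By the recursion of Lemma~\ref{***3}, applied to the shifted chain, this conditional generating function has the form $c_+^{(y)}(\lambda_k)\,\theta_+(\lambda_k)^{s_k-s_{k-1}}+c_-^{(y)}(\lambda_k)\,\theta_-(\lambda_k)^{s_k-s_{k-1}}$, where $\theta_\pm$ are the same roots \eqref{B1} (they do not depend on the initial increment law) and $c_\pm^{(y)}$ are initial-state-dependent constants, each a convex combination of the two coefficient sets produced in Proposition~\ref{a+a-1}.

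The heart of the argument is then the asymptotic analysis already performed in Proposition~\ref{conv11}: with $r\Delta_t=\Delta_x^2$ one has $\theta_-(\lambda_k)\to\rho_0\in(-1,1)$, hence $\theta_-(\lambda_k)^{(t_k-t_{k-1})/\Delta_t}\to0$, while $\theta_+(\lambda_k)^{(t_k-t_{k-1})/\Delta_t}$ converges, through the second-order expansion \eqref{***5D}, to $\exp\{-\tilde\mu_k m_k+\tfrac12\tilde\mu_k^2\sigma_k^2\}$ with $m_k,\sigma_k^2$ given by \eqref{moyenne}--\eqref{variance} with $t$ replaced by $t_k-t_{k-1}$; moreover $c_+^{(y)}(\lambda_k)\to1$ and $c_-^{(y)}(\lambda_k)\to0$, for \emph{both} values $y=\pm1$ (this is the ``the initial condition disappears'' observation of Proposition~\ref{conv11}, now needed also in the $X_0=Y_0=1$ branch of Proposition~\ref{a+a-1}: one checks $\tfrac{(1-\beta_0)+\beta_0-\rho_0}{1-\rho_0}=1$). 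Since $Y_{s_{k-1}}$ ranges over the two-point set $\{-1,1\}$, the iterated conditional expectation is a finite sum, so the limits can be taken one factor at a time, yielding
\[
\lim_{\Delta_x\to0}\psi_\Delta(\mu_1,\dots,\mu_n)=\prod_{k=1}^n\exp\Big\{-\tilde\mu_k m_k+\frac{1}{2}\tilde\mu_k^2\sigma_k^2\Big\}.
\]
This is exactly the joint Laplace transform of $(\xi^0_{t_1},\dots,\xi^0_{t_n})$, since $\xi^0$ has independent Gaussian increments whose mean and variance over $[t_{k-1},t_k]$ are $m_k$ and $\sigma_k^2$; an application of \cite{curtiss42} then upgrades the convergence of transforms to convergence in distribution.

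I expect the main obstacle to be the bookkeeping of the conditional generating functions: one must verify that the leading coefficient $c_+^{(y)}(\lambda_k)$ tends to $1$ uniformly over the finitely many boundary states $y$ and over $k$, so that the successive limits genuinely decouple, and that the $\theta_-$-terms, whose prefactors are only bounded (not small), are nevertheless killed by $\theta_-^{(t_k-t_{k-1})/\Delta_t}\to0$ before being multiplied across the factors and through the nested conditional expectations. This amounts to repeating the computation of Proposition~\ref{a+a-1} for the $Y_0=+1$ initial condition and for the mixed initial increment generated by conditioning. The remaining ingredients—the interpolation correction and the Curtiss continuity step—are routine repetitions of Proposition~\ref{conv11}.
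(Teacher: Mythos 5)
Your proposal is correct and follows essentially the same route as the paper: condition at the intermediate times via the Markov property of $(X_t,Y_t)$, observe that the limiting transform of each increment is the same Gaussian one for both conditioning states $Y=\pm 1$ (the ``initial condition disappears'' fact, which you rightly verify also in the $X_0=Y_0=1$ branch of Proposition~\ref{a+a-1}), and conclude that the factors decouple into independent Gaussian increments matching $\xi^0$. The only differences are cosmetic — you work with joint Laplace transforms for general $n$ and re-derive the increment asymptotics from $\theta_\pm$, whereas the paper uses characteristic functions, treats $n=2$, and cites Proposition~\ref{conv11} directly for the $\E_{\pm 1}$ limits.
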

\begin{proof} The proof is only presented in the case $n=2$. For simplicity let 
$s=t_1<t_2=t$. 
We are interested in the limit of the random vector $(\xi^\Delta_{s},\xi^\Delta_{t})$. 
Let us then compute the two dimensional Fourier transform
\[
\Psi^\Delta(\mu,\lambda)=\E_{-1}\Big[e^{i\mu(\xi^\Delta_t-\xi^\Delta_s)}
e^{i\lambda\xi^\Delta_s}\Big],\quad (\lambda,\mu\in\Rset).
\]
Since the process $(X_t, Y_t)$ is Markovian, we obtain
\begin{eqnarray*}
\Psi^\Delta(\mu,\lambda)&=&\E_{-1}\Big[e^{i\mu\xi^\Delta_{t-s}}\Big]
\E_{-1}\Big[1_{\{Y(s/\Delta_t)=-1\}}e^{i\lambda\xi^\Delta_s}\Big]\\
&+&\E_{+1}\Big[e^{i\mu\xi^\Delta_{t-s}}\Big]\E_{-1}\Big[1_{\{Y(s/\Delta_t)=+1\}}
e^{i\lambda\xi^\Delta_s}\Big],
\end{eqnarray*}
when $s/\Delta_t$ and $t/\Delta_t$ belongs to $\Nset$.\\
Note that $\vert \xi^\Delta_u-\xi^\Delta_{u'}\vert\le\Delta_x\Delta_t$ 
when $u'=\Big\lfloor\frac{u}{\Delta_t}\Big\rfloor \Delta_t$. Consequently
\begin{eqnarray*}
&&\Psi^\Delta(\mu,\lambda)\underset{\Delta_x\to 0}{\sim}\E_{-1}\Big[e^{i\mu\xi^\Delta_{t'-s'}}\Big]
\E_{-1}\Big[1_{\{Y(s'/\Delta_t)=-1\}}
e^{i\lambda\xi^\Delta_{s'}}\Big]\\
&&+\E_{+1}\Big[e^{i\mu\xi^\Delta_{t'-s'}}\Big]\E_{-1}\Big[1_{\{Y(s'/\Delta_t)=+1\}}
e^{i\lambda\xi^\Delta_{s'}}\Big], \quad(s'=\lfloor s/\Delta_t\rfloor\Delta_t,\ 
t'=\lfloor t/\Delta_t\rfloor \Delta_t).
\end{eqnarray*}
According to Proposition \ref{conv1},
\[
\lim_{\Delta_x\to
0}E_{-1}\Big[e^{i\mu\xi^\Delta_{t'-s'}}\Big]=\lim_{\Delta_x\to
0}E_{+1}\Big[e^{i\mu\xi^\Delta_{t'-s'}}\Big]=e^{(i\mu m-\frac{\sigma^2}{2}\,\mu^2)(t-s)}
\]
where $m$ and $\sigma^2$ are defined by \eqref{juil1}, resp. \eqref{juil2}. 
Then we can deduce:
\begin{eqnarray*}
\lim_{\Delta_x\to
0}\Psi^\Delta(\mu,\lambda)&=&e^{(i\mu m-\frac{\sigma^2}{2}\,\mu^2)(t-s)}
\lim_{\Delta_x\to 0}\E_{-1}\Big[e^{i\lambda\xi^\Delta_{s'}}\Big]\\
&=&e^{(i\mu m-\frac{\sigma^2}{2}\,\mu^2)(t-s)}
\lim_{\Delta_x\to 0}\E_{-1}\Big[e^{i\lambda\xi^\Delta_{s}}\Big]\\
&=&e^{(i\mu m-\frac{\sigma^2}{2}\,\mu^2)(t-s)}e^{(i\lambda m-\frac{\sigma^2}{2}\,\lambda^2)s}\\
&=&\E\Big[\exp\{i\mu(\xi_t^0-\xi_s^0)+i\lambda\xi_s^0\} \Big]
\end{eqnarray*}\end{proof}
We are now able to end the proof of Theorem \ref{conv1} (item 2). We may apply, 
without any change, the measure tension criterium used in the proof of convergence 
of $(Z^\Delta_t)$ in the case $\alpha_0=\beta_0=1$ (see the end of Section \ref{sectionpreuve1}).
This, and Proposition \ref{multimarginales} show that $(\xi_t^\Delta)_{t\ge 0}$ converges 
in distribution as $\Delta_x\to 0$ to the Brownian motion with drift $(\xi_t^0)_{t\ge 0}$. 
\subsection{Proof of Proposition \ref{cas_limit}}
\label{scas_limit}
We suppose $\alpha_0=\beta_0=1$, $c_1=c_0<0$ and $r\Delta_t=\Delta_x^3$ where $r>0$.\\
We briefly sketch the proof of Proposition \ref{cas_limit}. 
The approach is similar to the one developed in the case 2) of Theorem \ref{conv1}. 
We only prove that $\tilde{Z}^\Delta_t$ converges to the Gaussian distribution with 
$0$-mean and variance equals $-rc_0 t$. Using Theorem 3 in \cite{curtiss42}, 
it is equivalent to show
\[
\lim_{\Delta_x\to 0}\E_{-1}\Big[e^{-\mu \tilde{Z}_t^\Delta} \Big]=
e^{\frac{-rc_0 t \mu^2}{2}},\quad \forall \mu\in\Rset.
\]
We have already observed that we may reduce to the case $t/\Delta_t\in\Nset$; 
in this case we have $\tilde{Z}_t^\Delta=Z_t^\Delta$ and
\[
\E_{-1}\Big[ e^{-\mu Z_t^\Delta} \Big]=\Phi\Big(e^{-\mu\Delta_x},\frac{t}{\Delta_t}\Big)
\]
where $\Phi(\lambda,t)$ is the moment generating function associated with $(X_t)$ 
(see the beginning of subsection \ref{mgf}). Recall that $\Phi(\lambda,t)$ is given 
by identity \eqref{puiss}.\\
Note that:
\[
\alpha=\alpha_0+c_0\Delta_x=1+c_0\Delta_x,\quad \beta=\beta_0+c_0\Delta_x=1+c_0\Delta_x.
\]
Since $\alpha$ and $\beta$ have to belong to $[0,1]$, this implies that $c_0<0$.
Recall that $\mathcal{D}$, $\theta_+$ and $\theta_-$ are the real numbers which 
have been defined by \eqref{5A} resp. \eqref{B1} (with $\lambda=e^{-\mu\Delta_x}$). 
We have:
\[
\mathcal{D}=4c_0^2\Delta_x^{2}\cosh^2(\mu\Delta_x)+4(1+2c_0\Delta_x),
\]
\[
\theta_\pm=-c_0\Delta_x\cosh(\mu\Delta_x)\pm\sqrt{c_0^2\Delta_x^{2}\cosh^2(\mu\Delta_x)
+1+2c_0\Delta_x.
}
\]
Using classical analysis we get:
\begin{eqnarray*}
\sqrt{\mathcal{D}}/2&=&\sqrt{1+2c_0\Delta_x+c_0^2\Delta_x^2
+o(\Delta_x^3)}=1+c_0\Delta_x+o(\Delta_x^3),
\end{eqnarray*}
\begin{eqnarray*}
\theta_+
=1-\frac{c_0\mu^2}{2}\Delta_x^3+o(\Delta_x^3),\quad \theta_-=-1-2c_0\Delta_x+o(\Delta_x).
\end{eqnarray*}
\[
\lim_{\Delta_x\to 0}a_+=1,\quad\lim_{\Delta_x\to 0}\theta_+^{t/\Delta_t}=
\lim_{\Delta_x\to 0}\exp\Big\{-\frac{t}{\Delta_t}\,
\frac{c_0\mu^2}{2}\Delta_x^3\Big\}=\exp\Big\{-c_0r\frac{\mu^2}{2}\,t\Big\},
\]
\[
\lim_{\Delta_x\to 0}a_-=0,\quad 
\lim_{\Delta_x\to 0}\vert\theta_-\vert^{t/\Delta_t}=\lim_{\Delta_x\to 0}
\exp\Big\{\frac{t}{\Delta_t}\, 2c_0\Delta_x\Big\}=\lim_{\Delta_x\to 0}
\exp\Big\{\frac{2c_0 rt}{\Delta_x^2}\Big\}=0\quad (c_0<0).
\]

Relation \eqref{puiss} implies that the variable $Z^\Delta_t$ is 
asymptotically normal distributed with variance $-rc_0t$.
{\color{blue}
\ifthenelse{\boolean{complet}}{
\subsection{Further convergence results}
Since we have developed some asymptotic results concerning the moment generating 
function of the normalized persistent walk in the situation $\rho_0\neq 1$, we 
can obtain some supplement informations for the neighbour situation $\rho_0=1$. 
The moment generating function permits to describe the law of $(Z^0_t,\ t\ge 0)$, 
the generalized integrated telegraph noise. This law was already presented in 
Proposition \ref{loi cond} and Proposition \ref{bessel} but only for ITN that 
is $c_0=c_1$ (see the definition of the transition probabilities in the beginning 
of section \ref{results}).
\begin{prop}
\label{cas_weiss} Let $\bar{\delta}=c_0\Delta_x$,
$\hat{\delta}=c_1\Delta_x$, $\alpha_0=\beta_0=0$ and
$r\Delta_t=\Delta_x$.\\ Under the initial condition $Y_0=X_0=-1$, the Laplace 
transform of $Z^\Delta_t$, denoted by $\Phi(\mu,t)$, converges, as $\Delta_x$ 
tends to $0$ towards the following Laplace transform
\begin{eqnarray}\label{transf}
f(\mu,t)&=&\Big(\frac{1}{2}-\frac{2\mu-c}{4\xi}\Big)e^{rt(-\xi-c/2)}
+\Big(\frac{1}{2}+\frac{2\mu+c}{4\xi}\Big)e^{rt(\xi-c/2)}\nonumber\\
&=&e^{-rtc/2}\Big\{\cosh(\xi
rt)+\frac{2\mu+c}{2\xi}\sinh(\xi rt) \Big\}
\end{eqnarray}
where $c=c_0+c_1$, $\overline{c}=c_1-c_0$ and
$\xi=\sqrt{\mu\overline{c}+\mu^2+\frac{c^2}{4}}$.
\end{prop}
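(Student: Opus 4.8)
The plan is to read the result off the closed form of the moment generating function of $X_t$ established in Proposition~\ref{a+a-1}. Recall that under $X_0=Y_0=-1$ one has $\E_{-1}[\lambda^{X_t}]=a_+\theta_+^{\,t}+a_-\theta_-^{\,t}$, where $\theta_\pm=\tfrac12\big(\tfrac{1-\alpha}{\lambda}+(1-\beta)\lambda\pm\sqrt{\mathcal D}\big)$, $\mathcal D$ is the discriminant \eqref{5A}, and $a_\pm$ are the coefficients given there. The first step is to substitute $\lambda=e^{-\mu\Delta_x}$, $\alpha=\alpha_0+c_0\Delta_x=c_0\Delta_x$, $\beta=\beta_0+c_1\Delta_x=c_1\Delta_x$, and $t\mapsto t/\Delta_t=rt/\Delta_x$, so that $\Phi(\mu,t)=\E_{-1}[e^{-\mu Z^\Delta_t}]=a_+\theta_+^{\,rt/\Delta_x}+a_-\theta_-^{\,rt/\Delta_x}$ whenever $t/\Delta_t\in\Nset$; the non-integer case then follows, exactly as in \eqref{5.31b}, from the estimate $|\Phi(\mu,t)-\Phi(\mu,t')|\le C\Delta_x\Delta_t$ with $t'=\lfloor t/\Delta_t\rfloor\Delta_t$. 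It remains to pass to the limit $\Delta_x\to0$ in $a_+\theta_+^{\,rt/\Delta_x}+a_-\theta_-^{\,rt/\Delta_x}$.

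Next I would carry out the asymptotic expansions. The feature distinguishing this regime from the case $\rho_0\neq1$ treated in Proposition~\ref{conv11} is that $\rho_0=1$ forces $1-\alpha-\beta=1-c\Delta_x$, so the constant term of $\mathcal D$ cancels exactly: expanding $\tfrac{1-\alpha}{\lambda}+(1-\beta)\lambda=2-c\Delta_x+(\mu^2+\bar c\mu)\Delta_x^2+o(\Delta_x^2)$ one gets $\mathcal D=4\xi^2\Delta_x^2+o(\Delta_x^2)$, with $\xi^2=\mu^2+\bar c\mu+\tfrac{c^2}{4}=(\mu+\tfrac{\bar c}{2})^2+c_0c_1>0$, which is why $\sqrt{\mathcal D}$ is real. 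Hence $\sqrt{\mathcal D}=2\xi\Delta_x+o(\Delta_x)$ and $\theta_\pm=1+\Delta_x(\pm\xi-\tfrac c2)+o(\Delta_x)$ — so \emph{both} roots tend to $1$, which is why both exponentials will survive — and, the leading order in the numerator of $a_+$ cancelling as well, a first-order expansion $(1-\alpha)e^{2\mu\Delta_x}+\alpha-\theta_-e^{\mu\Delta_x}=\Delta_x(\mu+\xi+\tfrac c2)+o(\Delta_x)$ gives $a_+\to\tfrac12+\tfrac{2\mu+c}{4\xi}$ and hence $a_-=e^{\mu\Delta_x}-a_+\to\tfrac12-\tfrac{2\mu+c}{4\xi}$.

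Finally I would raise the roots to the power $rt/\Delta_x$: since $\theta_\pm\to1$, $\log\theta_\pm$ is defined for small $\Delta_x$ and $\tfrac{rt}{\Delta_x}\log\theta_\pm=rt(\pm\xi-\tfrac c2)+o(1)$, so $\theta_\pm^{\,rt/\Delta_x}\to e^{rt(\pm\xi-c/2)}$. Combining with the limits of $a_\pm$ yields $\lim_{\Delta_x\to0}\Phi(\mu,t)=(\tfrac12+\tfrac{2\mu+c}{4\xi})e^{rt(\xi-c/2)}+(\tfrac12-\tfrac{2\mu+c}{4\xi})e^{rt(-\xi-c/2)}=e^{-rtc/2}\{\cosh(\xi rt)+\tfrac{2\mu+c}{2\xi}\sinh(\xi rt)\}=f(\mu,t)$, as in \eqref{transf}. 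There is no real obstacle here beyond careful bookkeeping: one must keep terms through order $\Delta_x$ in $\theta_\pm$ (enough, since they get multiplied by $rt/\Delta_x$ in the exponent) and through order $\Delta_x$ in both the numerator and the denominator of $a_\pm$ (needed because the leading orders cancel), and then check that the remainders are genuinely $o(1)$ after exponentiation. The only conceptual point is the degeneracy of $\mathcal D$ at $\rho_0=1$, where $\theta_+$ and $\theta_-$ collide at $1$; this is precisely what produces the two exponentials in \eqref{transf}, in contrast with the single Gaussian limit obtained when $\rho_0\neq1$.
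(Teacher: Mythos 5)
Your proof is correct and follows essentially the same route as the paper's: substitute $\lambda=e^{-\mu\Delta_x}$, $\alpha=c_0\Delta_x$, $\beta=c_1\Delta_x$ into Proposition~\ref{a+a-1}, observe that the constant term of $\mathcal D$ cancels so that $\mathcal D=4\xi^2\Delta_x^2+o(\Delta_x^2)$, expand $\theta_\pm=1+\Delta_x(\pm\xi-\tfrac c2)+o(\Delta_x)$ and $a_+\to\tfrac12+\tfrac{2\mu+c}{4\xi}$, and exponentiate. Your limit $a_-\to\tfrac12-\tfrac{2\mu+c}{4\xi}$ agrees with the $\cosh/\sinh$ form of \eqref{transf} (the first displayed line of the statement carries a sign typo, $2\mu-c$ for $2\mu+c$), and your added remarks on the non-integer $t/\Delta_t$ case and on $\xi^2=(\mu+\bar c/2)^2+c_0c_1>0$ are correct refinements of the paper's argument.
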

\begin{proof} In order to prove the convergence of the Laplace transform
 $\Phi(\mu,t)=\E_{-1}[e^{-\mu Z_t}]$, we use the following decomposition
$\Phi(\mu,t)=a_-\theta_-^{t/\Delta_t}+a_+\theta_+^{t/\Delta_t}$ where
$\theta_-$ and $\theta_+$ are the two roots of the polynomial
\eqref{polybis}. Its discriminant is equal to
\[
\mathcal{D}=R^2-4(1-c\Delta_x)\quad\mbox{with}\quad
R=2-c\Delta_x+\Delta_x^2(\mu\overline{c}+\mu^2)+o(\Delta_x^2)
\]
where $c=c_0+c_1$ and $\overline{c}=c_1-c_0$. Hence, by
\eqref{delta}
\[
\mathcal{D}=4-4c\Delta_x+\Delta_x^2(4\mu\overline{c}+4\mu^2+c^2)-4+4c\Delta_x
+o(\Delta_x^2)=\Delta_x^2(4\mu\overline{c}+4\mu^2+c^2)+o(\Delta_x^2)
\]
The asymptotic developments of the roots are given by
\[
\theta_\pm=\frac{R}{2}\pm\frac{\sqrt{\mathcal{D}}}{2}=1-\frac{c\Delta_x}{2}
\pm\Delta_x\sqrt{\mu\overline{c}+\mu^2+\frac{c^2}{4}}+o(\Delta_x).
\]
This implies
\[
\log\theta_\pm=\Delta_x\Big(\pm\sqrt{\mu\overline{c}+\mu^2+\frac{c^2}{4}}
-\frac{c}{2}\Big)+o(\Delta_x).
\]
In particular, if $\overline{c}=0$ then $c=2c_0$ and
$\theta_\pm=1-c_0\Delta_x\pm\Delta_x\sqrt{\mu^2+c_0^2}+o(\Delta_x)$
and \[
\log\theta_\pm=\Delta_x\Big(\pm\sqrt{\mu^2+c_0^2}-c_0\Big)+o(\Delta_x).
\]
Let us present the development of the prefactor $a_-$ and $a_+$.
We introduce
$\xi=\sqrt{\mu\overline{c}+\mu^2+\frac{c^2}{4}}$. By Lemma
\ref{a+a-1}, under the initial condition $X_0=Y_0=-1$, we get
\begin{eqnarray*}
a_+&=&\frac{(1+2\mu\Delta_x)(1-c_0\Delta_x)+c_0\Delta_x
-(1-\frac{c\Delta_x}{2}-\Delta_x\xi)(1+2\mu\Delta_x)
+o(\Delta_x)}{2\xi\Delta_x+o(\Delta_x)}\\
&=&\frac{c/2+\xi+\mu}{2\xi}+o(\Delta_x)
\end{eqnarray*}
Hence
\[
a_-=1-a_++o(\Delta_x)=\frac{-c/2+\xi-\mu}{2\xi}+o(\Delta_x).
\]
Set $r\Delta_t=\Delta_x$. By letting
$\Delta_x$ tend to $0$, the Laplace transform of $Z_t$ converges 
towards the function $f(\mu,t)$ defined by
\[
f(\mu,t)=\Big(\frac{1}{2}-\frac{2\mu+c}{4\xi}\Big)e^{rt(-\xi-c/2)}
+\Big(\frac{1}{2}+\frac{2\mu+c}{4\xi}\Big)e^{rt(\xi-c/2)}
\]
Other formulation:
\[
f(\mu,t)=e^{-rtc/2}\Big\{\cosh(\xi
rt)+\frac{2\mu+c}{2\xi}\sinh(\xi rt) \Big\}
\]
\end{proof}
\begin{rem}\label{remark} The Laplace transform with respect to 
the time variable can be explicitly computed. We define $F(\mu,s)=\int_0^\infty
e^{-st}f(\mu,t)dt$. In the particular case
$\overline{c}=c_1-c_0=0$, its expression is simplified
\begin{equation}\label{doubletransf}
F(\mu,s)=\frac{s+2rc_0+\mu r}{s^2+2r c_0 s- r^2\mu^2}
\end{equation}
which is to compare with the (double) transform of the solution for the Telegraph equation.\\
Indeed, by \eqref{transf}
\begin{eqnarray*}
F(\mu,s)&=&\Big(\frac{1}{2}-\frac{\mu+c_0}{2\xi} \Big)
\frac{1}{s+r\xi+r c_0}+\Big(\frac{1}{2}+\frac{\mu+c_0}{2\xi} \Big)\frac{1}{s-r\xi+r c_0}\\
&=&\frac{(\xi-\mu-c_0)(s-r\xi+r c_0)+(\xi+\mu+c_0)(s+r\xi+r c_0)}{2\xi((s+rc_0)^2-r^2\xi^2)}\\
&=&\frac{2\xi s+2\xi r c_0+2\mu r\xi+2c_0
r\xi}{2\xi((s+rc_0)^2-r^2\xi^2)}=\frac{ s+ 2r c_0+\mu
r}{(s+rc_0)^2-r^2\xi^2}
\end{eqnarray*}
Since $\xi^2=\mu^2+c_0^2$, we obtain
\[
F(\mu,s)=\frac{s+ 2r c_0+\mu r}{s^2+2src_0-r^2\mu^2}.
\]
\end{rem}
}{}}
\begin{small}
 \bibliographystyle{plain}
 \bibliography{biblio} 
\end{small}
\end{document}